\documentclass[11pt,reqno,twoside]{amsart}
\usepackage{graphicx}
\usepackage{amssymb}
\usepackage{epstopdf}
\usepackage[asymmetric,top=3.5cm,bottom=4.3cm,left=3.1cm,right=3.1cm]{geometry}
\geometry{a4paper}

\usepackage{booktabs} 
\usepackage{array} 
\usepackage{paralist} 
\usepackage{verbatim} 
\usepackage{subfig} 
\usepackage{tabularx}
\usepackage{amsmath,amsfonts,amsthm,mathrsfs,amssymb,cite}
\usepackage[usenames]{color}
\usepackage{bm}

\newtheorem{thm}{Theorem}[section]
\newtheorem{cor}[thm]{Corollary}
\newtheorem{lem}{Lemma}[section]
\newtheorem{prop}{Proposition}[section]
\theoremstyle{definition}
\newtheorem{defn}{Definition}[section]
\theoremstyle{remark}

\newtheorem{rem}{Remark}[section]
\numberwithin{equation}{section}

\newcommand{\calA}{\mathcal{A}}
\newcommand{\calS}{\mathcal{S}}
\newcommand{\calK}{\mathcal{K}}
\newcommand{\calM}{\mathcal{M}}
\newcommand{\calL}{\mathcal{L}}
\newcommand{\bx}{\mathbf{x}}
\newcommand{\by}{\mathbf{y}}
\newcommand{\bE}{\mathbf{E}}
\newcommand{\bH}{\mathbf{H}}
\newcommand{\bI}{\mathbf{I}}
\newcommand{\bT}{\mathbf{T}}
\newcommand{\bN}{\mathbf{N}}
\newcommand{\bK}{\mathbf{K}}
\newcommand{\bvarphi}{\bm{\varphi}}
\newcommand{\bpsi}{\bm{\psi}}
\newcommand{\bnu}{\bm{\nu}}
\newcommand{\bXi}{\bm{\Xi}}
\newcommand{\rmi}{\mathrm{i}}
\newcommand{\be}{\mathbf{e}}

\title[Surface plasmon resonance and invisibility cloaking]{Analysis of electromagnetic scattering from plasmonic inclusions beyond the quasi-static approximation and applications}

\author{Hongjie Li}
\address{Department of Mathematics, Hong Kong Baptist University, Kowloon Tong, Hong Kong SAR, China.}
\email{hongjie$_{-}$li@yeah.net}

\author{Shanqiang Li}
\address{Department of Mathematics, Harbin University of Science and Technology, Harbin, China.}
\email{lishanqiang@gmail.com}

\author{Hongyu Liu}
\address{Department of Mathematics, Hong Kong Baptist University, Kowloon Tong, Hong Kong SAR, China.}
\email{hongyu.liuip@gmail.com; hongyuliu@hkbu.edu.hk}

\author{Xianchao Wang}
\address{Department of Mathematics, Harbin Institute of Technology, Harbin, China}
\email{xcwang90@gmail.com}

\begin{document}
\maketitle

\begin{abstract}

This paper is concerned with the analysis of time-harmonic electromagnetic scattering from plasmonic inclusions in the finite frequency regime beyond the quasi-static approximation. The electric permittivity and magnetic permeability in the inclusions are allowed to be negative-valued. Using layer potential techniques for the full Maxwell system, the scattering problem is reformulated into a system of integral equations. We derive the complete eigensystem of the involved matrix-valued integral operator within spherical geometry. As applications, we construct two types of plasmonic structures such that one can induce surface plasmon resonances within finite frequencies and the other one can produce invisibility cloaking. It is particularly noted that the cloaking effect is a newly found phenomenon and is of different nature from those existing ones for plasmonic structures in the literature. The surface plasmon resonance result may find applications in electromagnetic imaging. 

\medskip

\medskip

\noindent{\bf Keywords:}~~ plasmonic inclusions, electromagnetic scattering, surface plasmon resonances, cloaking, finite frequencies, beyond quasi-static limit \\
\noindent{\bf 2010 Mathematics Subject Classification:}~~Primary~78A45, 78A48; Secondary~ 35Q61, 35P10

\end{abstract}

\section{Introduction}

Plasmonic materials are artificially engineered metamaterials that exhibit negative or left-handed optical properties. V. G. Veselago was the first to theoretically investigate the electrodynamics of substances with simultaneously negative permittivity $\varepsilon$ and magnetic permeability $\mu$ \cite{Ves}. J. B. Pendry extended the work of V. G. Veselago and gave the theoretical construction of a perfect lens \cite{JP}. Negative refractive indexed materials have been fabricated in the lab \cite{Le}. Plasmonic materials have been proposed for many striking applications including biomedicine imaging \cite{ADM, APRY18, JLEE06}, near-field microscopy \cite{BL2002,BLBCL2006}, molecular recognition \cite{HF04}, heat therapeutic applications \cite{ARRu18, BGQu10, FDLi15} and invisibility cloaking \cite{Alu,GWM3,GWM5,La}. 

The mathematical study of plasmonic materials is interesting and has attracted significant attentions in recent years in the literature. The negativity of the material parameters breaks the ellipticity of the governing wave systems, and hence various resonance phenomena can be induced. At the resonant frequencies, the incident light couples with the surface plasmons to create a significant field enhancement around the plasmonic inclusions which can be much shorter in wavelength and known as the surface plasmon polaritons. The surface plasmon resonance can be used to enhance the wave imaging and it has been investigated for electromagnetic waves in \cite{ADM} for plasmonic nanoparticles, that is, for electromagnetic waves in the quasi-static regime. For a certain plasmonic structures of the core-shell form, the so-called anomalous localized resonance and invisibility cloaking effect can be induced; see \cite{Acm13,Ack13,Ack14,Klsap,AKL,LLL} for the related study in electrostatics and \cite{AKKY, AKM1, KM, DLL, LiLiu2d, LiLiu3d, LLL2} in elastostatics. 
In all of the aforementioned mathematical studies, the quasistatic approximation for the wave scattering problems is technically made. In \cite{KLO}, the authors show that without the quasistatic approximation, plasmonic resonance does not occur for the Helmholtz system if the plasmon parameter follows a specific trajectory in the complex plane towards the singular state as the loss parameter goes to zero. In a recent paper by two of the authors of the present article \cite{LiLiuhelm}, it is constructed that if the plasmon parameter follows certain designated trajectories in the complex plane toward the singular states, then plasmonic resonances can still occur for the Helmholtz system in the finite-frequency regime beyond the quasi-static approximation.

In this article, inspired by the work \cite{LiLiuhelm}, we analyze the electromagnetic scattering from plasmon inclusions at finite frequencies beyond the quasi-static approximation. By using the layer potential techniques, we reduce the electromagnetic scattering problem to a system of boundary integral equations. Then we derive the complete eigensystem of the involved matrix-valued boundary integral operator within spherical geometry. The integral operator is non-selfadjoint without the quasi-static assumption and the derivation of the spectral system is rather delicate and subtle, even within the spherical geometry. Moreover, in the current study, one needs to know the exact spectral information of the integral operator and hence the extension to general geometries is unpractical. As applications, we give two general constructions of plasmonic structures such that one can induce surface plasmon resonances and the other one can produce invisibility cloaking. We also provide concrete examples within the Drude model for the Maxwell system. Similar to \cite{ADM} in the quasi-static case, our result on surface plasmon resonance can find applications in electromagnetic wave imaging in the finite regime. It is worth of emphasizing that the cloaking effect discovered in our study is of different nature from the earlier mentioned ones induced by the anomalous localized resonance, and it critically depends on the frequencies of the electromagnetic waves. Furthermore, our numerical results show that not only the plasmonic structures, but also any inhomogeneous inclusions embedded in the structures can be cloaked.

Next, we present the mathematical setup of our study by describing the electromagnetic scattering problem and the time-harmonic Maxwell system. Let $\epsilon_m$ and $\mu_m$ be two positive constants which, respectively, characterize the electric permittivity and magnetic permeability of the uniformly homogeneous space $\mathbb{R}^3$. Let $D$ be a bounded domain in $\mathbb{R}^3$ with a smooth boundary $\partial D$ and a connected complement $\mathbb{R}^3\backslash\overline{D}$, which signifies the plamsonic inclusion embedded in the homogeneous space. The permittivity and permeability of $D$ are specified by $\epsilon_c$ and $\mu_c$. At this point, we only assume that $\epsilon_c$ and $\mu_c$ are complex numbers with nonnegative imaginary parts, that is, $\epsilon_c, \mu_c\in\mathbb{C}$ and $\Im \epsilon_c, \Im \mu_c\geq 0$. Eventually, we shall construct that $\epsilon_c$ and $\mu_c$ might be certain complex numbers with negative real parts. Let $\omega\in\mathbb{R}_+$ denote the frequency of the electromagnetic waves. Set
\begin{equation}\label{eq:a1}
 k_m=\omega\sqrt{\epsilon_m\mu_m}, \quad k_c=\omega\sqrt{\epsilon_c\mu_c}\quad \mbox{with} \quad \Re k_c>0, \  \Im k_c<0,
\end{equation}
and
\begin{equation}\label{eq:a2}
 \epsilon_D=\epsilon_m \chi(\mathbb{R}^3\backslash \overline{D})+\epsilon_c\chi(D), \quad \mu_D=\mu_m \chi(\mathbb{R}^3\backslash \overline{D})+\mu_c\chi(D),
\end{equation}
where and also in what follows $\chi$ denotes the characteristic function. Let $(\bE^i, \bH^i)$ be the incident electromagnetic field, and it satisfies the following Maxwell system in the homogeneous space,
 \begin{equation}\label{eq:a3}
  \begin{split}
     \nabla\times \bE^i-\mathrm{i}\omega\mu_m \bH^i=& 0 \qquad \mbox{in} \ \ \mathbb{R}^3,\medskip \\
     \nabla\times \bH^i+\mathrm{i}\omega\epsilon_m \bE^i =& 0\qquad \mbox{in} \ \ \mathbb{R}^3,
  \end{split}
 \end{equation}
 where $\mathrm{i}:=\sqrt{-1}$ is the imaginary unit. The electromagnetic scattering associated with the medium configuration described in \eqref{eq:a2} due to the incident field $(\bE^i, \bH^i)$ in \eqref{eq:a3} is governed by the following transmission problem of the Maxwell system,
\begin{equation}\label{eq:original_max}
 \begin{split}
      \nabla\times \bE-\mathrm{i}\omega\mu_D \bH=& 0 \qquad \mbox{in} \ \ \mathbb{R}^3\backslash\partial D, \medskip\\
     \nabla\times \bH+\mathrm{i}\omega\epsilon_D\bE =& 0 \qquad \mbox{in} \ \ \mathbb{R}^3\backslash\partial D, \medskip\\
     \bnu\times \bE|_+-\bnu\times \bE|_-  =&  \bnu\times \bH|_+-\bnu\times \bH|_-=0 \quad \mbox{on} \ \ \partial D,
 \end{split}
\end{equation}
subject to the Silver-M\"{u}ler radiation condition,
\begin{equation}\label{eq:a4}
 \lim_{|\bx|\rightarrow\infty} |\bx|\left(\sqrt{\mu_m}(\bH-\bH^i)(\bx)\times\frac{\bx}{|\bx|} - \sqrt{\epsilon_m}(\bE-\bE^i)(\bx)\right)
\end{equation}
which holds uniformly in $\bx/|\bx|\in\mathbb{S}^2$. The radiation condition \eqref{eq:a4} characterizes the outgoing nature of the scattered wave fields $\bE^s:=\bE-\bE^i$ and $\bH^s:=\bH-\bH^i$. In this paper, we are concerned with studying the quantitative behaviours of the scattered fields $\bE^s, \bH^s$ and their relationships with the proper choices of the plasmonic parameters $\epsilon_c, \mu_c$ and the incident fields $\bE^i, \bH^i$. Specifically, as discussed earlier, we are mainly interested in the two phenomena of surface plasmon resonance and invisibility cloaking. It is emphasized that we do not impose the quasi-static assumption in our study, namely, there is no such assumption that $\omega\cdot\mathrm{diam}(D)\ll 1$, and as also discussed earlier, this is one aspect that distinguishes the current study from the existing ones in the literature. 

The rest of the paper is organized as follows. In Section 2, we present the integral reformulation of the electromagnetic scattering problem using layer potential techniques. Section 3 is devoted to the spectral analysis of the integral operators derived in Section 2. In Section 4, we consider the surface plasmon resonance results and in Section 5, we consider the invisibility results. 

\section{Integral formulation of the Maxwell system}

In this section, using the layer potential techniques, we present the integral formulation of the electromagnetic scattering problem \eqref{eq:a1}--\eqref{eq:a4}. We begin with the introduction of some Sobolev spaces and potential theory for the Maxwell system for the subsequent use.

Let $D$ be a bounded domain in $\mathbb{R}^3$ with a smooth boundary $\partial D$ and a connected complement $\mathbb{R}^3\backslash\overline{D}$. We use $\nabla_{\partial D}$, $\nabla_{\partial D}\cdot$ and $\triangle_{\partial D}$ to respectively signify the surface gradient, surface divergence and Laplace-Beltrami operator on the surface $\partial D$. Denote by
\[
  L^2_T(\partial D):=\{\bvarphi\in L^2(\partial D)^3, \bnu\cdot\bvarphi=0\},
\]
where and also in what follows $\bnu$ signifies the exterior unit normal vector to the boundary of a domain concerned. Let $H^s(\partial D)$ be the usual Sobolev space of order $s\in\mathbb{R}$ on $\partial D$. Introduce the following function spaces
\[
  TH(\mathrm{div},\partial D):=\{\bvarphi\in L^2_{T}(\partial D):\nabla_{\partial D}\cdot \bvarphi\in L^2(\partial D)\},
\]
and
\[
 TH(\mathrm{curl},\partial D):=\{\bvarphi\in L^2_{T}(\partial D):\nabla_{\partial D}\cdot (\bvarphi\times\bnu)\in L^2(\partial D) \}.
\]
Define the vectorial surface curl by
\[
  \vec{\mathrm{curl}}_{\partial D}\varphi=\nabla_{\partial D}\varphi\times \bnu,
\]
for $\varphi\in L^2(\partial D)$ and one can verify  that
\begin{equation}\label{eq:lap}
  \nabla_{\partial D}\cdot\nabla_{\partial D}=\triangle_{\partial D}\quad\mbox{and}\quad
 \nabla_{\partial D}\cdot \vec{\mathrm{curl}}_{\partial D}=0.
\end{equation}
Next, we recall that the fundamental outgoing solution $G(\bx,\by,k)$ to the Helmholtz operator $\triangle + k^2$ in $\mathbb{R}^3$ is given by
\begin{equation}\label{eq:funda_sol}
 G(\bx,\by,k)=-\frac{e^{\rmi k|\bx-\by|}}{4\pi|\bx-\by|},\quad\bx, \by\in\mathbb{R}^3,\ \ \bx\neq\by. 
\end{equation}
For a density $\bvarphi\in TH(\mathrm{div},\partial D)$, we introduce the following vectorial single layer potential operator,
\begin{equation}\label{eq:vec_single}
  \calA_D^k[\bvarphi](\bx):=\int_{\partial D} G(\bx,\by,k)\bvarphi(\by)ds(\by), \quad \bx\in\mathbb{R}^3.
\end{equation}
For a scalar density $\varphi\in L^2(\partial D)$, the single layer potential operator is defined similarly by
\begin{equation}
  \calS_D^k[\varphi](\bx):=\int_{\partial D} G(\bx,\by,k) \varphi(\by)ds(\by), \quad \bx\in\mathbb{R}^3.
\end{equation}
We shall also need the following boundary integral operators,
\begin{equation}
\begin{split}
  \widetilde{\calS}_D^k[\varphi]: L^2(\partial D) & \rightarrow H^1(\partial D)  \\
      \varphi & \mapsto \widetilde{\calS}_D^k[\varphi](\bx)=\int_{\partial D} G(\bx,\by,k) \varphi(\by)ds(\by), \quad \bx\in\partial D;\\
  \left(\calK_D^{k}\right)^*[\varphi]: L^2(\partial D) & \rightarrow L^2(\partial D)  \\
      \varphi & \mapsto \left(\calK_D^{k}\right)^*[\varphi](\bx)=\int_{\partial D} \frac{\partial G(\bx,\by,k)}{\partial \bnu_{\bx}}\varphi(\by)ds(\by), \quad \bx\in\partial D;\\
  \left(\bK_D^{k}\right)^*[\bvarphi]: L^2(\partial D)^3 & \rightarrow L^2(\partial D)^3  \\
      \bvarphi & \mapsto \left(\bK_D^{k}\right)^*[\bvarphi](\bx)=\int_{\partial D} \frac{\partial G(\bx,\by,k)}{\partial \bnu_{\bx}}\bvarphi(\by)ds(\by), \quad \bx\in\partial D;\\
  \widetilde{\calA}_D^k[\bvarphi]: L^2(\partial D)^3 & \rightarrow H^1(\partial D)^3  \\
      \bvarphi & \mapsto \widetilde{\calA}_D^k[\bvarphi](\bx)=\int_{\partial D} G(\bx,\by,k) \bvarphi(\by)ds(\by), \quad \bx\in\partial D;
\end{split}
\end{equation}
and
\begin{equation}\label{eq:def_m}
\begin{split}
  \calM_D^{k}[\bvarphi]: & TH(\mathrm{div},\partial D) \rightarrow TH(\mathrm{div},\partial D)  \\
      \bvarphi & \mapsto \calM_D^{k}[\bvarphi](\bx)=\int_{\partial D} \bnu_{\bx}\times\nabla_{\bx}\times G(\bx,\by,k)\bvarphi(\by) ds(\by), \quad \bx\in\partial D;
\end{split}
\end{equation}
\begin{equation}\label{eq:def_l}
\begin{split}
  \calL_D^{k}[\bvarphi]: & TH(\mathrm{div},\partial D) \rightarrow TH(\mathrm{div},\partial D)  \\
      \bvarphi & \mapsto \calL_D^{k}[\bvarphi](\bx)=\bnu_{\bx}\times\left(k^2 \widetilde{\calA}^k_D[\bvarphi](\bx) + \nabla\widetilde{\calS}_D^k[\nabla_{\partial D}\cdot \bvarphi] \right), \quad \bx\in\partial D.
\end{split}
\end{equation}
There holds the following jump formula for $\bvarphi\in L^2_{T}(\partial D)$ (cf. \cite{Jcn}),
\begin{equation}\label{eq:jump}
  \left( \bnu\times\nabla\times\calA_D^k \right)^{\pm}=\mp\frac{\bvarphi}{2} + \calM_D^k[\bvarphi] \quad \mbox{on} \quad \partial D,
\end{equation}
where
\[
 \left( \bnu_{\bx}\times\nabla\times\calA_D^k \right)^{\pm}(\bx)=\lim_{t\rightarrow 0^+} \bnu_{\bx}\times\nabla\times\calA_D^k[\bvarphi](\bx\pm t\bnu_{\bx}), \quad \forall\bx\in\partial D.
\]
In what follows, similar to \eqref{eq:jump}, the subscripts $\pm$ signify the limits from the outside and inside of $D$, respectively.

With the above preparations, we are in a position to derive the integral formulation of the electromagnetic scattering problem \eqref{eq:a1}--\eqref{eq:a4}. Using the vectorial single layer potential operator  \eqref{eq:vec_single}, the solution to \eqref{eq:a1}--\eqref{eq:a4} can be be given by the following integral ansatz (cf. \cite{ADM}),
\begin{equation}\label{eq:layer_solution}
  \bE(\bx)=\left\{
             \begin{array}{ll}
               \bE^i(\bx)+\mu_m\nabla\times\calA_D^{k_m}[\bpsi]+\nabla\times\nabla\times\calA_D^{k_m}[\bvarphi](\bx), & \bx\in\mathbb{R}^3\backslash\overline{D},\medskip \\
               \mu_c\nabla\times\calA_D^{k_c}[\bpsi]+\nabla\times\nabla\times\calA_D^{k_c}[\bvarphi](\bx), & \bx\in D,
             \end{array}
           \right.
\end{equation}
and
\begin{equation}
  \bH(\bx)= \frac{1}{\rmi\omega\mu_D}(\nabla\times\bE)(\bx) \quad \bx\in\mathbb{R}^3\backslash\partial D.
\end{equation}
By matching the boundary condition (the third condition in \eqref{eq:original_max}) and using the jump formula \eqref{eq:jump}, the pair $(\bpsi,\bvarphi)\in TH(\mathrm{div},\partial D)\times TH(\mathrm{div},\partial D)$ is the solution to the following system of integral equations,
\begin{equation}\label{eq:operator_eq}
  (\mathbb{I} + \mathbb{K}) \bm{\Phi} = \mathbf{F},
\end{equation}
where
\[
 \begin{split}
   \mathbb{I} &= \left[
               \begin{array}{cc}
                 \frac{\mu_c+\mu_m}{2} I & 0\medskip \\
                 0 & \left( \frac{k_c^2}{2\mu_c} +\frac{k_m^2}{2\mu_m} \right)I \\
               \end{array}
             \right], \\
     \mathbb{K}&=\left[
              \begin{array}{cc}
                \mu_c \calM_D^{k_c} - \mu_m \calM_D^{k_m} & \calL_D^{k_c}-\calL_D^{k_m}\medskip \\
                \calL_D^{k_c}-\calL_D^{k_m} & \frac{k_c^2}{\mu_c} \calM_D^{k_c} - \frac{k_m^2}{\mu_m} \calM_D^{k_m}\\
              \end{array}
            \right],
 \end{split}
\]

\[
 \bm{\Phi}=\left[
             \begin{array}{c}
               \bpsi \medskip\\
               \bvarphi \\
             \end{array}
           \right]
\quad \mbox{and} \quad
 \mathbf{F}=\left[
             \begin{array}{c}
               \bnu\times \bE^i \medskip\\
               \rmi\omega  \bnu\times \bH^i \\
             \end{array}
           \right].
\]
Here and also in what follows, $I$ signifies the identity operator.

Based on the integral formulation \eqref{eq:operator_eq}, we next give the formal definitions on the surface plasmon resonance and invisibility cloaking effect associated with the scattering problem \eqref{eq:a1}--\eqref{eq:a4}. At this point, we suppose that the operator $(\mathbb{I}+\mathbb{K})$ possesses an eigen-system $\{\bXi_j\}_{j=1}^{+\infty}$ and $\{\tau_j\}_{j=1}^{\infty}$ such that 
\[
 (\mathbb{I}+\mathbb{K})[\bXi_j]=\tau_j \bXi_j. 
\]
Moreover, it is assumed that the set of functions $\{\bXi_j\}_{j=1}^{+\infty}$ is complete in the space $L^2_{T}(\partial D)^2$. Definitely, we shall construct such an eigen-system in what follows under a certain circumstance and this is one of the main technical contributions of the current study. Using such an eigen-system, the integral system \eqref{eq:operator_eq} can be solved via the spectral resolution by firstly representing the RHS term  $\mathbf{F}$ in \eqref{eq:operator_eq} as $ \mathbf{F}=\sum_{j=1}^{+\infty}f_j \bXi_j$,
and then by straightforward calculations that
\begin{equation}\label{eq:s1}
 \bm{\Phi}=\sum_{j=1}^{+\infty} \frac{f_j}{\tau_j}\bXi_j.
\end{equation}
Thus from \eqref{eq:layer_solution}, the scattered wave field in $\mathbb{R}^3\backslash\overline{D}$ to the problem \eqref{eq:a1}--\eqref{eq:a4} can be given as follows
\begin{equation}\label{eq:swf}
 \bE^s(\bx)=\sum_{j=1}^{+\infty}\frac{f_j}{\tau_j}\left(\mu_m\nabla\times\calA_D^{k_m}[\bXi_{j,1}]+\nabla\times\nabla\times\calA_D^{k_m}[\bXi_{j,2}](\bx)\right), \quad \bx\in\mathbb{R}^3\backslash\overline{D},
\end{equation}
where
\[
 \bXi_j= \left[
             \begin{array}{c}
               \bXi_{j,1} \\
               \bXi_{j,2} \\
             \end{array}
           \right].
\]
Based on the representation of the scattered wave field $ \bE^s$ in \eqref{eq:swf}, we introduce the following definitions for the phenomena of the surface plasmon resonance and invisibility cloaking.
\begin{defn}\label{def:reson}
We say that the surface plasmon resonance occurs if there exists an eigenvalue $\tau_j$ of the operator $(\mathbb{I}+\mathbb{K})$ such that $|\tau_j|\ll 1$ with $f_j\neq 0$.
\end{defn}
\begin{defn}\label{def:invi}
We say that the object $D$ is invisible to the incident wave $(\bE^i, \bH^i)$ , if all the eigenvalues $\tau_j$ of the operator $(\mathbb{I}+\mathbb{K})$ with $f_j\neq 0$ satisfy
\[
 |\tau_j|\gg1.
\]
\end{defn}

In the situation as described in Definition~\ref{def:reson}, by \eqref{eq:swf}, one readily sees that the corresponding scattered filed encounters a certain blowup. Indeed, in such a case, the significant field enhancement is mainly confined to the surface of the electromagnetic inclusion $D$ and it is referred to as the surface plasmon resonance; see our numerical illustrations in Figs.~1 and 2 in what follows. On the other hand, in the situation as described in Definition~\ref{def:invi}, one can easily verify that the corresponding scattered field should be nearly vanishing and hence invisibility cloaking effect can be observed. Indeed, in our subsequent numerical study, it is observed that not only the plasmonic structure but also certain inhomogeneous inclusions embedded in the structure are invisible under the wave impinging. Clearly, the occurrence of the surface plasmon resonance and the invisibility cloaking critically depends on the appropriate choice of the material constitution of the plasmonic inclusion, namely $(D; \epsilon_c, \mu_c)$ and the source $\mathbf{F}$, namely $(\mathbf{E}^i, \bH^i)$. In what follows, we shall first construct the eigen-system for the operator $(\mathbb{I}+\mathbb{K})$ with the spherical geometry. It is emphasized that the spectral structure of the operator $(\mathbb{I}+\mathbb{K})$ with $\omega=0$, namely in the quasi-static regime, is known \cite{ADM}. In the finite-frequency regime with $\omega\sim 1$, the operator $(\mathbb{I}+\mathbb{K})$ is no longer self-adjoint and the construction is radically more challenging. Moreover, as discussed in Section 1, for the surface plasmon resonance and the invisibility cloaking, one shall require the exact spectral information of the operator $(\mathbb{I}+\mathbb{K})$, and hence we mainly restrict to the spherical geometry. Nevertheless, as can be seen that even in such a case, the corresponding derivation is highly nontrivial. After the derivation of the exact spectral properties of the operator $(\mathbb{I}+\mathbb{K})$, we can then use the corresponding result to construct the suitable plasmonic structures that can induce surface plasmon resonance or invisibility cloaking.

\section{Spectral analysis of the integral operators}

In this section, we derive the spectral properties of the integral operators introduced earlier within the spherical geometry. To that end, we assume that $D$ is a central ball of radius $R\in\mathbb{R}_+$ in what follows. Let $\mathbb{S}$ be the unit sphere and we introduce the following vectorial spherical harmonics of order $n\in\mathbb{N}_0:=\mathbb{N}\cup\{0\}$,
\begin{equation}\label{eq:vec_sphe_harm}
  \begin{split}
    \bI_n =&\ \nabla_{\mathbb{S}}Y_{n+1}+(n+1)Y_{n+1}\bnu, \quad\,  n\geq 0, \\
    \bT_n =&\ \nabla_{\mathbb{S}}Y_{n}\times\bnu, \quad\qquad\qquad\qquad\ \,  n\geq 1,\\
    \bN_n =&\ -\nabla_{\mathbb{S}}Y_{n-1}+ nY_{n-1}\bnu, \qquad\;\;\,  n\geq 1,
  \end{split}
\end{equation}
where $\nabla_{\mathbb{S}}$ is the surface gradient on the unit sphere $\mathbb{S}$ and $Y_n$ is the spherical harmonics. It is noted that the set $(\nabla_{\mathbb{S}}Y_{n},\nabla_{\mathbb{S}}Y_{n}\times\bnu )_{n\in\mathbb{N}}$ forms an orthogonal basis of $L^2_{T}(\mathbb{S})$ (cf. \cite{Jcn}). Let $j_n(t)$ and $h_n^{(1)}(t)$ be, respectively, the spherical Bessel and Hankel functions of order $n\in \mathbb{N}_0$. The following lemma shows the spectral properties of the operator $\left(\calK_D^{k}\right)^*$ and $\widetilde{\calS}_D^k$ (cf.\cite{LiLiuhelm}).
\begin{lem}\label{thm:eigenvalue_k-star}
Suppose that $k$ and $R$ satisfy the following condition
\begin{equation}\label{eq:assumption_kR}
  j_m(kR)\neq j_{m+2}(kR) \quad \mbox{for} \quad m\geq 0.
\end{equation}
Then it holds that
\begin{equation}
  \left(\calK_D^{k}\right)^*[Y_n](x)=\lambda_{n,k,R} Y_n(\hat{x}) \quad \mbox{with} \quad n\geq 0,
\end{equation}
where
\begin{equation}\label{eq:eigenvalue_lambda_m}
  \lambda_{n,k,R}=\frac{1}{2}-\rmi k^2 R^2 j_n^{\prime}(kR) h_n^{(1)}(kR)=-\frac{1}{2}-\rmi k^2 R^2 j_n(kR) h_n^{(1)\prime}(kR).
\end{equation}
On the boundary $\partial D$, we have
\[
   \widetilde{\calS}_D^k[Y_n](x)=\chi_{n,k,R} Y_n(\hat{x}), \quad x\in \partial D,
\]
 where
\[
  \chi_{n,k,R}=-\rmi k R^2 h_n^{(1)}(kR) j_n(kR).
\]
\end{lem}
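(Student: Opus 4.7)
The plan is to substitute the classical spherical-harmonic expansion of the Helmholtz Green's function directly into both boundary operators and then exploit orthonormality of $\{Y_l^m\}$ on the unit sphere to diagonalise everything. With the sign convention of \eqref{eq:funda_sol}, for $|\bx|\neq|\by|$ one has
\[
G(\bx,\by,k) = -\rmi k \sum_{l=0}^{\infty}\sum_{|m|\leq l} j_l(k r_<)\,h_l^{(1)}(k r_>)\,Y_l^m(\hat{\bx})\,\overline{Y_l^m(\hat{\by})},
\]
with $r_<=\min(|\bx|,|\by|)$ and $r_>=\max(|\bx|,|\by|)$. On the sphere of radius $R$, the integral over $\by$ against $Y_n(\hat{\by})$ kills every mode except $l=n$, and the eigenvalue identities will drop out with one line of algebra in each case.

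For $\widetilde{\calS}_D^k[Y_n]$, I would first evaluate $\calS_D^k[Y_n](\bx)$ for $|\bx|>R$, plug in the series with $r_>=|\bx|$, and use the orthogonality of the harmonics in $\by$ on $|\by|=R$. Taking the boundary limit $|\bx|\to R^+$, which is justified because the scalar single layer is continuous across $\partial D$, yields
\[
\widetilde{\calS}_D^k[Y_n](\bx) = -\rmi k R^2 j_n(kR)\,h_n^{(1)}(kR)\,Y_n(\hat{\bx}),
\]
which is exactly $\chi_{n,k,R}Y_n(\hat{\bx})$.

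For $(\calK_D^{k})^*[Y_n]$, I would invoke the standard jump relation $\partial_\nu \calS_D^k[\varphi]\big|_\pm = \bigl(\pm\tfrac{1}{2}I + (\calK_D^k)^*\bigr)[\varphi]$ on $\partial D$ rather than attempting to compute the singular principal value head-on. Differentiating the exterior expansion termwise in the radial variable and taking the limit $|\bx|\to R^+$ gives $\partial_\nu\calS_D^k[Y_n]\big|_+(\bx) = -\rmi k^2R^2 j_n(kR)h_n^{(1)\prime}(kR)Y_n(\hat{\bx})$, so subtracting the $+\tfrac{1}{2}Y_n$ jump produces the second form of $\lambda_{n,k,R}$. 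Performing the analogous computation from inside $D$, where the roles of $r_<$ and $r_>$ swap, yields $\partial_\nu\calS_D^k[Y_n]\big|_-(\bx) = -\rmi k^2R^2 j_n'(kR)h_n^{(1)}(kR)Y_n(\hat{\bx})$, and adding the $-\tfrac{1}{2}Y_n$ jump returns the first form. Equivalence of the two expressions is then an immediate algebraic consequence of the Bessel--Hankel Wronskian $j_n(t)h_n^{(1)\prime}(t) - j_n'(t)h_n^{(1)}(t) = \rmi/t^2$ evaluated at $t=kR$.

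The mildly delicate point in this scheme is the termwise radial differentiation of the single-layer expansion and its one-sided boundary trace: unlike $\widetilde{\calS}_D^k$, the operator $\partial_\nu \calS_D^k$ genuinely jumps across $\partial D$, so one must commit to approaching $|\bx|=R$ from a single side and then account for the $\pm\tfrac12$ jump, rather than manipulating the singular kernel on the sphere directly. Having the two one-sided calculations agree via the Wronskian provides a built-in consistency check, while the non-degeneracy assumption \eqref{eq:assumption_kR} (equivalent, via $2j_{n+1}'(t)=j_n(t)-j_{n+2}(t)$, to $j_{n+1}'(kR)\neq 0$ for all $n\geq 0$) is not needed for the eigenvalue computation itself but enters later when these scalar formulas are used to diagonalise and invert the matrix operator $(\mathbb{I}+\mathbb{K})$.
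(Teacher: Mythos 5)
Your argument is correct and is the standard route for this lemma: expand the outgoing Green's function (with the paper's sign, $G=-\Phi$) in spherical harmonics, use orthogonality on $\partial D$ to read off the diagonal action of $\calS_D^k$, take the continuous boundary trace to get $\chi_{n,k,R}$, and pass to $(\calK_D^k)^*$ through the one-sided jump relation $\partial_\nu\calS_D^k|_\pm=\pm\tfrac12 I+(\calK_D^k)^*$, approaching once from outside and once from inside. The two expressions for $\lambda_{n,k,R}$ then agree by the Wronskian $j_n h_n^{(1)\prime}-j_n' h_n^{(1)}=\rmi/t^2$ exactly as you note. The paper does not give its own proof here (it delegates to the reference cited next to the lemma), but this is the expected computation and all your signs and limits check out, including the static limit $\lambda_{n,k,R}\to 1/(2(2n+1))$ as $k\to 0$.

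One small correction to a peripheral remark: the recurrence $2j_{n+1}'(t)=j_n(t)-j_{n+2}(t)$ holds for cylindrical Bessel functions, not spherical ones. For spherical Bessel functions the correct identity is $(2n+3)\,j_{n+1}'(t)=(n+1)\,j_n(t)-(n+2)\,j_{n+2}(t)$, so the hypothesis \eqref{eq:assumption_kR} is not equivalent to $j_{n+1}'(kR)\neq 0$. This does not affect your proof at all; your broader observation, that the condition plays no role in computing $\lambda_{n,k,R}$ and $\chi_{n,k,R}$ here and only matters when the scalar eigenvalues are subsequently assembled into the matrix operator $\mathbb{I}+\mathbb{K}$, is still correct.
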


Using the vectorial spherical harmonics in \eqref{eq:vec_sphe_harm} and Lemma \ref{thm:eigenvalue_k-star}, one has by direct calculations that
\[
\left\{
  \begin{array}{ll}
    \left(\bK_D^{k}\right)^*[\nabla_{\mathbb{S}}Y_{n}+nY_{n}\bnu] = \lambda_{n-1,k,R}(\nabla_{\mathbb{S}}Y_{n}+nY_{n}\bnu) , & n\geq 1,\medskip \\
    \left(\bK_D^{k}\right)^*[-\nabla_{\mathbb{S}}Y_{n}+ (n+1)Y_{n}\bnu] = \lambda_{n+1,k,R}(-\nabla_{\mathbb{S}}Y_{n}+ (n+1)Y_{n}\bnu), & n\geq 0,
  \end{array}
\right.
\]
and
\[
\left\{
  \begin{array}{ll}
    \widetilde{\calA}_D^k[\nabla_{\mathbb{S}}Y_{n}+nY_{n}\bnu] = \chi_{n-1,k,R}(\nabla_{\mathbb{S}}Y_{n}+nY_{n}\bnu) , & n\geq 1,\medskip \\
    \widetilde{\calA}_D^k[-\nabla_{\mathbb{S}}Y_{n}+ (n+1)Y_{n}\bnu] = \chi_{n+1,k,R}(-\nabla_{\mathbb{S}}Y_{n}+ (n+1)Y_{n}\bnu), & n\geq 0.
  \end{array}
\right.
\]
By solving the above two systems, one can further show that
\begin{equation}\label{eq:spectrum_vec_K}
  \left\{
  \begin{array}{ll}
    \left(\bK_D^{k}\right)^*[\nabla_{\mathbb{S}}Y_{n}] = \pi_{1,n}\nabla_{\mathbb{S}}Y_{n} + \pi_{2,n}Y_{n}\bnu,\medskip \\
    \left(\bK_D^{k}\right)^*[Y_{n}\bnu] = \pi_{3,n}\nabla_{\mathbb{S}}Y_{n} + \pi_{4,n}Y_{n}\bnu,
  \end{array}
\right.
\end{equation}
with
\begin{equation}
\begin{split}
&  \pi_{1,n}=\frac{(n+1)\lambda_{n-1,k,R}+n\lambda_{n+1,k,R}}{2n+1},\quad \pi_{2,n}=\frac{n(n+1)}{2n+1}(\lambda_{n-1,k,R}-\lambda_{n+1,k,R}),\medskip\\
& \pi_{3,n}=\frac{\lambda_{n-1,k,R}-\lambda_{n+1,k,R}}{2n+1}, \hspace*{1.9cm} \pi_{4,n}=\frac{(n+1)\lambda_{n+1,k,R}+n\lambda_{n-1,k,R}}{2n+1};
\end{split}
\end{equation}
and
\begin{equation}\label{eq:spectrum_vec_A}
  \left\{
  \begin{array}{ll}
    \widetilde{\calA}_D^k[\nabla_{\mathbb{S}}Y_{n}] = \sigma_{1,n}\nabla_{\mathbb{S}}Y_{n} + \sigma_{2,n}Y_{n}\bnu,\medskip \\
    \widetilde{\calA}_D^k[Y_{n}\bnu] = \sigma_{3,n}\nabla_{\mathbb{S}}Y_{n} + \sigma_{4,n}Y_{n}\bnu,
  \end{array}
\right.
\end{equation}
with
\[
\begin{split}
&  \sigma_{1,n}=\frac{(n+1)\chi_{n-1,k,R}+n\chi_{n+1,k,R}}{2n+1},\quad \sigma_{2,n}=\frac{n(n+1)}{2n+1}(\chi_{n-1,k,R}-\chi_{n+1,k,R}),\medskip\\
 & \sigma_{3,n}=\frac{\chi_{n-1,k,R}-\chi_{n+1,k,R}}{2n+1}, \hspace*{1.9cm} \sigma_{4,n}=\frac{(n+1)\chi_{n+1,k,R}+n\chi_{n-1,k,R}}{2n+1}.
\end{split}
\]
With these preparations, we next derive the spectral properties of the operators $\calM_D^k$ and $\calL_D^{k}$. It is noted that $\calM_D^k$ and $\calL_D^{k}$ are entries of the matrix operator $\mathbb{K}$, and their spectral properties form a critical ingredient for our subsequent derivation of the spectral system for the operator $\mathbb{I}+\mathbb{K}$. 

\begin{prop}\label{prop:spec_m}
The orthogonal base functions of $L^2_{T}(\mathbb{S})$, $\nabla_{\mathbb{S}}Y_{n}$ and $\nabla_{\mathbb{S}}Y_{n}\times\bnu $ with $n\geq 1$, are eigenfunctions for the operator $\calM_D^{k}$ defined in \eqref{eq:def_m}, and one has
 \begin{equation}\label{eq:ll1}
 \calM_D^{k}[\nabla_{\mathbb{S}}Y_{n}\times\bnu]=m_{1,n}^k \nabla_{\mathbb{S}}Y_{n}\times\bnu\ \ \mbox{and}\ \  \calM_D^{k}[\nabla_{\mathbb{S}}Y_{n}]=m_{2,n}^k\nabla_{\mathbb{S}}Y_{n},
\end{equation}
with
\begin{equation}\label{eq:ll2}
 m_{1,n}^k=\left( \pi_{1,n} + \frac{1}{R}(\sigma_{1,n}-n(n+1)\sigma_{3,n})  \right)\quad\mbox{and}\quad m_{2,n}^k=\left( \lambda_{n,k,R} + \frac{\chi_{n,k,R}}{R} \right)
\end{equation}
where $\pi_{1,n}$,  $\sigma_{1,n}$ and $\sigma_{3,n}$, and, $\lambda_{n,k,R}$ and $\chi_{n,k,R}$  are given in \eqref{eq:spectrum_vec_K}, \eqref{eq:spectrum_vec_A} and Lemma \ref{thm:eigenvalue_k-star}, respectively.
\end{prop}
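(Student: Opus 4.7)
The plan is to reduce $\calM_D^k[\bvarphi]$ on the sphere to a combination of operators whose spectra are already catalogued in Lemma~\ref{thm:eigenvalue_k-star} and in formulas \eqref{eq:spectrum_vec_K}--\eqref{eq:spectrum_vec_A}, and then to read off the eigenvalues on the two test fields. Starting from the integral definition and the triple-product expansion
\[
\bnu_{\bx}\times(\nabla_{\bx}G\times\bvarphi(\by))=(\bnu_{\bx}\cdot\bvarphi(\by))\nabla_{\bx}G-(\bnu_{\bx}\cdot\nabla_{\bx}G)\bvarphi(\by),
\]
I would use tangentiality of $\bvarphi$ together with the spherical identity $\by\cdot\bvarphi(\by)=0$ to rewrite $\bnu_{\bx}\cdot\bvarphi(\by)=\bx\cdot\bvarphi(\by)/R$ on $\partial D$, then pull $\bx$ inside the gradient via $\int\nabla_{\bx}G\,(\bx\cdot\bvarphi(\by))\,ds(\by)=\nabla_{\bx}(\bx\cdot\calA_D^k[\bvarphi])-\calA_D^k[\bvarphi]$. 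Restricting to $\partial D$ and averaging the outside/inside limits (so that \eqref{eq:jump} supplies the principal-value interpretation of $\calM_D^k$), I aim to obtain the master identity
\[
\calM_D^k[\bvarphi]=\nabla_{\partial D}(\bnu\cdot\widetilde{\calA}_D^k[\bvarphi])-\tfrac{1}{R}\widetilde{\calA}_{D,T}^k[\bvarphi]-(\bK_D^k)^*_T[\bvarphi],
\]
in which the subscript $T$ denotes tangential projection to $\partial D$.

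For $\bvarphi=\nabla_{\mathbb{S}}Y_n$, formulas \eqref{eq:spectrum_vec_K}--\eqref{eq:spectrum_vec_A} give $\bnu\cdot\widetilde{\calA}_D^k[\bvarphi]=\sigma_{2,n}Y_n$, $\widetilde{\calA}_{D,T}^k[\bvarphi]=\sigma_{1,n}\nabla_{\mathbb{S}}Y_n$ and $(\bK_D^k)^*_T[\bvarphi]=\pi_{1,n}\nabla_{\mathbb{S}}Y_n$, while $\nabla_{\partial D}Y_n=\nabla_{\mathbb{S}}Y_n/R$ on the sphere of radius $R$. Assembling these and simplifying with the spherical-Bessel recurrences for $j_n,h_n^{(1)}$ together with the Wronskian $j_n h_n^{(1)\prime}-j_n' h_n^{(1)}=\rmi/(kR)^2$ collapses the combination $(\sigma_{2,n}-\sigma_{1,n})/R-\pi_{1,n}$ into the announced eigenvalue $m_{2,n}^k=\lambda_{n,k,R}+\chi_{n,k,R}/R$.

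For $\bvarphi=\nabla_{\mathbb{S}}Y_n\times\bnu$, I would first observe that this tangential field is itself an eigenfunction of both $(\bK_D^k)^*$ and $\widetilde{\calA}_D^k$ with eigenvalues $\lambda_{n,k,R}$ and $\chi_{n,k,R}$ respectively: each Cartesian component of $\nabla_{\mathbb{S}}Y_n\times\bnu=-\rmi\,(\bx\times\nabla)Y_n$ is a harmonic polynomial of degree $n$, so Lemma~\ref{thm:eigenvalue_k-star} applies component-wise. Since $\bnu\cdot(\nabla_{\mathbb{S}}Y_n\times\bnu)=0$, the first term in the master identity vanishes and the remaining two yield directly $\calM_D^k[\nabla_{\mathbb{S}}Y_n\times\bnu]=-(\lambda_{n,k,R}+\chi_{n,k,R}/R)\nabla_{\mathbb{S}}Y_n\times\bnu$; matching this with the stated expression $\pi_{1,n}+(\sigma_{1,n}-n(n+1)\sigma_{3,n})/R$ is a Bessel-function identity of the same flavour as in the first case.

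The main obstacle is twofold: correctly reconciling the boundary-jump terms when restricting the vector identity $\bnu\times(\nabla\times\bu)=\nabla(\bnu\cdot\bu)-\bu_T/R-(\bnu\cdot\nabla)\bu$ to $\partial D$ (the outside and inside limits give incompatible-looking expressions whose average recovers the p.v.\ $\calM_D^k$, precisely as sanctioned by \eqref{eq:jump}), and then carrying out the Bessel-function bookkeeping that collapses polynomial combinations of $\lambda_{n\pm 1},\chi_{n\pm 1}$ into the compact forms in \eqref{eq:ll2}. The structural identity in step one is elegant, but the final algebraic simplification is long; I have verified it at the quasi-static limit $k\to 0$, where every quantity becomes a rational function of $n$ and both sides agree, giving confidence that the recurrence-based simplification is available for arbitrary $k$.
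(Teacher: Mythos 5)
Your approach is genuinely different from the paper's, and it appears structurally sound, but it leaves a nontrivial algebraic step as a claim. The paper rewrites $\nabla_\bx G=-\nabla_\by G$, splits $\nabla_\by G$ into its normal and tangential parts on $\partial D$, and then integrates by parts in the $\by$--variable; this produces the stated coefficients $m_{1,n}^k=\pi_{1,n}+\tfrac1R(\sigma_{1,n}-n(n+1)\sigma_{3,n})$ and $m_{2,n}^k=\lambda_{n,k,R}+\chi_{n,k,R}/R$ \emph{directly}, term by term, with no residual Bessel bookkeeping. You instead apply BAC--CAB in the $\bx$--variable and the pull-out identity $\int(\bx\cdot\bvarphi)\nabla_\bx G\,ds=\nabla_\bx(\bx\cdot\calA_D^k[\bvarphi])-\calA_D^k[\bvarphi]$ to reach the master identity
\[
\calM_D^k[\bvarphi]=\nabla_{\partial D}\bigl(\bnu\cdot\widetilde{\calA}_D^k[\bvarphi]\bigr)-\tfrac{1}{R}\widetilde{\calA}_{D,T}^k[\bvarphi]-(\bK_D^k)^*_T[\bvarphi],
\]
which is correct once one observes that the normal components of the two BAC--CAB pieces cancel because $\calM_D^k$ is tangential, so only tangential projections are needed (the $\pm$ jump issue is a non-issue for the tangential gradient and the tangential projection of the single layer, and $(\bK_D^k)^*$ is already the principal value). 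Plugging in the test fields you obtain the eigenvalues in the alternative forms $(\sigma_{2,n}-\sigma_{1,n})/R-\pi_{1,n}$ for $\nabla_{\mathbb{S}}Y_n$ and $-(\lambda_{n,k,R}+\chi_{n,k,R}/R)$ for $\nabla_{\mathbb{S}}Y_n\times\bnu$, the latter using the (correct) componentwise eigenfunction property of $\nabla_{\mathbb{S}}Y_n\times\bnu$ for $(\bK_D^k)^*$ and $\widetilde{\calA}_D^k$. These do agree numerically with the announced $m_{2,n}^k$ and $m_{1,n}^k$ --- in particular your route exposes the pleasant spherical symmetry $m_{1,n}^k=-m_{2,n}^k$ --- but the reconciliation with the paper's stated formulas rests on two nontrivial spherical-Bessel identities (three-term recurrence plus Wronskian $j_nh_n^{(1)\prime}-j_n'h_n^{(1)}=\rmi/(kR)^2$) that you only assert and check at $k\to 0$. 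That is the genuine gap: the paper's $\by$--integration-by-parts route sidesteps the issue entirely because it produces the announced expressions as they stand, whereas your cleaner-looking master identity shifts all the algebraic burden into a Bessel lemma that still needs to be written out before the argument is complete.
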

\begin{proof}
With the help of vector calculus, the identity $\partial_{\bx_1}G(\bx,\by,k)=-\partial_{\by_1}G(\bx,\by,k)$ and the definition of $\calM_D^k$ given in \eqref{eq:def_m}, one can find that
\begin{equation}\label{eq:m1}
\begin{split}
  \calM_D^{k}[\bvarphi] & =\int_{\partial D} \bnu_{\bx}\times\nabla_{\bx}\times G(\bx,\by,k)\bvarphi(\by) ds(\by) \\
                        & = -\bnu_{\bx}\times \int_{\partial D}\nabla_{\by}G(\bx,\by,k)\times \bvarphi(\by) ds(\by) \\
                        & = -\bnu_{\bx}\times \int_{\partial D}((\nabla_{\by}G(\bx,\by,k)\cdot\bnu_\by)\bnu_\by +\nabla_{\partial D_{\by}}G(\bx,\by,k) )\times \bvarphi(\by) ds(\by) \\
                        & = -\bnu_{\bx}\times \int_{\partial D}((\nabla_{\bx}G(\bx,\by,k)\cdot\bnu_\bx)\bnu_\by +\nabla_{\partial D_{\by}}G(\bx,\by,k) )\times \bvarphi(\by) ds(\by),
\end{split}
\end{equation}
where the last identity follows from 
\[
 \nabla_{\by}G(\bx,\by,k)\cdot\bnu_\by =\nabla_{\bx}G(\bx,\by,k)\cdot\bnu_\bx \quad \mbox{for} \quad \bx, \by\in \partial D.
\]
For $\bvarphi=\nabla_{\mathbb{S}}Y_{n}\times\bnu $, one has
\begin{equation}\label{eq:p1}
 \begin{split}
     & -\bnu_{\bx}\times \int_{\partial D}(\nabla_{\bx}G(\bx,\by,k)\cdot\bnu_\bx)\bnu_\by\times\bvarphi ds(\by) \\
   = & -\bnu_{\bx}\times\left(\bK_D^{k}\right)^*[\nabla_{\mathbb{S}}Y_{n}] = \pi_{1,n}\nabla_{\mathbb{S}}Y_{n} \times  \bnu,
 \end{split}
\end{equation}
which follows from \eqref{eq:spectrum_vec_K}, and the following fact
\[
 \begin{split}
     & \int_{\partial D} \nabla_{\partial D_{\by}}G(\bx,\by,k) \times \bvarphi(\by) ds(\by) \\
   = & -\int_{\partial D} (\nabla_{\partial D_{\by}}G(\bx,\by,k)\cdot \nabla_{\mathbb{S}}Y_{n})\bnu_{\by}ds(\by)\\
   = & -\sum_{j=1}^3\be_j \int_{\partial D} (\nabla_{\partial D_{\by}}G(\bx,\by,k)\cdot \nabla_{\mathbb{S}}Y_{n})(\bnu_{\by}\cdot \be_j)ds(\by)\\
   = &  \sum_{j=1}^3\be_j \int_{\partial D} G(\bx,\by,k) \nabla_{\partial D}\cdot(\nabla_{\mathbb{S}}Y_{n}(\bnu_{\by}\cdot \be_j))ds(\by) \\
   = &  \sum_{j=1}^3\be_j \int_{\partial D} G(\bx,\by,k) (\nabla_{\partial D}\cdot\nabla_{\mathbb{S}}Y_{n}(\bnu_{\by}\cdot \be_j) + \nabla_{\mathbb{S}}Y_{n}\cdot \nabla_{\partial D}(\bnu_{\by}\cdot \be_j) )ds(\by) \\
   = & \frac{1}{R}\left(-n(n+1)\widetilde{\calA}_D^k[Y_n\bnu] + \widetilde{\calA}_D^k[\nabla_{\mathbb{S}}Y_{n}]\right),
 \end{split}
\]
where $\be_j$, $j=1,2,3$, are the unit coordinate vectors in $\mathbb{R}^3$ and the last identity follows from 
\[
  \sum_{j=1}^3\be_j  \left(\nabla_{\mathbb{S}}Y_{n}\cdot \nabla_{\partial D}(\bnu_{\by}\cdot \be_j)\right)=\frac{1}{R} \nabla_{\mathbb{S}}Y_{n}.
\]
From \eqref{eq:spectrum_vec_A}, one has that
\begin{equation}\label{eq:p2}
\begin{split}
 &-\bnu_{\bx}\times \int_{\partial D} \nabla_{\partial D_{\by}}G(\bx,\by,k) \times (\nabla_{\mathbb{S}}Y_{n}\times\bnu) ds(\by)\\
 =&\frac{1}{R}(\sigma_{1,n}-n(n+1)\sigma_{3,n}) \nabla_{\mathbb{S}}Y_{n} \times  \bnu.
 \end{split}
\end{equation}
Then by combining \eqref{eq:m1}, \eqref{eq:p1} and \eqref{eq:p2}, one can directly verify that
\[
 \calM_D^{k}[\nabla_{\mathbb{S}}Y_{n}\times\bnu]=\left( \pi_{1,n} + \frac{1}{R}(\sigma_{1,n}-n(n+1)\sigma_{3,n})  \right)\nabla_{\mathbb{S}}Y_{n}\times\bnu.
\]

For $\bvarphi=\nabla_{\mathbb{S}}Y_{n}=\bnu\times(\nabla_{\mathbb{S}}Y_{n}\times\bnu)$,
by \eqref{eq:vec_sphe_harm} and Theorem \ref{thm:eigenvalue_k-star}, one has that
\begin{equation}\label{eq:p3}
 \begin{split}
     & -\bnu_{\bx}\times \int_{\partial D}(\nabla_{\bx}G(\bx,\by,k)\cdot\bnu_\bx)\bnu_\by\times\bvarphi ds(\by) \\
   = & \bnu_{\bx}\times\left(\bK_D^{k}\right)^*[\nabla_{\mathbb{S}}Y_{n}\times\bnu] = \lambda_{n,k,R}\nabla_{\mathbb{S}}Y_{n}.
 \end{split}
\end{equation}
Furthermore, one can deduce that
\[
 \begin{split}
     & \int_{\partial D} \nabla_{\partial D_{\by}}G(\bx,\by,k) \times (\bnu_{\by}\times(\nabla_{\mathbb{S}}Y_{n}\times\bnu_{\by})) ds(\by) \\
   = &  \int_{\partial D} (\nabla_{\partial D_{\by}}G(\bx,\by,k)\cdot (\nabla_{\mathbb{S}}Y_{n}\times\bnu_{\by}))\bnu_{\by}ds(\by)\\
   = &  \sum_{j=1}^3 \be_j\int_{\partial D} (\nabla_{\partial D_{\by}}G(\bx,\by,k)\cdot (\nabla_{\mathbb{S}}Y_{n}\times\bnu_{\by}))(\bnu_{\by}\cdot \be_j)ds(\by)
   \end{split}
   \]
   \begin{equation}\label{eq:p4}
 \begin{split}
   = & -\sum_{j=1}^3 \be_j\int_{\partial D} G(\bx,\by,k) \nabla_{\partial D}\cdot((\nabla_{\mathbb{S}}Y_{n}\times\bnu_{\by})(\bnu_{\by}\cdot \be_j))ds(\by) \\
   = & -\sum_{j=1}^3 \be_j\int_{\partial D} G(\bx,\by,k) ((\nabla_{\partial D}\cdot(\nabla_{\mathbb{S}}Y_{n}\times\bnu_{\by}))(\bnu_{\by}\cdot \be_j)\\
   & + (\nabla_{\mathbb{S}}Y_{n}\times\bnu_{\by})\cdot \nabla_{\partial D}(\bnu_{\by}\cdot \be_j))ds(\by) \\
   = & -\sum_{j=1}^3 \be_j\int_{\partial D} G(\bx,\by,k) (  (\nabla_{\mathbb{S}}Y_{n}\times\bnu_{\by})\cdot \nabla_{\partial D}(\bnu_{\by}\cdot \be_j))ds(\by) \\
   = & -\frac{1}{R}\left( \widetilde{\calA}_D^k[\nabla_{\mathbb{S}}Y_{n}\times\bnu]\right).
 \end{split}
\end{equation}
In the derivation of \eqref{eq:p4}, we have used the fact that
\[
 \nabla_{\partial D}\cdot(\nabla_{\mathbb{S}}Y_{n}\times\bnu_{\by})=0,
\]
which follows from the identity \eqref{eq:lap}. Thus from Lemma \ref{thm:eigenvalue_k-star}, one has that
\begin{equation}\label{eq:p5}
 -\bnu_{\bx}\times \int_{\partial D} \nabla_{\partial D_{\by}}G(\bx,\by,k) \times (\nabla_{\mathbb{S}}Y_{n}\times\bnu) ds(\by)=\frac{\chi_{n,k,R}}{R} \nabla_{\mathbb{S}}Y_{n}.
\end{equation}
Finally, with the help of \eqref{eq:m1}, \eqref{eq:p3} and \eqref{eq:p5}, one can verify that
\[
 \calM_D^{k}[\nabla_{\mathbb{S}}Y_{n}]=\left(\lambda_{n,k,R} + \frac{\chi_{n,k,R}}{R} \right)\nabla_{\mathbb{S}}Y_{n}.
\]

The proof is complete.
\end{proof}

\begin{prop}\label{prop:spec_l}
For the operator $\calL_D^{k}$ defined in \eqref{eq:def_l}, and  $\nabla_{\mathbb{S}}Y_{n}$ and $\nabla_{\mathbb{S}}Y_{n}\times\bnu $ with $n\geq 1$, there hold the following identities,
  \[
 \calL_D^{k}[\nabla_{\mathbb{S}}Y_{n}\times\bnu] = l_{1,n}^k \nabla_{\mathbb{S}}Y_{n}\ \ \mbox{and}\ \  \calL_D^{k}[\nabla_{\mathbb{S}}Y_{n}] = l_{2,n}^k \nabla_{\mathbb{S}}Y_{n}\times\bnu,
\]
with
\[
  l_{1,n}^k=k^2 \chi_{n,k,R}\quad\mbox{and}\quad  l_{2,n}^k=\left( \frac{n(n+1)}{R^2}\chi_{n,k,R} -k^2 \sigma_{1,n} \right), 
\]
where $\sigma_{1,n}$ and $\chi_{n,k,R}$  are given in \eqref{eq:spectrum_vec_A} and Theorem \ref{thm:eigenvalue_k-star}, respectively.
\end{prop}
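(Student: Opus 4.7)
The plan is to exploit the spectral formulas already established for $\widetilde{\calA}_D^k$ and $\widetilde{\calS}_D^k$ (from \eqref{eq:spectrum_vec_A} and Lemma \ref{thm:eigenvalue_k-star}) together with the intrinsic surface calculus on the sphere of radius $R$. The first observation is that the $\nabla$ appearing in the definition of $\calL_D^k$ in \eqref{eq:def_l} is the full spatial gradient of the scalar single layer potential, but the outer $\bnu_\bx\times$ annihilates the normal part, so on $\partial D$ one may replace it with $\nabla_{\partial D}$ acting on $\widetilde{\calS}_D^k[\,\cdot\,]$. Thus it suffices to analyze
\[
  \calL_D^k[\bvarphi]=k^2\,\bnu\times\widetilde{\calA}_D^k[\bvarphi]+\bnu\times\nabla_{\partial D}\widetilde{\calS}_D^k[\nabla_{\partial D}\cdot\bvarphi].
\]

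For $\bvarphi=\nabla_{\mathbb{S}}Y_n\times\bnu$, the identity \eqref{eq:lap} together with $\nabla_{\mathbb{S}}Y_n\times\bnu=\vec{\mathrm{curl}}_{\partial D}Y_n$ (up to the scaling encoded in the convention of \eqref{eq:vec_sphe_harm}) gives $\nabla_{\partial D}\cdot\bvarphi=0$, so the second piece drops out. For the remaining piece, I would reuse the computation already carried out in the proof of Proposition \ref{prop:spec_m}: from \eqref{eq:p4}--\eqref{eq:p5} one reads off $\bnu_\bx\times\widetilde{\calA}_D^k[\nabla_{\mathbb{S}}Y_n\times\bnu]=\chi_{n,k,R}\nabla_{\mathbb{S}}Y_n$, yielding $l_{1,n}^k=k^2\chi_{n,k,R}$ at once.

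For $\bvarphi=\nabla_{\mathbb{S}}Y_n$ the key is to evaluate $\nabla_{\partial D}\cdot\nabla_{\mathbb{S}}Y_n$ on $\partial D$. Since $\nabla_{\mathbb{S}}Y_n$ is the surface gradient on the unit sphere while the divergence is taken on the sphere of radius $R$, a scaling argument gives $\nabla_{\partial D}Y_n=\tfrac{1}{R}\nabla_{\mathbb{S}}Y_n$ and hence $\nabla_{\partial D}\cdot\nabla_{\mathbb{S}}Y_n=R\,\Delta_{\partial D}Y_n=-\tfrac{n(n+1)}{R}Y_n$. Combined with Lemma \ref{thm:eigenvalue_k-star}, one obtains $\widetilde{\calS}_D^k[\nabla_{\partial D}\cdot\nabla_{\mathbb{S}}Y_n]=-\tfrac{n(n+1)}{R}\chi_{n,k,R}Y_n$, and then $\nabla_{\partial D}$ of this scalar, followed by $\bnu\times$, produces $\tfrac{n(n+1)}{R^2}\chi_{n,k,R}\,\nabla_{\mathbb{S}}Y_n\times\bnu$. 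For the remaining piece, \eqref{eq:spectrum_vec_A} gives $\widetilde{\calA}_D^k[\nabla_{\mathbb{S}}Y_n]=\sigma_{1,n}\nabla_{\mathbb{S}}Y_n+\sigma_{2,n}Y_n\bnu$; after $\bnu\times$ the normal part vanishes and the tangential part contributes $-k^2\sigma_{1,n}\nabla_{\mathbb{S}}Y_n\times\bnu$. Adding the two pieces delivers the stated $l_{2,n}^k=\tfrac{n(n+1)}{R^2}\chi_{n,k,R}-k^2\sigma_{1,n}$.

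The routine portion is symbolic: once the correct spectral actions are substituted, cancellations and the sign conventions $\bnu\times\nabla_{\mathbb{S}}Y_n=-\nabla_{\mathbb{S}}Y_n\times\bnu$ line everything up automatically. The main pitfall—where I would take the most care—is the $R$-scaling between quantities defined on $\mathbb{S}$ (the basis functions in \eqref{eq:vec_sphe_harm}) and the surface differential operators living on $\partial D$: the factors of $R$ appearing in $\nabla_{\partial D}\cdot\nabla_{\mathbb{S}}Y_n=-\tfrac{n(n+1)}{R}Y_n$ and in $\nabla_{\partial D}Y_n=\tfrac{1}{R}\nabla_{\mathbb{S}}Y_n$ must be consistent with the same convention that yielded $\chi_{n,k,R}$ in Lemma \ref{thm:eigenvalue_k-star}, otherwise spurious powers of $R$ would contaminate $l_{2,n}^k$.
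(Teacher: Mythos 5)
Your proposal is correct and follows essentially the same route as the paper's proof: you drop the $\nabla_{\partial D}\cdot$ term for $\bT_n=\nabla_{\mathbb{S}}Y_n\times\bnu$ via \eqref{eq:lap}, use the spectral action of $\widetilde{\calA}_D^k$ on $\bT_n$ (readable from \eqref{eq:p4}--\eqref{eq:p5}) to get $l_{1,n}^k$, and for $\nabla_{\mathbb{S}}Y_n$ you split $\calL_D^k$ into the $k^2\widetilde{\calA}_D^k$ piece (handled by \eqref{eq:spectrum_vec_A}) and the surface-gradient-of-single-layer piece (handled via $\nabla_{\partial D}\cdot\nabla_{\mathbb{S}}Y_n=-\tfrac{n(n+1)}{R}Y_n$ and Lemma~\ref{thm:eigenvalue_k-star}), exactly as in the paper; the only cosmetic difference is that you replace the full gradient by $\nabla_{\partial D}$ up front, whereas the paper retains $\nabla\calS_D^k$ and lets $\bnu_\bx\times$ kill the normal component.
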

\begin{proof}
  Recall that
\begin{equation}\label{eq:l}
 \calL_D^{k}[\bvarphi]=\bnu_{\bx}\times\left(k^2 \widetilde{\calA}^k_D[\bvarphi](\bx) + \nabla\widetilde{\calS}_D^k[\nabla_{\partial D}\cdot \bvarphi] \right).
\end{equation}
For $\bvarphi=\nabla_{\mathbb{S}}Y_{n}\times\bnu $, one can show that
\[
 \bnu_{\bx}\times k^2 \widetilde{\calA}^k_D[\nabla_{\mathbb{S}}Y_{n}\times\bnu] = k^2 \chi_{n,k,R} \nabla_{\mathbb{S}}Y_{n}.
\]
The identity in \eqref{eq:lap} gives 
\[
 \nabla_{\partial D}\cdot (\nabla_{\mathbb{S}}Y_{n}\times\bnu)=0,
\]
and thus from the last two identities, one has 
\[
 \calL_D^{k}[\nabla_{\mathbb{S}}Y_{n}\times\bnu] = k^2 \chi_{n,k,R} \nabla_{\mathbb{S}}Y_{n}.
\]
For $\bvarphi=\nabla_{\mathbb{S}}Y_{n}$, by \eqref{eq:spectrum_vec_A}, one can show
\[
 \bnu_{\bx}\times k^2 \widetilde{\calA}^k_D[\nabla_{\mathbb{S}}Y_{n}] = -k^2 \sigma_{1,n} \nabla_{\mathbb{S}}Y_{n}\times\bnu. 
\]
As for another term in \eqref{eq:l}, one has that
\[
 \begin{split}
     & \bnu_{\bx}\times \nabla\calS_D^k[\nabla_{\partial D}\cdot \nabla_{\mathbb{S}}Y_{n}] = \bnu_{\bx}\times \nabla\calS_D^k[\triangle_{\mathbb{S}}Y_{n}/R] \\
   = & \frac{-n(n+1)}{R}\bnu_{\bx}\times \nabla\calS_D^k[Y_{n}] = \frac{n(n+1)}{R^2}\chi_{n,k,R} \nabla_{\mathbb{S}}Y_{n}\times\bnu,
 \end{split}
\]
which follows from the identity \eqref{eq:lap}, Lemma \ref{thm:eigenvalue_k-star}, and the fact (cf. \cite{Jcn})
\[
 \triangle_{\mathbb{S}}Y_{n}=-n(n+1) Y_{n}.
\]
Therefore, there holds
\[
 \calL_D^{k}[\nabla_{\mathbb{S}}Y_{n}] = \left( \frac{n(n+1)}{R^2}\chi_{n,k,R} -k^2 \sigma_{1,n} \right)\nabla_{\mathbb{S}}Y_{n}\times\bnu.
\]

The proof is complete.
\end{proof}

After achieving the spectral systems for the operators $\calM_D^{k}$ and $\calL_D^{k}$, respectively, in Propositions \ref{prop:spec_m} and \ref{prop:spec_l}, we proceed to derive the spectral system of the operator $\mathbb{I}+\mathbb{K}$ defined in \eqref{eq:operator_eq}. We have

\begin{thm}\label{thm:spectrum_final}
  The eigenvalues and their corresponding eigenfunctions for the operator $\mathbb{I}+\mathbb{K}$ given in \eqref{eq:operator_eq} are given as follows with $n\in\mathbb{N}$,
\begin{equation}\label{eq:spectrum}
 \begin{split}
     & (\mathbb{I}+\mathbb{K}) [\bXi_{1,n}] = \tau_{1,n}\bXi_{1,n}, \quad (\mathbb{I}+\mathbb{K}) [\bXi_{2,n}] = \tau_{2,n}\bXi_{2,n}, \\
     & (\mathbb{I}+\mathbb{K}) [\bXi_{3,n}] = \tau_{3,n}\bXi_{3,n}, \quad (\mathbb{I}+\mathbb{K}) [\bXi_{4,n}] = \tau_{4,n}\bXi_{4,n},
 \end{split}
\end{equation}
where
\begin{equation}\label{eq:spectrum1}
 \begin{split}
     & \tau_{1,n}=\alpha_1(l_{1,n}^{k_c} - l_{1,n}^{k_m}) + \frac{k_c^2}{\mu_c} m_{2,n}^{k_c} - \frac{k_m^2}{\mu_m} m_{2,n}^{k_m} + \frac{k_c^2}{2\mu_c} +\frac{k_m^2}{2\mu_m}, \\
     & \tau_{2,n}=\alpha_2(l_{1,n}^{k_c} - l_{1,n}^{k_m}) + \frac{k_c^2}{\mu_c} m_{2,n}^{k_c} - \frac{k_m^2}{\mu_m} m_{2,n}^{k_m} + \frac{k_c^2}{2\mu_c} +\frac{k_m^2}{2\mu_m}, \\
     & \tau_{3,n}=\alpha_3(l_{2,n}^{k_c} - l_{2,n}^{k_m}) + \frac{k_c^2}{\mu_c} m_{1,n}^{k_c} - \frac{k_m^2}{\mu_m} m_{1,n}^{k_m} + \frac{k_c^2}{2\mu_c} +\frac{k_m^2}{2\mu_m}, \\
     & \tau_{4,n}=\alpha_4(l_{2,n}^{k_c} - l_{2,n}^{k_m}) + \frac{k_c^2}{\mu_c} m_{1,n}^{k_c} - \frac{k_m^2}{\mu_m} m_{1,n}^{k_m} + \frac{k_c^2}{2\mu_c} +\frac{k_m^2}{2\mu_m},
 \end{split}
\end{equation}
and
\[
 \begin{split}
     & \bXi_{1,n}= \left[
                 \begin{array}{c}
                   \alpha_1 \nabla_{\mathbb{S}}Y_{n}\times\bnu \\
                    \nabla_{\mathbb{S}}Y_{n} \\
                 \end{array}
               \right], \quad
 \bXi_{2,n}= \left[
                 \begin{array}{c}
                   \alpha_2 \nabla_{\mathbb{S}}Y_{n}\times\bnu \\
                    \nabla_{\mathbb{S}}Y_{n} \\
                 \end{array}
               \right],  \\
     & \bXi_{3,n}= \left[
                 \begin{array}{c}
                   \alpha_3 \nabla_{\mathbb{S}}Y_{n} \\
                    \nabla_{\mathbb{S}}Y_{n}\times\bnu \\
                 \end{array}
               \right], \hspace*{.85cm}
 \bXi_{4,n}= \left[
                 \begin{array}{c}
                   \alpha_4 \nabla_{\mathbb{S}}Y_{n} \\
                    \nabla_{\mathbb{S}}Y_{n}\times\bnu \\
                 \end{array}
               \right],
 \end{split}
\]
with
\[
 \begin{split}
     & \alpha_1=\frac{k_m^2 (2m_{2,n}^{k_m}-1) \mu_c +\mu_m(\mu_c(\mu_c + 2 m_{1,n}^{k_c}\mu_c +\mu_m-2m_{1,m}^{k_m}\mu_m ) - k_c^2 (2m_{2,n}^{k_c}+1) )  + \sqrt{\beta_1} }{4(l_{1,n}^{k_c} - l_{1,n}^{k_m})\mu_c\mu_m}, \\
     & \alpha_2=\frac{k_m^2 (2m_{2,n}^{k_m}-1) \mu_c +\mu_m(\mu_c(\mu_c + 2 m_{1,n}^{k_c}\mu_c +\mu_m-2m_{1,m}^{k_m}\mu_m ) - k_c^2 (2m_{2,n}^{k_c}+1) )  - \sqrt{\beta_1} }{4(l_{1,n}^{k_c} - l_{1,n}^{k_m})\mu_c\mu_m}, \\
     & \alpha_3=\frac{k_m^2 (2m_{1,n}^{k_m}-1) \mu_c +\mu_m(\mu_c(\mu_c + 2 m_{2,n}^{k_c}\mu_c +\mu_m-2m_{2,m}^{k_m}\mu_m ) - k_c^2 (2m_{1,n}^{k_c}+1) )  + \sqrt{\beta_2} }{4(l_{2,n}^{k_c} - l_{2,n}^{k_m})\mu_c\mu_m}, \\
     & \alpha_4=\frac{k_m^2 (2m_{1,n}^{k_m}-1) \mu_c +\mu_m(\mu_c(\mu_c + 2 m_{2,n}^{k_c}\mu_c +\mu_m-2m_{2,m}^{k_m}\mu_m ) - k_c^2 (2m_{1,n}^{k_c}+1) )  - \sqrt{\beta_2} }{4(l_{2,n}^{k_c} - l_{2,n}^{k_m})\mu_c\mu_m},
 \end{split}
\]
and
\[
 \begin{split}
     & \beta_1=16(l_{1,n}^{k_c} - l_{1,n}^{k_m})(l_{2,n}^{k_c} - l_{2,n}^{k_m})\mu_c^2\mu_m^2 +  \\
     &  \qquad \left(k_m^2 (2m_{2,n}^{k_m}-1)\mu_c \right.  \left. +\mu_m(\mu_c( \mu_c + 2m_{1,n}^{k_c}\mu_c - 2m_{1,n}^{k_m}\mu_m) -k_c^2 (2m_{2,n}^{k_c} +1) )\right)^2, \\
     & \beta_2=16(l_{1,n}^{k_c} - l_{1,n}^{k_m})(l_{2,n}^{k_c} - l_{2,n}^{k_m})\mu_c^2\mu_m^2 +  \\
     &  \qquad \left(k_m^2 (2m_{1,n}^{k_m}-1)\mu_c \right.  \left. +\mu_m(\mu_c( \mu_c + 2m_{2,n}^{k_c}\mu_c - 2m_{2,n}^{k_m}\mu_m) -k_c^2 (2m_{1,n}^{k_c} +1) )\right)^2.
 \end{split}
\]
Here $m_{1,n}^k, m_{2,n}^k$ and $l_{1,n}^k, l_{2,n}^k$ are, respectively, given in Propositions \ref{prop:spec_m} and \ref{prop:spec_l}. Moreover, the eigenfunctions $\big\{\{\bXi_{i,n}\}_{i=1}^4\big\}_{n\in\mathbb{N}}$ form a complete basis of $L^2_{T}(\partial D)^2$.
\end{thm}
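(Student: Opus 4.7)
The plan is to exploit the block structure that Propositions~\ref{prop:spec_m} and \ref{prop:spec_l} reveal. Observe that $\calM_D^k$ preserves each of $\nabla_{\mathbb{S}}Y_n$ and $\nabla_{\mathbb{S}}Y_n\times\bnu$, while $\calL_D^k$ swaps the two: it sends $\nabla_{\mathbb{S}}Y_n\times\bnu$ to a multiple of $\nabla_{\mathbb{S}}Y_n$ and vice versa. Since $\mathbb{K}$ is built out of $\calM_D^{k_{c,m}}$ (on the diagonal blocks) and $\calL_D^{k_{c,m}}$ (on the off-diagonal blocks), the four-dimensional space
\[
V_n:=\mathrm{span}\bigl\{[\nabla_{\mathbb{S}}Y_n\times\bnu,0]^{\mathsf T},\,[\nabla_{\mathbb{S}}Y_n,0]^{\mathsf T},\,[0,\nabla_{\mathbb{S}}Y_n\times\bnu]^{\mathsf T},\,[0,\nabla_{\mathbb{S}}Y_n]^{\mathsf T}\bigr\}
\]
is invariant under $\mathbb{I}+\mathbb{K}$ and, more refined, splits into two $\mathbb{I}+\mathbb{K}$-invariant two-dimensional subspaces
\[
V_n^{(\mathrm{I})}=\mathrm{span}\bigl\{[\nabla_{\mathbb{S}}Y_n\times\bnu,0]^{\mathsf T},\,[0,\nabla_{\mathbb{S}}Y_n]^{\mathsf T}\bigr\},\quad
V_n^{(\mathrm{II})}=\mathrm{span}\bigl\{[\nabla_{\mathbb{S}}Y_n,0]^{\mathsf T},\,[0,\nabla_{\mathbb{S}}Y_n\times\bnu]^{\mathsf T}\bigr\}.
\]

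First I would search for eigenvectors in $V_n^{(\mathrm{I})}$ of the form $\bXi=[\alpha\,\nabla_{\mathbb{S}}Y_n\times\bnu,\,\nabla_{\mathbb{S}}Y_n]^{\mathsf T}$. Applying $\mathbb{I}+\mathbb{K}$ componentwise and using Propositions~\ref{prop:spec_m} and \ref{prop:spec_l}, the equation $(\mathbb{I}+\mathbb{K})[\bXi]=\tau\bXi$ collapses to the scalar system
\[
\begin{cases}
\alpha\bigl(\tfrac{\mu_c+\mu_m}{2}+\mu_c m_{1,n}^{k_c}-\mu_m m_{1,n}^{k_m}\bigr)+(l_{2,n}^{k_c}-l_{2,n}^{k_m})=\tau\alpha,\medskip\\
\alpha(l_{1,n}^{k_c}-l_{1,n}^{k_m})+\tfrac{k_c^2}{2\mu_c}+\tfrac{k_m^2}{2\mu_m}+\tfrac{k_c^2}{\mu_c}m_{2,n}^{k_c}-\tfrac{k_m^2}{\mu_m}m_{2,n}^{k_m}=\tau.
\end{cases}
\]
Substituting the second into the first eliminates $\tau$ and yields a quadratic in $\alpha$ whose two roots are precisely $\alpha_1$ and $\alpha_2$; plugging each back into the second equation gives $\tau_{1,n}$ and $\tau_{2,n}$ respectively. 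A direct (but unilluminating) manipulation of the quadratic formula recovers the closed forms stated in the theorem, with discriminant $\beta_1$.

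Next, I would repeat the same reduction on $V_n^{(\mathrm{II})}$ with ansatz $\bXi=[\alpha\,\nabla_{\mathbb{S}}Y_n,\,\nabla_{\mathbb{S}}Y_n\times\bnu]^{\mathsf T}$; by symmetry the roles of the pairs $(m_{1,n}^k,l_{1,n}^k)$ and $(m_{2,n}^k,l_{2,n}^k)$ are interchanged, producing $\alpha_3,\alpha_4$ and $\tau_{3,n},\tau_{4,n}$ together with the discriminant $\beta_2$. Finally, completeness of $\{\{\bXi_{i,n}\}_{i=1}^{4}\}_{n\in\mathbb{N}}$ in $L^2_T(\partial D)^2$ follows because $\{\nabla_{\mathbb{S}}Y_n,\nabla_{\mathbb{S}}Y_n\times\bnu\}_{n\geq 1}$ is an orthogonal basis of $L^2_T(\mathbb{S})$ (as recalled before Lemma~\ref{thm:eigenvalue_k-star}), the rescaling by $R$ leaves this true on $\partial D$, and for each $n$ the four vectors $\bXi_{i,n}$ span the four-dimensional space $V_n$ provided $\alpha_1\neq\alpha_2$ and $\alpha_3\neq\alpha_4$, which corresponds to $\beta_1,\beta_2\neq 0$; in the degenerate case a standard Jordan-type completion argument (or a small perturbation of the parameters) restores a complete set.

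The main obstacle is purely algebraic rather than conceptual: the $2\times 2$ reduction is transparent, but matching the explicit formulas for $\alpha_i$ and $\tau_{i,n}$ in the statement requires carefully tracking the signs and factors of $\mu_c,\mu_m,k_c,k_m$ while solving the quadratic, and then verifying that the radicand simplifies to the displayed $\beta_1,\beta_2$. I would carry out this algebra by first writing the quadratic in the compact form $(l_{1,n}^{k_c}-l_{1,n}^{k_m})\alpha^2+B_n\alpha-(l_{2,n}^{k_c}-l_{2,n}^{k_m})=0$, identifying $B_n$ with the bracketed coefficient appearing in the numerators of $\alpha_1,\alpha_2$, and then reading off $\beta_1=B_n^2+4(l_{1,n}^{k_c}-l_{1,n}^{k_m})(l_{2,n}^{k_c}-l_{2,n}^{k_m})\mu_c^2\mu_m^2/\mu_c^2\mu_m^2$ after clearing denominators; the analogous computation on $V_n^{(\mathrm{II})}$ closes the argument.
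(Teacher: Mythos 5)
Your proposal is correct and takes essentially the same route as the paper. The paper's own proof consists of a single sentence---\eqref{eq:spectrum} ``can be shown by direct calculations with the help of Propositions \ref{prop:spec_m} and \ref{prop:spec_l}''---followed by the completeness remark; your reduction to the two invariant $2\times 2$ blocks $V_n^{(\mathrm{I})}$, $V_n^{(\mathrm{II})}$ and the resulting quadratic in $\alpha$ is exactly the computation the paper is implicitly invoking, and the scalar system you display, once the second equation is read off, reproduces $\tau_{1,n},\ldots,\tau_{4,n}$ verbatim. One small bookkeeping slip: the theorem's $\beta_i$ are the discriminant $B_n^2+4(l_{1,n}^{k_c}-l_{1,n}^{k_m})(l_{2,n}^{k_c}-l_{2,n}^{k_m})$ after multiplication by $4\mu_c^2\mu_m^2$ (to clear the denominators hidden in $B_n$, which explains the $4(l_{1,n}^{k_c}-l_{1,n}^{k_m})\mu_c\mu_m$ in the denominator of $\alpha_i$), whereas your final formula for $\beta_1$ has the $\mu_c^2\mu_m^2$ factors cancelling; this does not affect the argument. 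You also correctly flag that the stated completeness needs $\alpha_1\neq\alpha_2$ and $\alpha_3\neq\alpha_4$ (and implicitly $l_{1,n}^{k_c}\neq l_{1,n}^{k_m}$, $l_{2,n}^{k_c}\neq l_{2,n}^{k_m}$ so the $\alpha_i$ are even defined), a genericity caveat the paper passes over silently; your proposed Jordan/perturbation patch would not literally produce four \emph{eigen}functions in the degenerate case, but this is a gap in the theorem as stated rather than in your reasoning.
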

\begin{proof}
\eqref{eq:spectrum} can be shown by direct calculations with the help of Propositions \ref{prop:spec_m} and \ref{prop:spec_l}. The completeness of the set of eigenfunctions $\big\{\{\bXi_{i,n}\}_{i=1}^4\big\}_{n\in\mathbb{N}}$ on $L^2_{T}(\partial D)^2$ follows from the fact that the family $(\nabla_{\mathbb{S}}Y_{n},\nabla_{\mathbb{S}}Y_{n}\times\bnu )$ with $n\geq 1$ forms an orthogonal basis of $L^2_{T}(\mathbb{S})$.
\end{proof}

\section{Surface plasmon resonance}

As applications of the spectral results in the previous section, in particular Theorem~\ref{thm:spectrum_final}, we next construct plasmonic structures of the form \eqref{eq:a1}--\eqref{eq:a2} that can induce surface plasmon resonance and cloaking effect. In this section, we mainly consider the surface plasmon resonance.

By Definition \ref{def:reson} and the corresponding discussion at the end of Section 2, it is sufficient for us to design the electric permittivity $\epsilon_c$ and the magnetic permeability $\mu_c$ in the domain $D$ such that one of the eigenvalues of the associated operator $\mathbb{I}+\mathbb{K}$ is small enough. However, from the expressions of $\{\tau_{i,n}\}_{i=1}^4$, $n\in\mathbb{N}$, in \eqref{eq:spectrum1}, these eigenvalues are too complicated to allow for the explicit design. With the aid of computer searching, we first show the existence of such plasmonic structures. Suppose $D$ is the unit ball and set the parameters in \eqref{eq:a1} and \eqref{eq:a2} to be
\begin{equation}\label{eq:config1}
 \mu_m=\epsilon_m=1,\ \  \omega=5, \quad \mu_c=1, \quad \mbox{and} \quad \epsilon_c=-1.04018+0.00004\rmi. 
\end{equation}
One can verify from the expression of $\tau_{1,n}$ in \eqref{eq:spectrum1} that
\[
  |\tau_{1,40}|\ll1. 
 \]
Let the incident wave be chosen to be a plane wave of the form
\begin{equation}\label{eq:config2}
\bE^i=4\left(e^{\rmi\omega (x/\sqrt{2}+y/\sqrt{2})}, -e^{\rmi\omega (x/\sqrt{2}+y/\sqrt{2})}, 0\right),
\end{equation}
which guarantees that the Fourier coefficient in \eqref{eq:s1} corresponding to $\tau_{1,40}$ is not vanishing. Hence, the conditions in Definition~\ref{def:reson} are fulfilled and surface resonance occurs. Indeed, we plot the resonant electric field in Fig.~1 corresponding to the electromagnetic configuration in \eqref{eq:config1} and \eqref{eq:config2}. Here and also in what follows, we make use of the finite-element method in numerically solving the involved Maxwell systems in the simulations. It can be readily seen that there is significant field enhancement near the boundary of the plasmonic inclusion, namely surface plasmon resonance occurs. 

\begin{figure}\label{fig:resonance}
  \centering
 {\includegraphics[width=4.4cm]{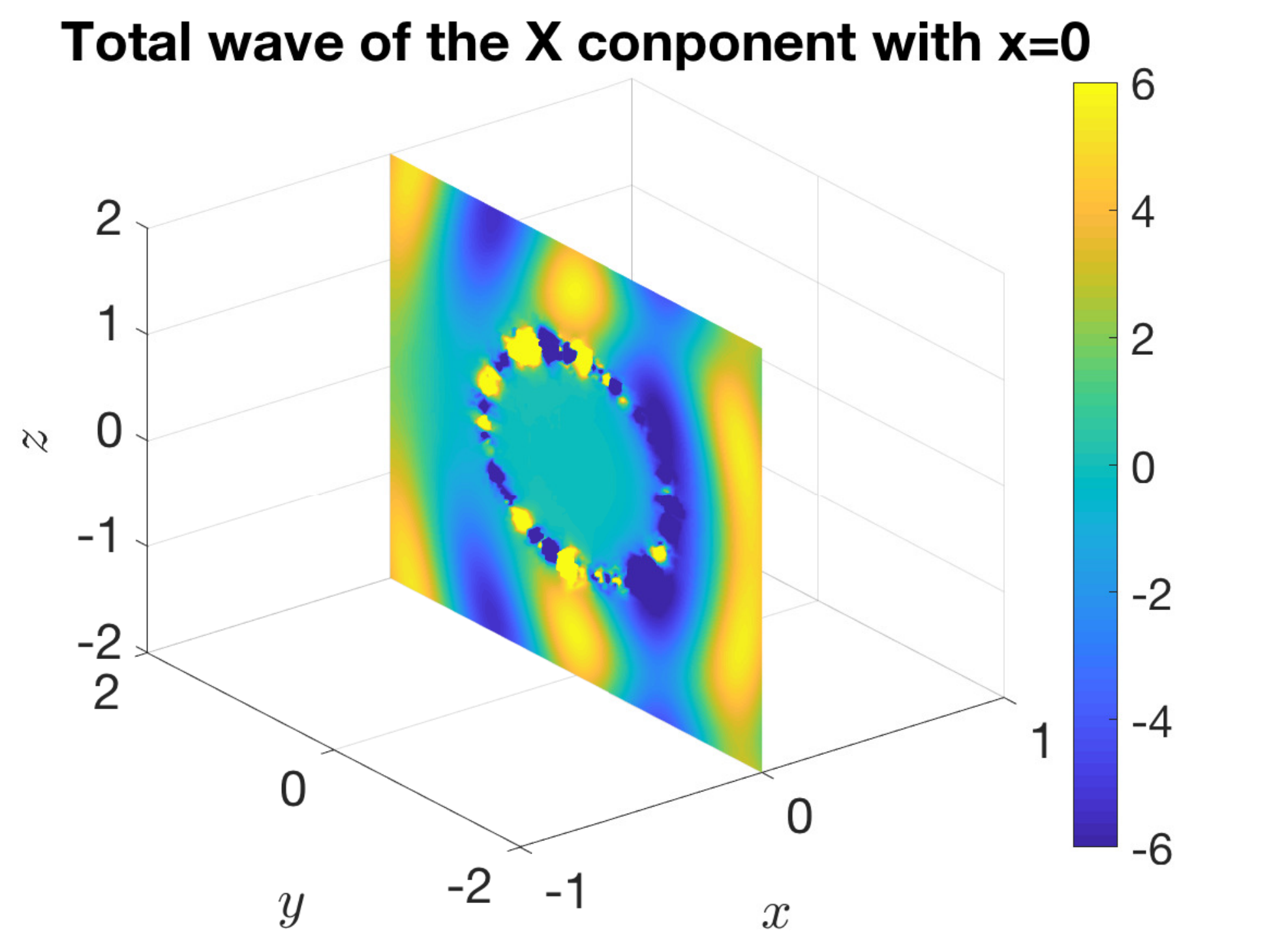}}
 {\includegraphics[width=4.4cm]{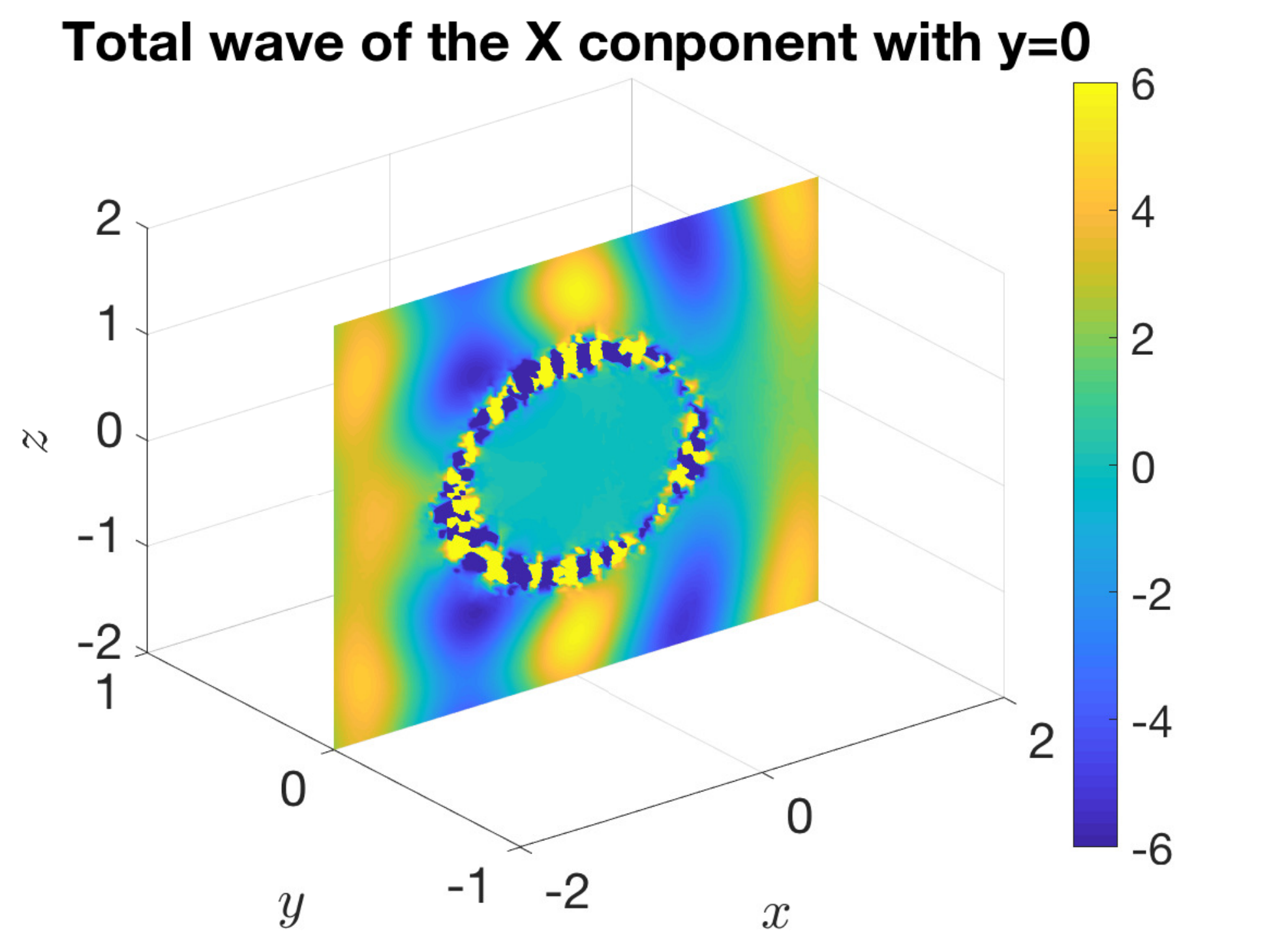}}
 {\includegraphics[width=4.4cm]{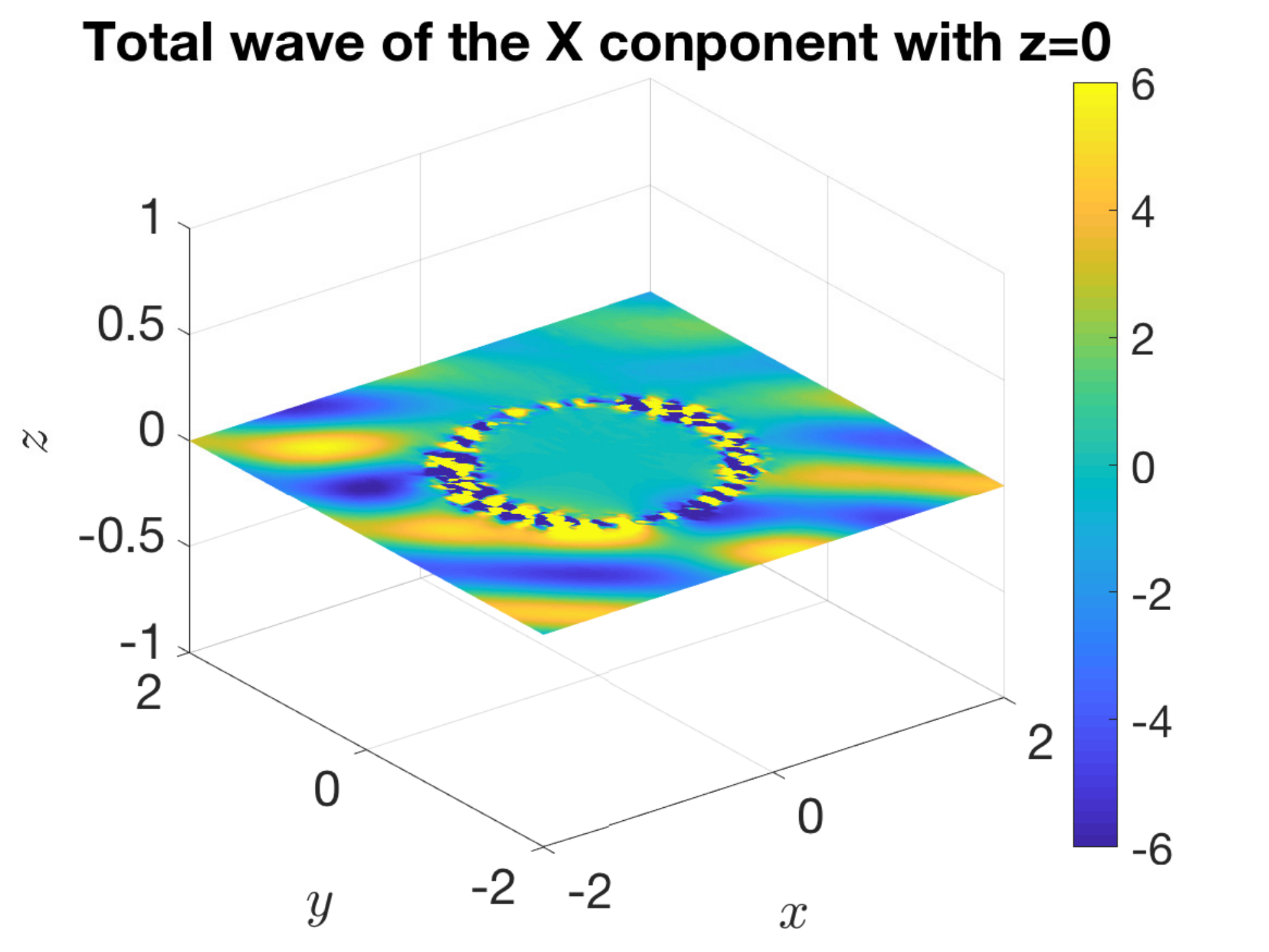}\\}
{\includegraphics[width=4.4cm]{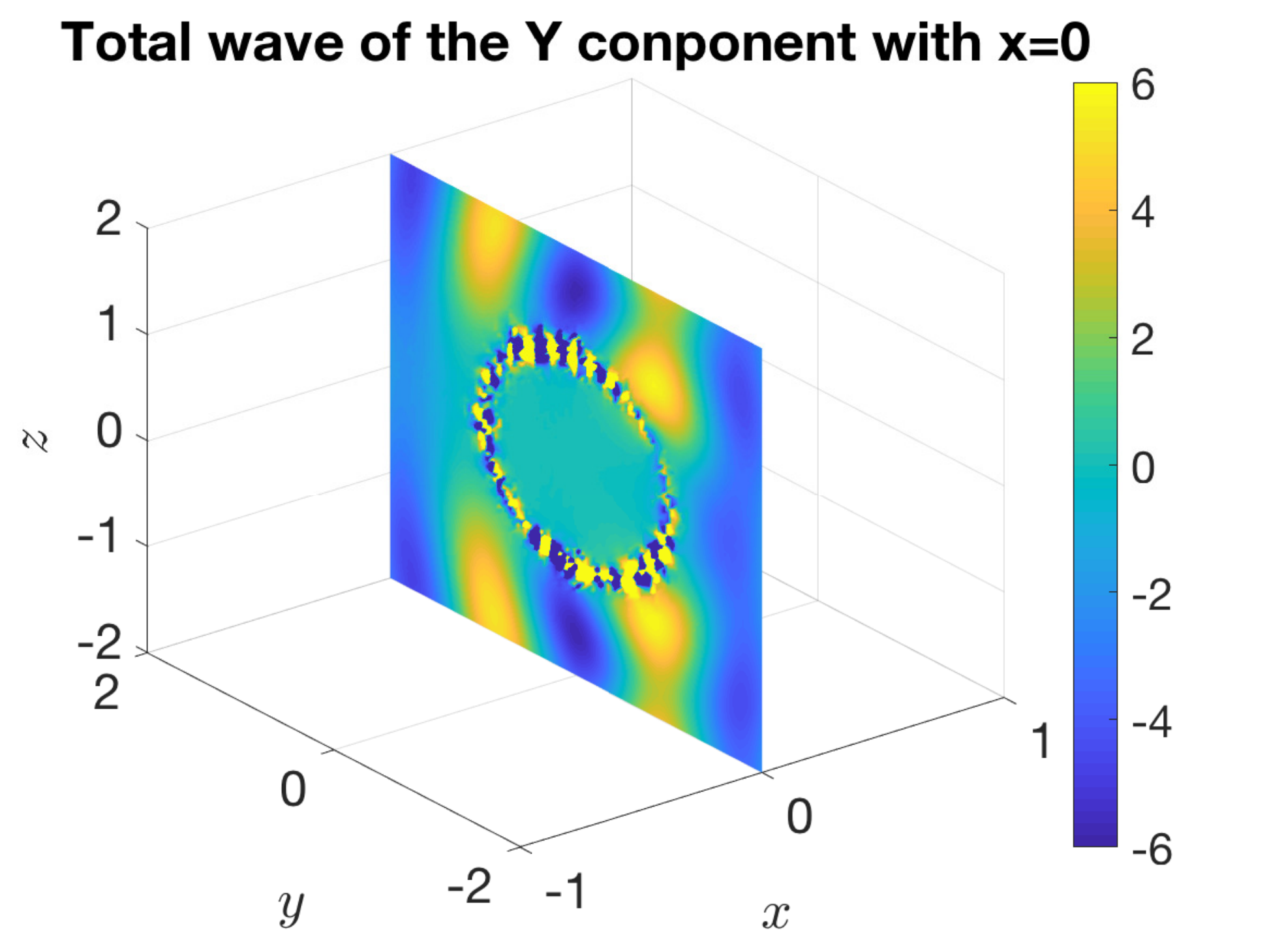}}
 {\includegraphics[width=4.4cm]{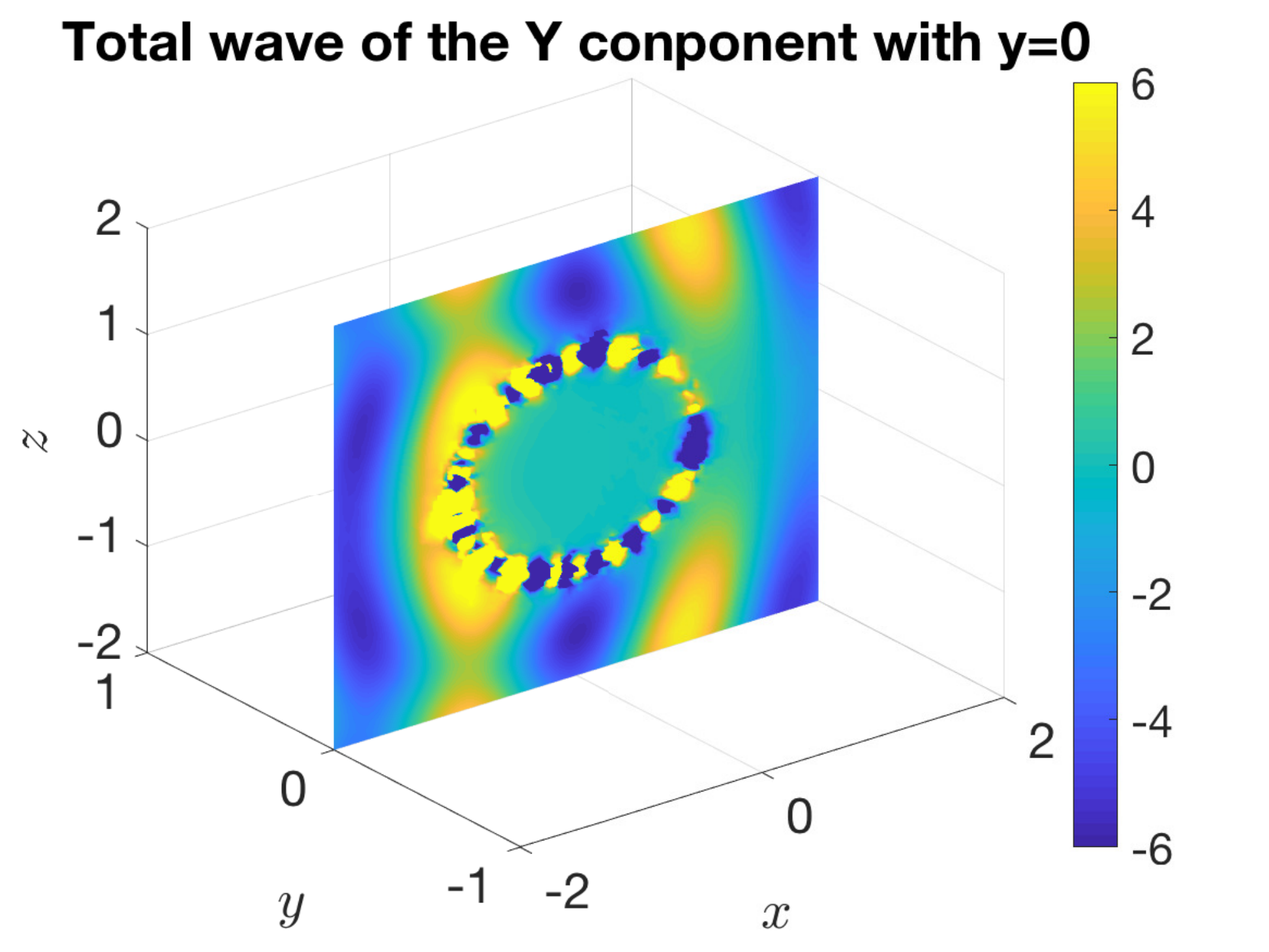}}
 {\includegraphics[width=4.4cm]{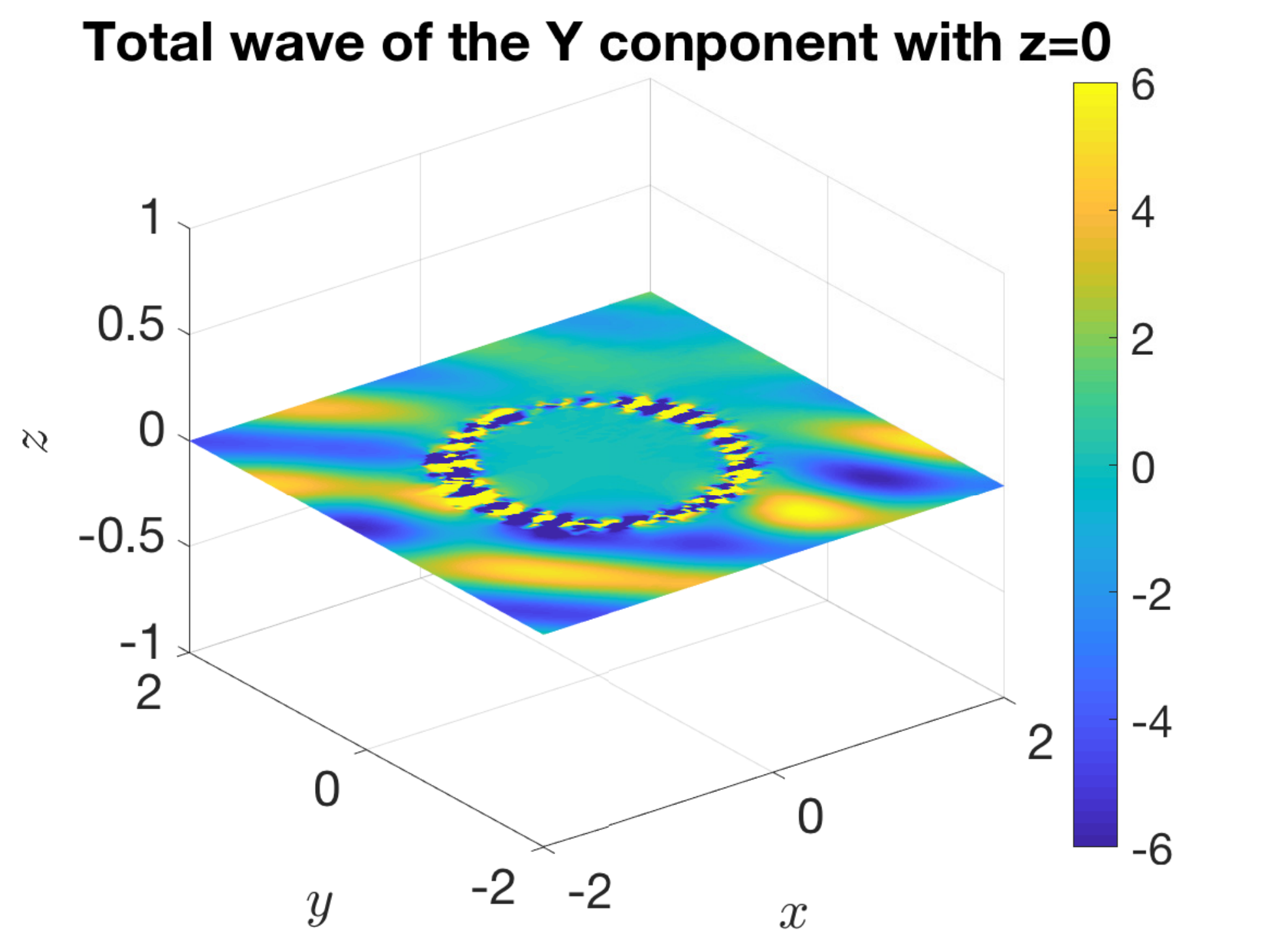}\\}
 {\includegraphics[width=4.4cm]{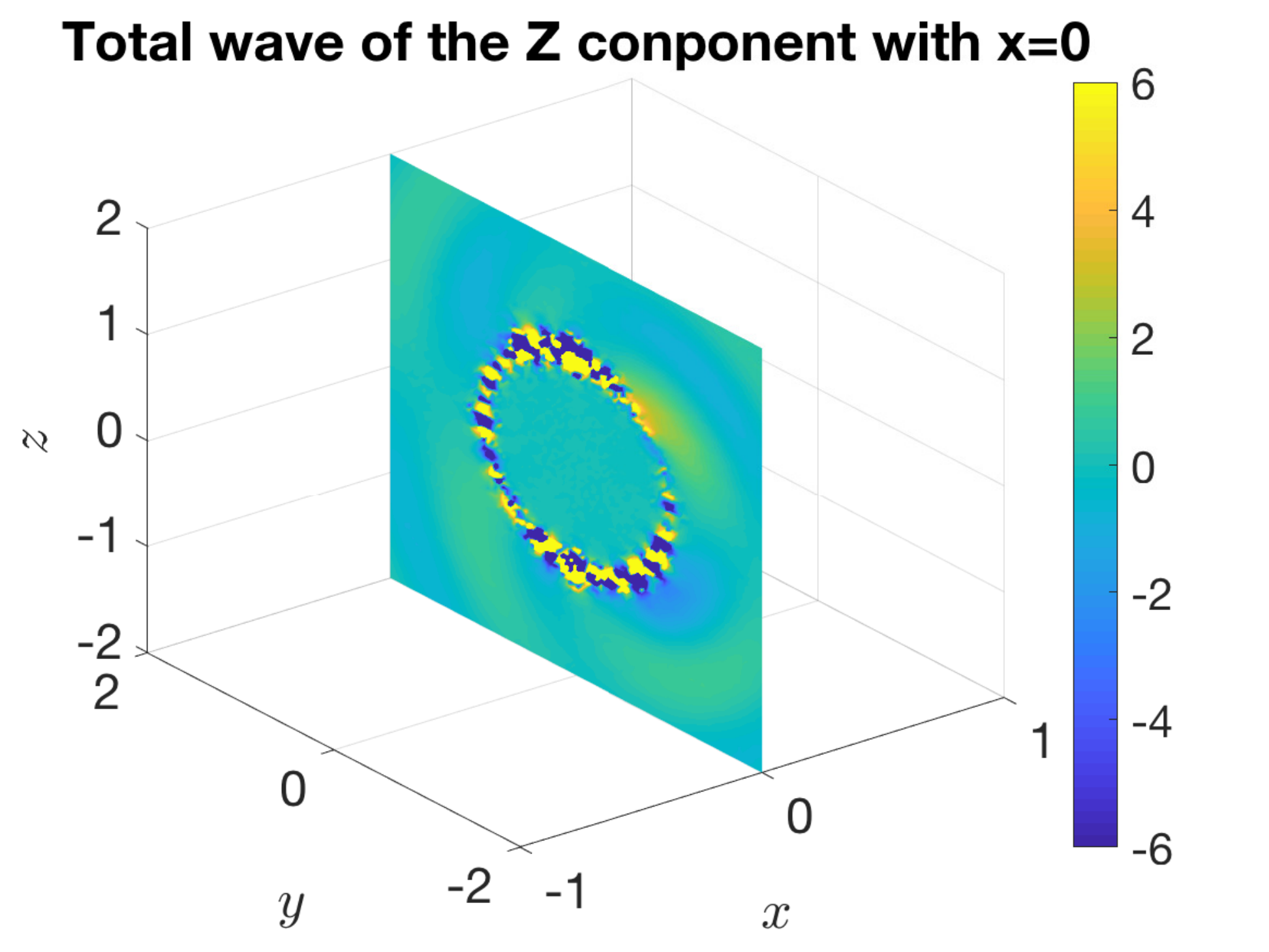}}
 {\includegraphics[width=4.4cm]{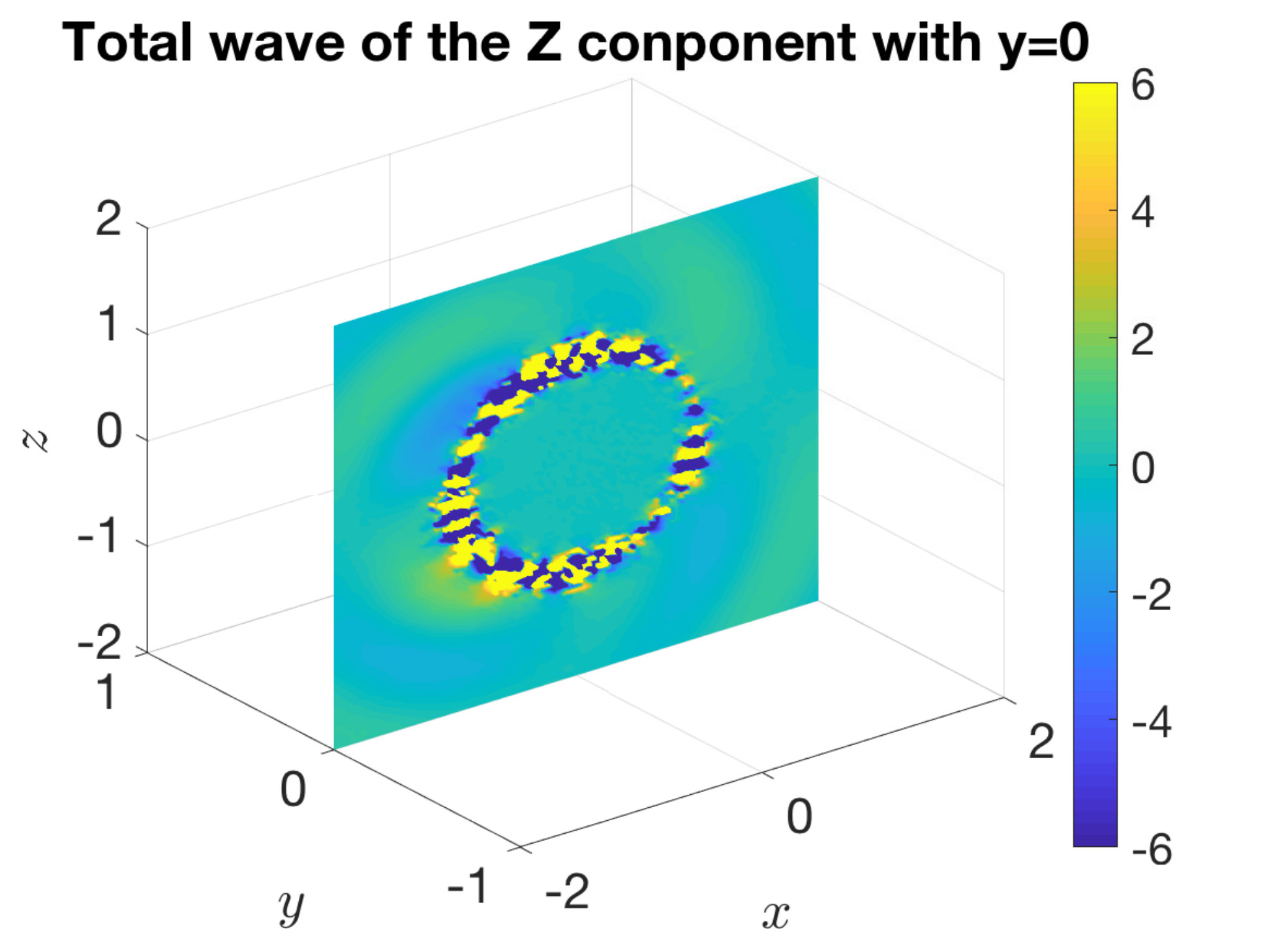}}
 {\includegraphics[width=4.4cm]{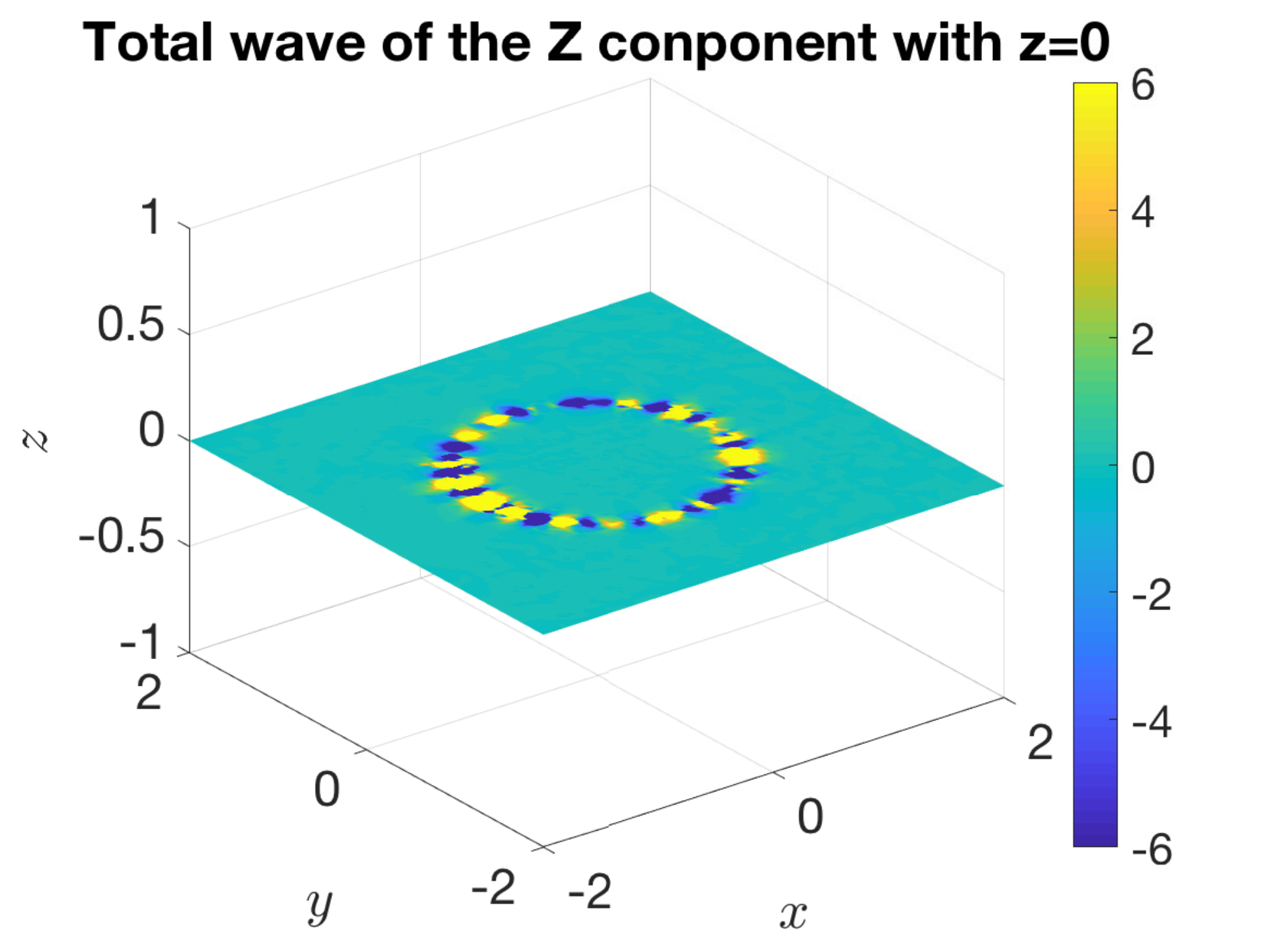}\\}
 \caption{Plotting of the first, second and third components of the resonant electric field corresponding to the electromagnetic configuration described in \eqref{eq:config1} and \eqref{eq:config2}.}
\end{figure}

Next, by imposing a certain restriction on the incident fields, we can construct a general class of plasmonic structures that can induce surface plasmon resonance. To that end, we first note that for $n\gg 1$, the spherical Bessel and Hankel functions $j_n(t)$ and $h_n^{(1)}(t)$ have the following asymptotic properties
\begin{equation}\label{eq:asymptotic_j}
  j_n(t)=\frac{t^n}{1\cdot 3 \cdots (2n+1)}\left(1+\mathcal{O}\left(\frac{1}{n}\right) \right),
\end{equation}
and
\begin{equation}\label{eq:asymptotic_h}
  h_n^{(1)}(t)=\frac{1\cdot 3 \cdots (2n-1)}{\rmi t^{n+1}}\left(1+\mathcal{O}\left(\frac{1}{n}\right) \right).
\end{equation}
With the help of the asymptotic properties for $j_n(t)$ and $h_n^{(1)}(t)$ in \eqref{eq:asymptotic_j} and \eqref{eq:asymptotic_h}, one can show the following estimates for the parameters, $\lambda_{n,k,R}$, $\chi_{n,k,R}$, $m_{1,n}^k$, $m_{2,n}^k$, $l_{1,n}^k$ and $l_{2,n}^k$, given in Proposition \ref{thm:eigenvalue_k-star}, Lemmas \ref{prop:spec_m} and \ref{prop:spec_l} with $n\gg 1$,
\[
\begin{split}
& \lambda_{n,k,R}=\frac{1}{2(2n+1)}\left(1+\mathcal{O}\left(\frac{1}{n}\right) \right), \quad  \chi_{n,k,R}=\frac{-R}{2n+1}\left(1+\mathcal{O}\left(\frac{1}{n}\right) \right),\\
& m_{1,n}^k=\frac{1}{4n+2}\left(1+\mathcal{O}\left(\frac{1}{n}\right) \right), \hspace*{1.3cm} m_{2,n}^k=-\frac{1}{4n+2}\left(1+\mathcal{O}\left(\frac{1}{n}\right) \right),
\end{split}
\]
and
\[
 l_{1,n}^k=\frac{-Rk^2}{2n+1}\left(1+\mathcal{O}\left(\frac{1}{n}\right) \right), \quad l_{2,n}^k=-\frac{n(n+1)}{(2n+1)R} + \frac{Rk^2(4n^2+4n+3)}{8n^3+12n^2-2n-3}\left(1+\mathcal{O}\left(\frac{1}{n}\right) \right).
\]
Using the above estimates, one can further derive the following asymptotic expressions for the eigenvalues $\tau_{i,n}$ in \eqref{eq:spectrum1} for $n\gg 1$,
\begin{equation}\label{eq:aym_eigen}
 \tau_{i,n}=\tilde{\tau}_{i,n} \left(1+\mathcal{O}\left(\frac{1}{n}\right) \right), \quad i=1,2,3,4,
\end{equation}
where
\[
 \begin{split}
   \tilde{\tau}_{1,n}= & \frac{1}{2(2n+1)}\bigg( (n+1)\mu_c + n\mu_m + ((n+1)\epsilon_m + n\epsilon_c)\omega^2 - \\
     & \sqrt{((n+1)\mu_c + n\mu_m - ((n+1)\epsilon_m + n\epsilon_c)\omega^2)^2 -\frac{4(\epsilon_c\mu_c-\epsilon_m\mu_m)^2(4n^2+4n+3)\omega^4R^2}{4n^2+4n-3} } \bigg),\\
   \tilde{\tau}_{2,n}= & \frac{1}{2(2n+1)}\bigg( (n+1)\mu_c + n\mu_m + ((n+1)\epsilon_m + n\epsilon_c)\omega^2 + \\
     & \sqrt{((n+1)\mu_c + n\mu_m - ((n+1)\epsilon_m + n\epsilon_c)\omega^2)^2 -\frac{4(\epsilon_c\mu_c+\epsilon_m\mu_m)^2(4n^2+4n+3)\omega^4R^2}{4n^2+4n-3} } \bigg),\\
   \tilde{\tau}_{3,n}= & \frac{1}{2(2n+1)}\bigg( (n+1)\mu_m + n\mu_c + ((n+1)\epsilon_c + n\epsilon_m)\omega^2 - \\
     & \sqrt{((n+1)\mu_m + n\mu_c - ((n+1)\epsilon_c + n\epsilon_m)\omega^2)^2 -\frac{4(\epsilon_c\mu_c-\epsilon_m\mu_m)^2(4n^2+4n+3)\omega^4R^2}{4n^2+4n-3} } \bigg),
 \end{split}
\]
and
\[
 \begin{split}
   &\tilde{\tau}_{4,n}= \frac{1}{2(2n+1)}\bigg( (n+1)\mu_m + n\mu_c + ((n+1)\epsilon_c + n\epsilon_m)\omega^2 + \\
     & \sqrt{((n+1)\mu_m + n\mu_c - ((n+1)\epsilon_c + n\epsilon_m)\omega^2)^2 -\frac{4(\epsilon_c\mu_c-\epsilon_m\mu_m)^2(4n^2+4n+3)\omega^4R^2}{4n^2+4n-3} } \bigg).
 \end{split}
\]
Since the eigenfunctions $\{\bXi_{i,n}\}_{i=1}^4$ with $n\in\mathbb{N}$ given in the Theorem \ref{thm:spectrum_final}  are complete on $L^2_T(\mathbb{S})^2$, we can write the source term $\mathbf{F}$ in \eqref{eq:operator_eq} in the following form
\begin{equation}\label{eq:source_form}
\mathbf{F}=\sum_{i=1}^4\sum_{n=1}^{+\infty}f_{i,n}\bXi_{i,n}.
\end{equation}

We can show the following resonance result.

\begin{thm}\label{thm:reson_n}
Suppose the Newtonial potential $\mathbf{F}$ in \eqref{eq:operator_eq} has the representation in \eqref{eq:source_form}. Let $n_0\in\mathbb{N}$ be sufficiently large such that the spherical Bessel and Hankel functions $j_{n_0}(t)$ and $h_{n_0}^{(1)}(t)$ enjoy the asymptotic properties given in \eqref{eq:asymptotic_j} and \eqref{eq:asymptotic_h}. Let the parameter $\epsilon_c$ and $\mu_c$ inside the domain $D$ be chosen such that the following conditions are fulfilled:
\begin{equation}\label{eq:cf1}
\epsilon_c=-\epsilon_m  \quad \mbox{and} \quad  \Re\left( \mu_c+\mu_m \right) \geq 0,
\end{equation}
or
\begin{equation}\label{eq:cf2}
\mu_c=-\mu_m  \quad \mbox{and} \quad \Re\left( (\epsilon_c + \epsilon_m)\omega^2 \right) \geq 0. 
\end{equation}
If the Fourier coefficients $f_{1,n_0}\neq0$ or  $f_{3,n_0}\neq 0$, then surface plasmon resonance occurs for both the medium configurations \eqref{eq:cf1} and \eqref{eq:cf2}. Similarly, let the parameter $\epsilon_c$ and $\mu_c$ inside the domain $D$ be chosen such that the following conditions are fulfilled:
\begin{equation}\label{eq:cf3}
\epsilon_m=-\epsilon_c  , \quad \mbox{and} \quad  \Re\left( \mu_c+\mu_m \right) \leq 0,
\end{equation}
or
\begin{equation}\label{eq:cf4}
\mu_m=-\mu_c  \quad \mbox{and} \quad \Re\left( (\epsilon_c + \epsilon_m)\omega^2 \right) \leq 0. 
\end{equation}
If the Fourier coefficients $f_{2,n_0}\neq0$ or  $f_{4,n_0}\neq 0$, then surface plasmon resonance occurs for both the medium configurations \eqref{eq:cf3} and \eqref{eq:cf4}. 
\end{thm}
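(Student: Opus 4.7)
The strategy is to invoke Definition~\ref{def:reson} directly: under each of the four medium configurations, it suffices to exhibit a single index $i_0\in\{1,2,3,4\}$ for which $|\tau_{i_0,n_0}|\ll 1$, since the assumed nonvanishing of the corresponding Fourier coefficient then guarantees that this mode actually appears in the spectral expansion \eqref{eq:s1} of the density $\bm{\Phi}$. Because $n_0$ is taken large enough for the asymptotic formulas \eqref{eq:asymptotic_j}--\eqref{eq:asymptotic_h} to be valid, the identity \eqref{eq:aym_eigen} reduces the task to showing the same smallness for the elementary expressions $\tilde\tau_{i_0,n_0}$.

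For the first configuration \eqref{eq:cf1}, I would substitute $\epsilon_c=-\epsilon_m$ into the formulas for $\tilde\tau_{1,n}$ and $\tilde\tau_{3,n}$. Writing $P:=(n+1)\mu_c+n\mu_m$, this substitution collapses $(n+1)\epsilon_m+n\epsilon_c$ to the constant $\epsilon_m$ and yields $(\epsilon_c\mu_c-\epsilon_m\mu_m)^2=\epsilon_m^2(\mu_c+\mu_m)^2$, so the term subtracted inside the radical is bounded as $n\to\infty$ while $P$ grows linearly in $n$. The sign condition $\Re(\mu_c+\mu_m)\geq 0$ forces $\Re P\to+\infty$, so with the principal branch of the square root the factorization
\[
\sqrt{(P-\epsilon_m\omega^2)^2-Q}=(P-\epsilon_m\omega^2)\bigl(1+O(1/n)\bigr)
\]
holds, and a one-line Taylor expansion produces $\tilde\tau_{1,n_0}=\epsilon_m\omega^2/(2n_0+1)+O(1/n_0^2)\ll 1$. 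A parallel calculation, with the roles of $(\epsilon_c,\mu_c)$ and $(\epsilon_m,\mu_m)$ interchanged in the relevant places, gives the same bound for $\tilde\tau_{3,n_0}$; hence resonance follows whenever $f_{1,n_0}\neq 0$ or $f_{3,n_0}\neq 0$. Configuration \eqref{eq:cf2} is then handled by the same computation after formally exchanging $\mu$ with $\epsilon\omega^2$: the role of $P$ is now played by $P_{\epsilon}:=\bigl((n+1)\epsilon_m+n\epsilon_c\bigr)\omega^2$, and the hypothesis $\Re\bigl((\epsilon_c+\epsilon_m)\omega^2\bigr)\geq 0$ guarantees $\Re P_{\epsilon}\to+\infty$.

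For configurations \eqref{eq:cf3} and \eqref{eq:cf4} the reversed sign hypotheses send $\Re P$ (respectively $\Re P_{\epsilon}$) to $-\infty$, so the principal branch of the square root flips sign, giving $\sqrt{(P-\epsilon_m\omega^2)^2-Q}=-(P-\epsilon_m\omega^2)+O(1/n)$. The cancellation of the linearly growing term then takes place inside $\tilde\tau_{2,n_0}$ and $\tilde\tau_{4,n_0}$ rather than inside $\tilde\tau_{1,n_0}$ and $\tilde\tau_{3,n_0}$, again leaving an $O(1/n_0)$ residue; this matches the stated hypothesis $f_{2,n_0}\neq 0$ or $f_{4,n_0}\neq 0$. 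In all four scenarios the small eigenvalue paired with a nonvanishing Fourier coefficient fulfils Definition~\ref{def:reson}.

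The principal obstacle is precisely this branch selection for the complex square root appearing in \eqref{eq:aym_eigen}, since $\epsilon_c$ and $\mu_c$ may have negative real parts and nonzero imaginary parts. The role of the sign conditions in \eqref{eq:cf1}--\eqref{eq:cf4} is exactly to ensure that, for $n_0$ sufficiently large, the argument of $\sqrt{\cdot}$ lies safely inside a half-plane where the principal branch agrees with the obvious asymptotic factorization with an unambiguous sign. Pinning this down rigorously, rather than heuristically, is what the proof must do carefully; once the correct branch is fixed, the remaining smallness estimates and the final appeal to Definition~\ref{def:reson} are routine.
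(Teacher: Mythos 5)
Your proposal follows exactly the paper's argument: invoke the asymptotic formula \eqref{eq:aym_eigen}, substitute $\epsilon_c=-\epsilon_m$ (respectively $\mu_c=-\mu_m$) so that the $\epsilon$- (resp.\ $\mu$-) terms collapse to $O(1)$, use the sign hypothesis to pin the branch of the square root so that the linearly growing $\mu$-term (resp.\ $\epsilon$-term) cancels, and conclude $\tilde\tau_{i,n_0}=\mathcal{O}(1/n_0)$, which with $f_{i,n_0}\neq 0$ fulfils Definition~\ref{def:reson}. Your treatment of the complex branch selection is actually spelled out more carefully than in the paper (which simply writes $\sqrt{((\mu_c+\mu_m)-(\epsilon_c+\epsilon_m)\omega^2)^2}$ and asserts the cancellation), but the route is the same.
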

\begin{proof}
From the expression of $\tau_{1,n_0}$ in \eqref{eq:aym_eigen}, one can show by direct calculations that
\[
 \begin{split}
   &\tilde{\tau}_{1,n_0}=  \frac{1}{2(2{n_0}+1)}\bigg( (n_{0}+1)\mu_c + n_0\mu_m + ((n_0+1)\epsilon_m + n_0\epsilon_c)\omega^2 - \\
     & \sqrt{((n_0+1)\mu_c + n_0\mu_m - ((n_0+1)\epsilon_m + n_0\epsilon_c)\omega^2)^2 -\frac{4(\epsilon_c\mu_c-\epsilon_m\mu_m)^2(4n_0^2+4n_0+3)\omega^4R^2}{4n_0^2+4n_0-3} } \bigg)\\
    &\qquad =\frac{1}{4}\big((\mu_c+\mu_m) + (\epsilon_c + \epsilon_m)\omega^2 - \sqrt{((\mu_c+\mu_m) -(\epsilon_c + \epsilon_m)\omega^2)^2} \big) +\mathcal{O}\left(\frac{1}{n_0}\right).
 \end{split}
\]
If 
\[
\epsilon_m=-\epsilon_c \quad \mbox{and} \quad  \Re\left( \mu_c+\mu_m \right) \geq 0,
\]
one can find that 
\[
  \tilde{\tau}_{1,n_0}= \mathcal{O}\left(\frac{1}{n_0}\right).
\]
Clearly, one has resonance for this case since $n_0\gg 1$. For the other case with
\[
\mu_m=-\mu_c  \quad \mbox{and} \quad \Re\left( (\epsilon_c + \epsilon_m)\omega^2 \right) \geq 0 ,
\]
one can show by following a similar argument that
\[
   \tilde{\tau}_{1,n_0}= \mathcal{O}\left(\frac{1}{n_0}\right),
\]
and hence resonance occurs. The occurrence of resonance for the medium configurations \eqref{eq:cf3} and \eqref{eq:cf4} can be shown in a similar manner with the help of the asymptotic expressions in \eqref{eq:aym_eigen}. 

The proof is complete. 
\end{proof}

\begin{cor}
 By Theorem \ref{thm:reson_n}, it is readily seen that if the parameters $\epsilon_c$ and $\mu_c$ inside the domain $D$ are chosen as 
 \[
 \epsilon_c=-\epsilon_m \quad \mbox{and} \quad \mu_c=-\mu_m,
 \]
 and the Fourier coefficients $f_{i,n}$ for $\mathbf{F}$ in \eqref{eq:source_form} is non-vanishing for some $i\in\{1,2,3,4\}$ and a certain $n=n_0\gg 1$, then plasmon resonance occurs. 
 \end{cor}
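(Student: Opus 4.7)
The plan is to verify that the joint parameter choice $\epsilon_c=-\epsilon_m$ and $\mu_c=-\mu_m$ simultaneously fulfils every one of the four medium configurations \eqref{eq:cf1}--\eqref{eq:cf4} appearing in Theorem~\ref{thm:reson_n}. Once this observation is made, the corollary reduces to a case-by-case appeal to that theorem, chosen according to which Fourier coefficient $f_{i,n_0}$ happens to be non-vanishing.

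First I would record the elementary identities
\[
 \mu_c+\mu_m=0\quad\mbox{and}\quad (\epsilon_c+\epsilon_m)\omega^2=0,
\]
which are immediate from the hypotheses. In particular $\Re(\mu_c+\mu_m)=0$ and $\Re\bigl((\epsilon_c+\epsilon_m)\omega^2\bigr)=0$, so each of these quantities lies simultaneously in $[0,\infty)$ and in $(-\infty,0]$. Together with the equalities $\epsilon_c=-\epsilon_m$ and $\mu_c=-\mu_m$, this shows that every one of \eqref{eq:cf1}, \eqref{eq:cf2}, \eqref{eq:cf3}, \eqref{eq:cf4} is satisfied at once.

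Next I would split into four cases according to the given non-vanishing Fourier coefficient. If $f_{1,n_0}\neq 0$ or $f_{3,n_0}\neq 0$, I would invoke the first assertion of Theorem~\ref{thm:reson_n} via configuration \eqref{eq:cf1} (or equivalently \eqref{eq:cf2}) to conclude that the corresponding $|\tau_{i,n_0}|=\mathcal{O}(1/n_0)$; if instead $f_{2,n_0}\neq 0$ or $f_{4,n_0}\neq 0$, I would invoke the second assertion via configuration \eqref{eq:cf3} (or equivalently \eqref{eq:cf4}), again yielding $|\tau_{i,n_0}|=\mathcal{O}(1/n_0)$. In every case the smallness $|\tau_{i,n_0}|\ll 1$ combined with $f_{i,n_0}\neq 0$ meets the criterion of Definition~\ref{def:reson}, and surface plasmon resonance follows.

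There is no real obstacle, since the corollary is a direct specialisation of Theorem~\ref{thm:reson_n}; the only point worth checking is that the cancellations $\mu_c+\mu_m=0$ and $(\epsilon_c+\epsilon_m)\omega^2=0$ feed through the asymptotic formulas \eqref{eq:aym_eigen} to force $\tilde{\tau}_{i,n_0}=\mathcal{O}(1/n_0)$ in each of the four branches, which is exactly the mechanism exploited within the proof of Theorem~\ref{thm:reson_n}.
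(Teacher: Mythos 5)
Your proof is correct and takes essentially the same approach the paper intends: the paper gives no explicit argument beyond invoking Theorem~\ref{thm:reson_n}, and your observation that $\mu_c+\mu_m=0$ and $(\epsilon_c+\epsilon_m)\omega^2=0$ make all four conditions \eqref{eq:cf1}--\eqref{eq:cf4} hold simultaneously, followed by the case split on which $f_{i,n_0}$ is non-vanishing, is precisely the ``readily seen'' deduction the authors have in mind.
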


Since we are considering the electromagnetic scattering in the finite-frequency regime, it is more practical for the plasmon parameters to be constructed according to the Drude model (cf. \cite{ADM}), which states that
\begin{equation}\label{eq:drude}
\begin{split}
 \epsilon_c&=\epsilon_0\left( 1-\frac{\omega_p^2}{\omega(\omega+\rmi\tau)} \right),\\
 \mu_c&=\mu_0\left( 1-\mathcal{F}\frac{\omega^2}{\omega^2-\omega_0^2+\rmi\tau\omega} \right),
 \end{split}
\end{equation}
where $\omega_p$ is the plasmon frequency of the bulk material, $\tau>0$ is the object's damping coefficient, $\mathcal{F}$ is a filling factor and $\omega_0$ is a localized plasmon resonant frequency. In the rest of the section, we show that by following a similar strategy as before via the use of the spectral result in Theorem~\ref{thm:spectrum_final}, one can construct the desired plasmonic structures according to the Drude model. Indeed,  if we take
\[
 \epsilon_0=\mu_0=1,\quad \omega=5,\quad \tau=0.0001, \quad \omega_0=2,\quad \omega_p^2=51.0045 \ \ \ \mbox{and}\quad \mathcal{F}=0,
\]
then by straightforward calculations one can obtain that 
\[
 \epsilon_c=-1.04018+0.00004\rmi \quad \mbox{and} \quad \mu_c=1,
\]
which is exactly the one constructed earlier in \eqref{eq:config1}. 
Let us consider another case with the filling factor $\mathcal{F}\neq 0$, and set
\begin{equation}\label{eq:config6}
 \epsilon_0=\mu_0=1,\quad \omega=5,\quad \tau=0.00001, \quad\omega_0=2, \quad \omega_p^2=51.518 \quad \mbox{and}\quad \mathcal{F}=0.1. 
\end{equation}
Associated with \eqref{eq:config6}, one has by direct calculations that 
\begin{equation}\label{eq:config7}
 \epsilon_c=-1.06072 + 4.1\times10^{-6}\rmi \quad \mbox{and} \quad \mu_c=0.880952+2.8\times10^{-7}\rmi.
\end{equation}
If the incident electric field is taken to be \eqref{eq:config2} again, one can show that surface plasmon resonance occurs. We plot the corresponding resonant electric field in Fig.~2 and one can readily see the surface plasmon resonance phenomenon.  

\begin{figure}\label{fig:reson_drude}
  \centering
 {\includegraphics[width=4.4cm]{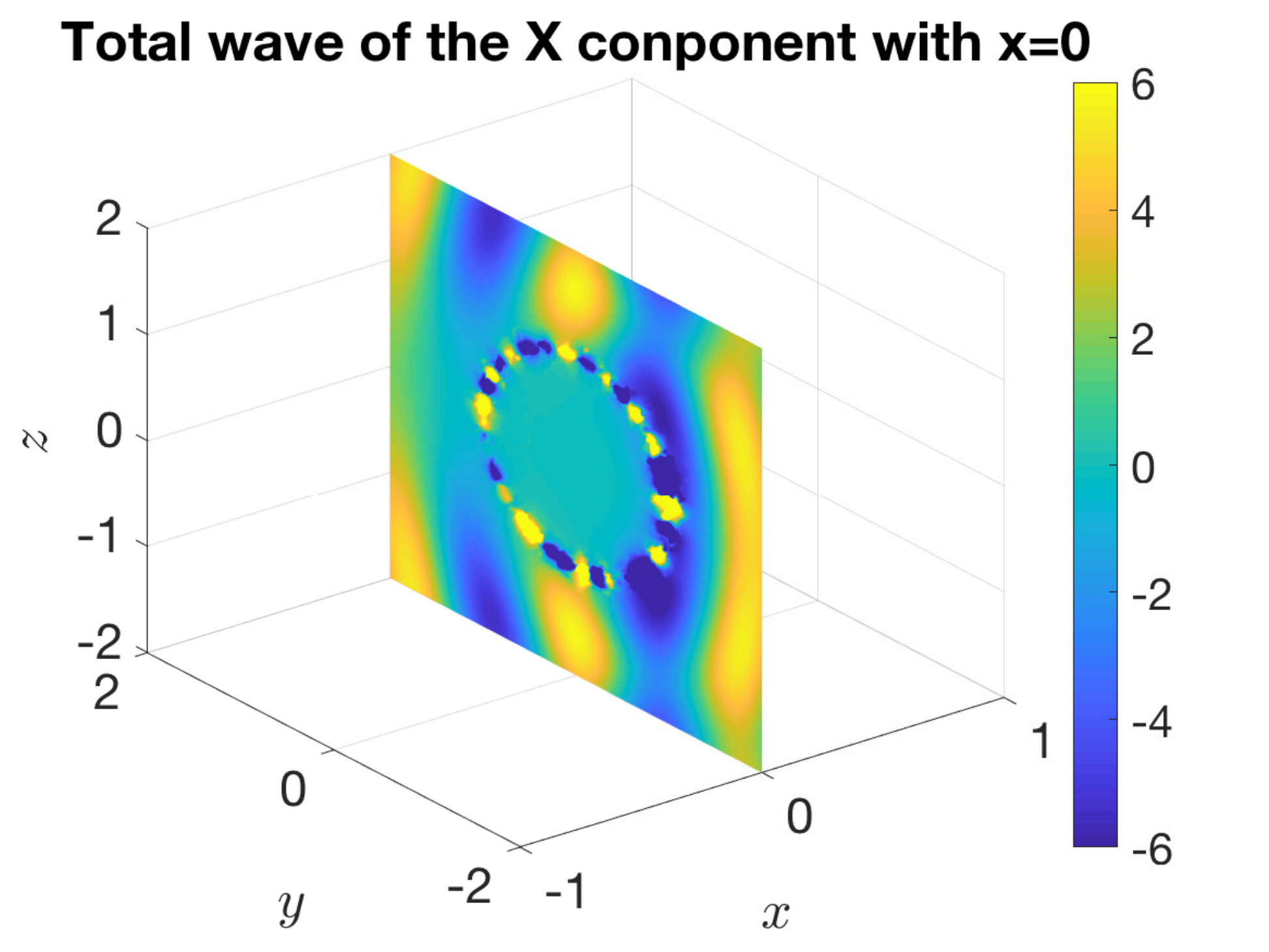}}
 {\includegraphics[width=4.4cm]{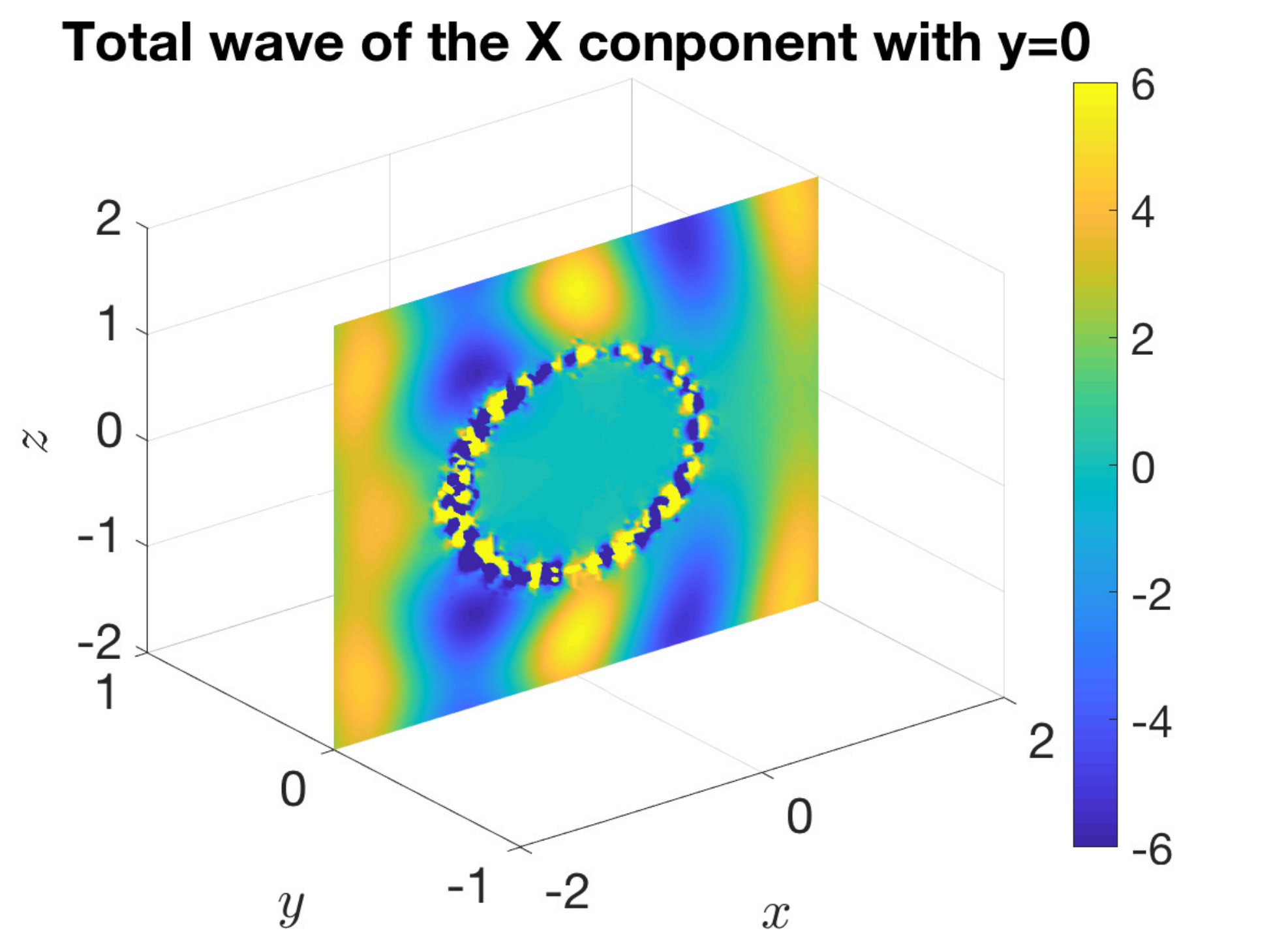}}
 {\includegraphics[width=4.4cm]{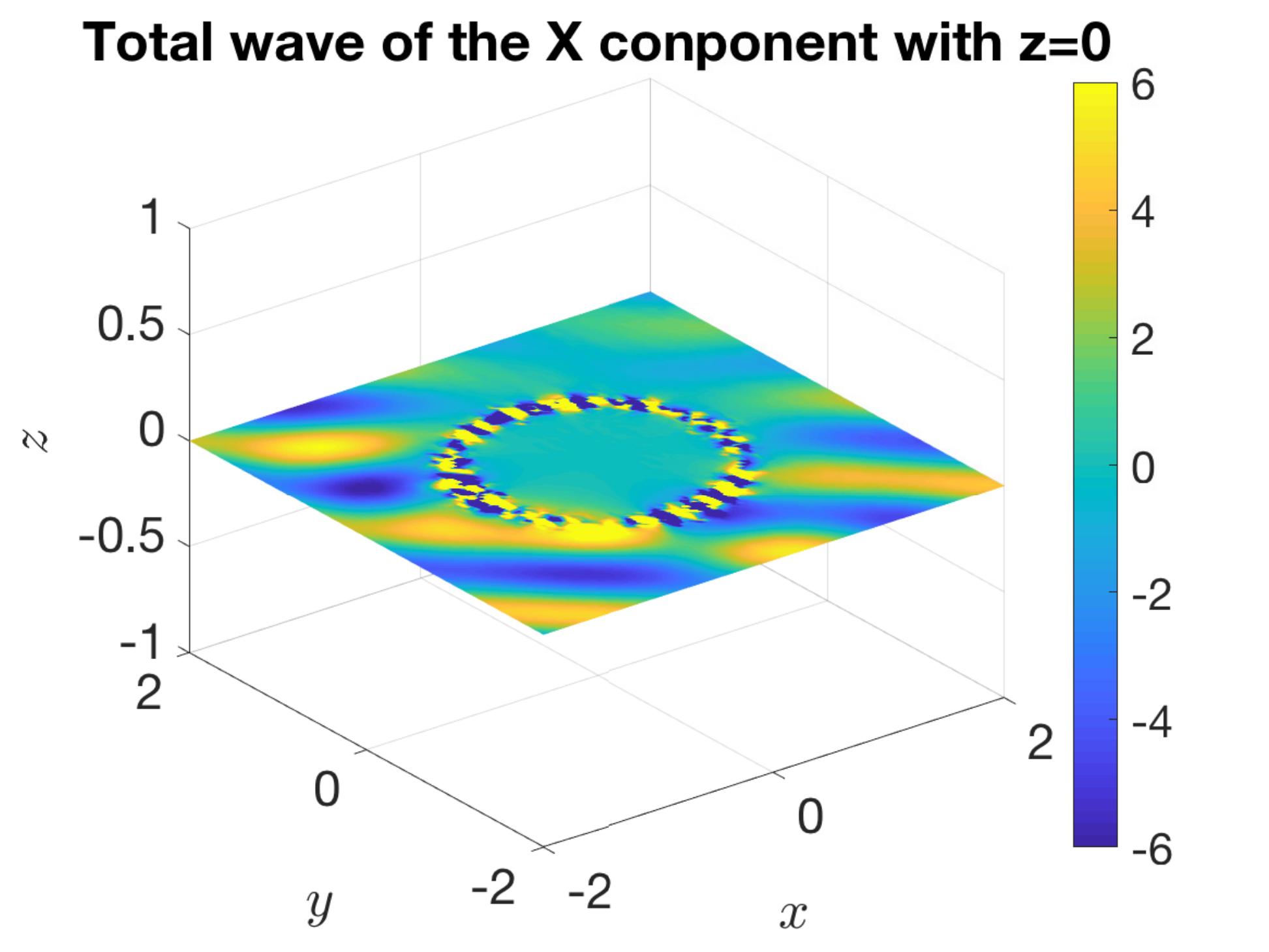}\\}
{\includegraphics[width=4.4cm]{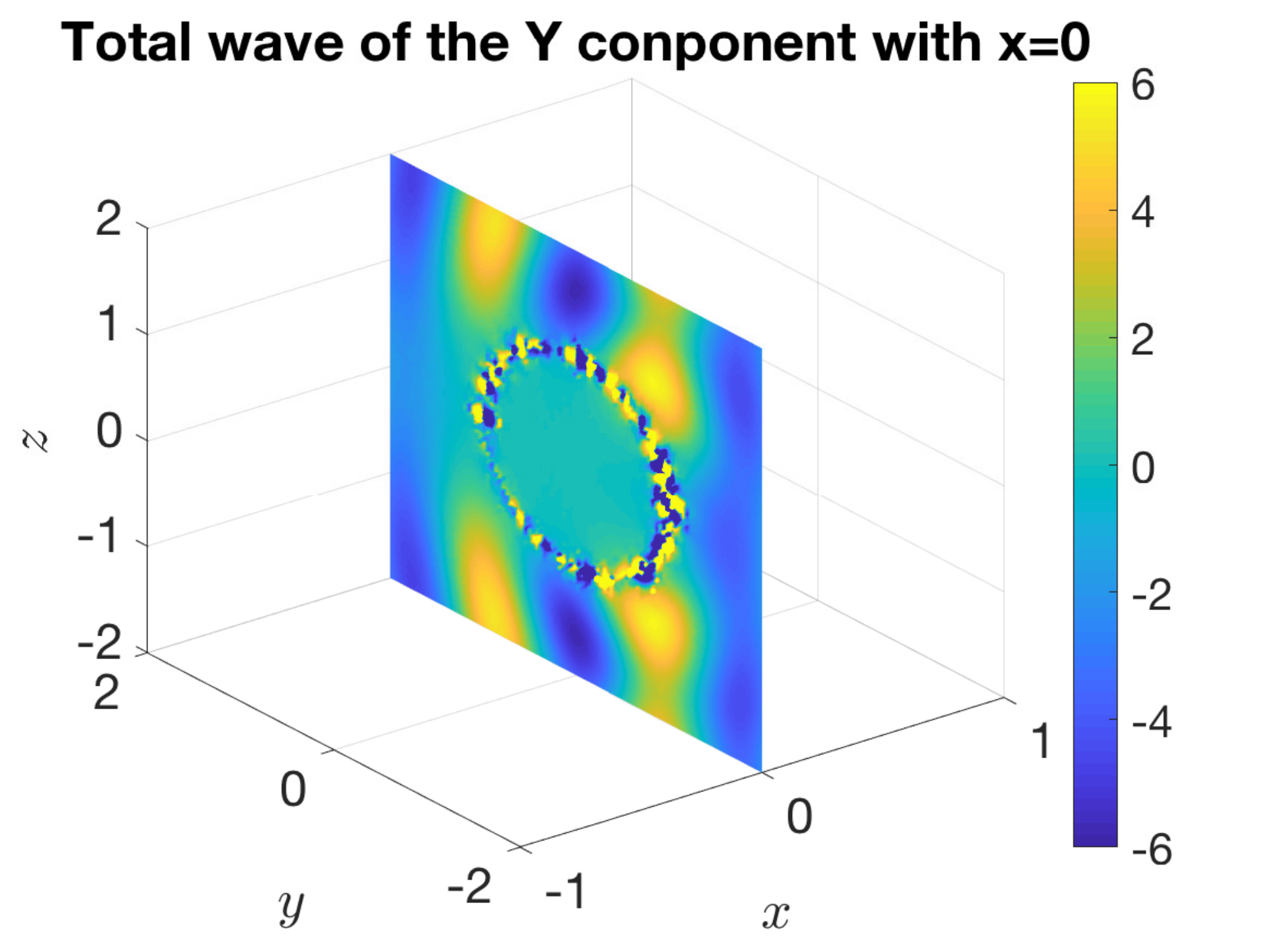}}
 {\includegraphics[width=4.4cm]{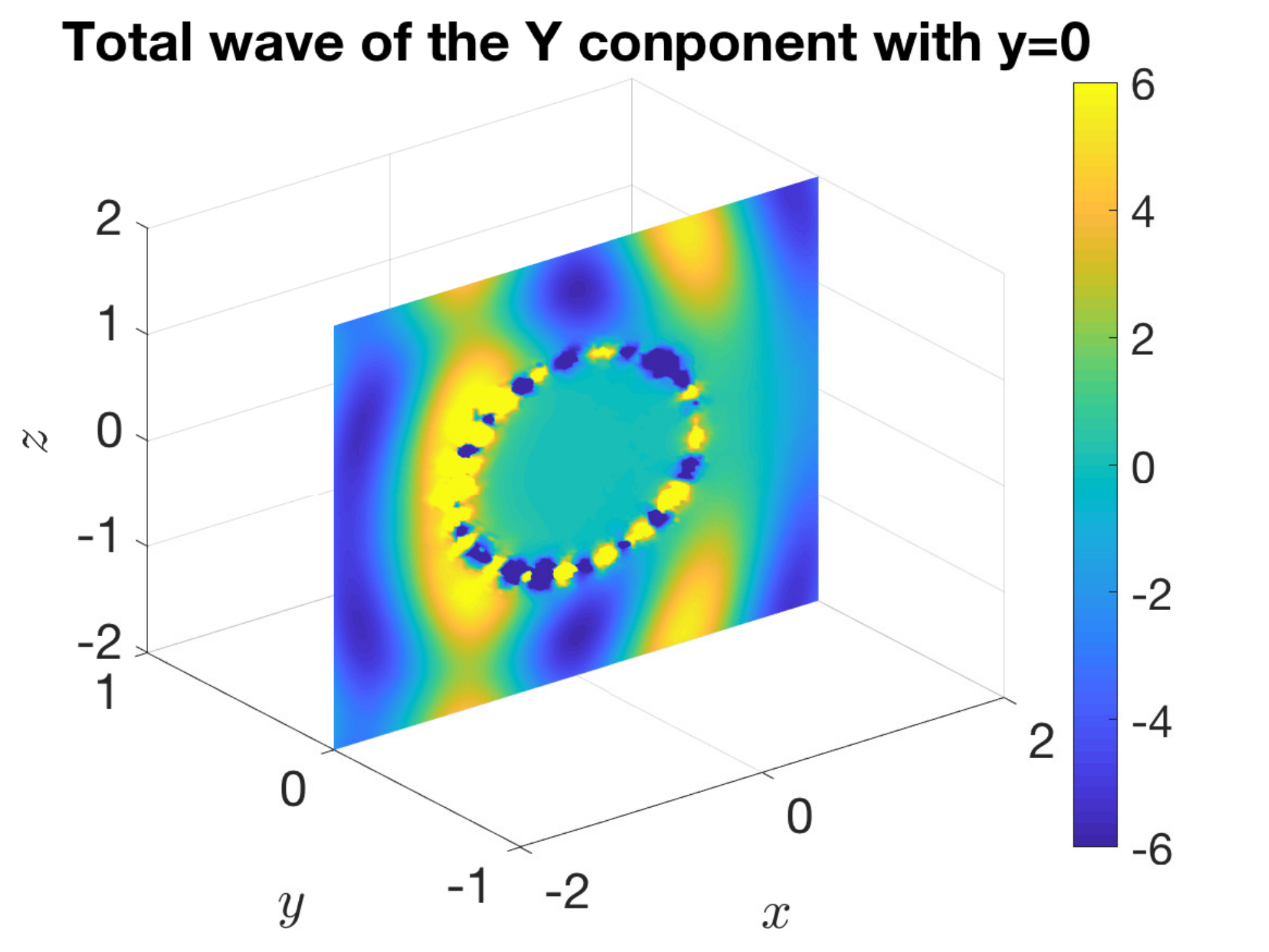}}
 {\includegraphics[width=4.4cm]{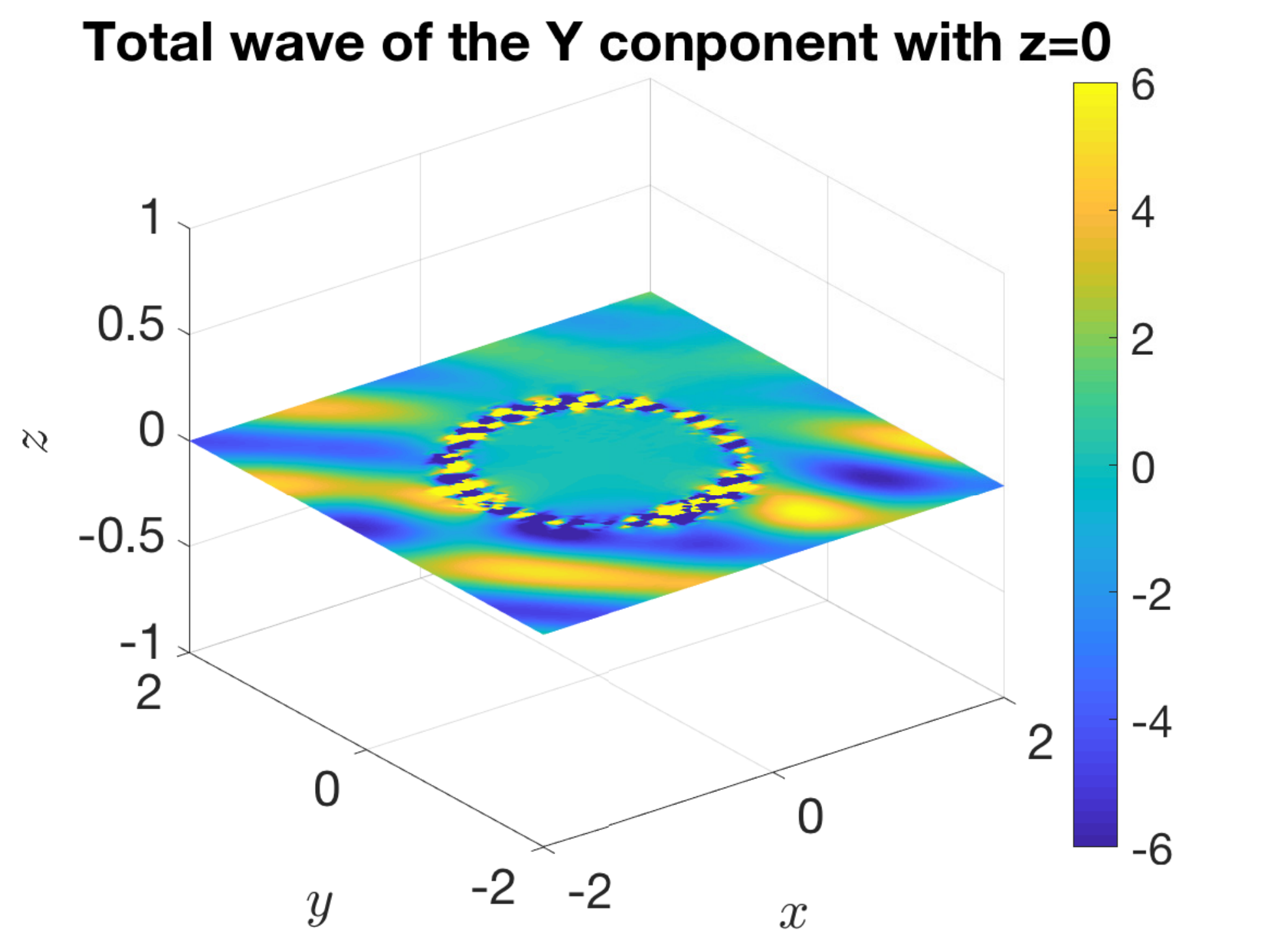}\\}
 {\includegraphics[width=4.4cm]{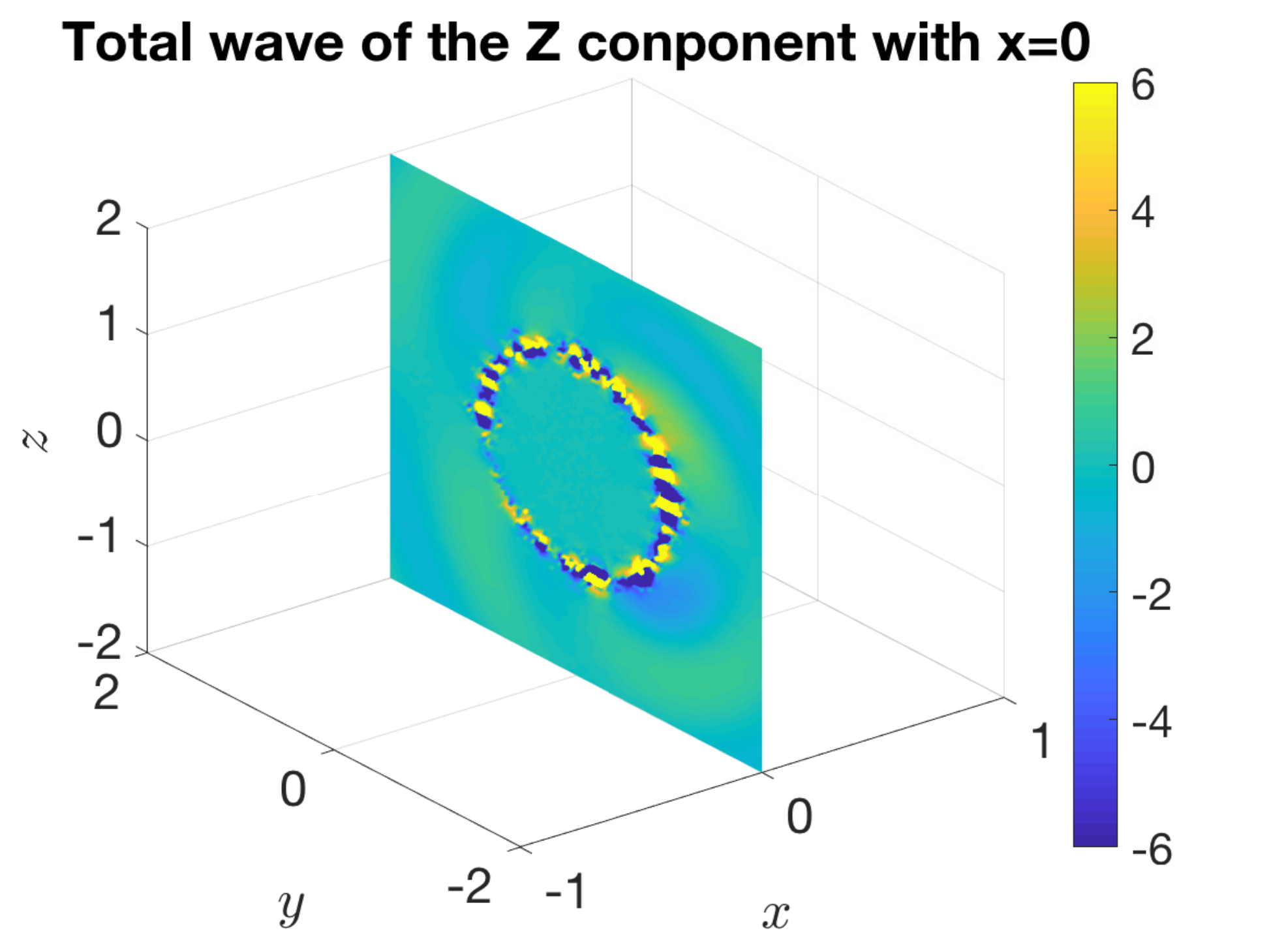}}
 {\includegraphics[width=4.4cm]{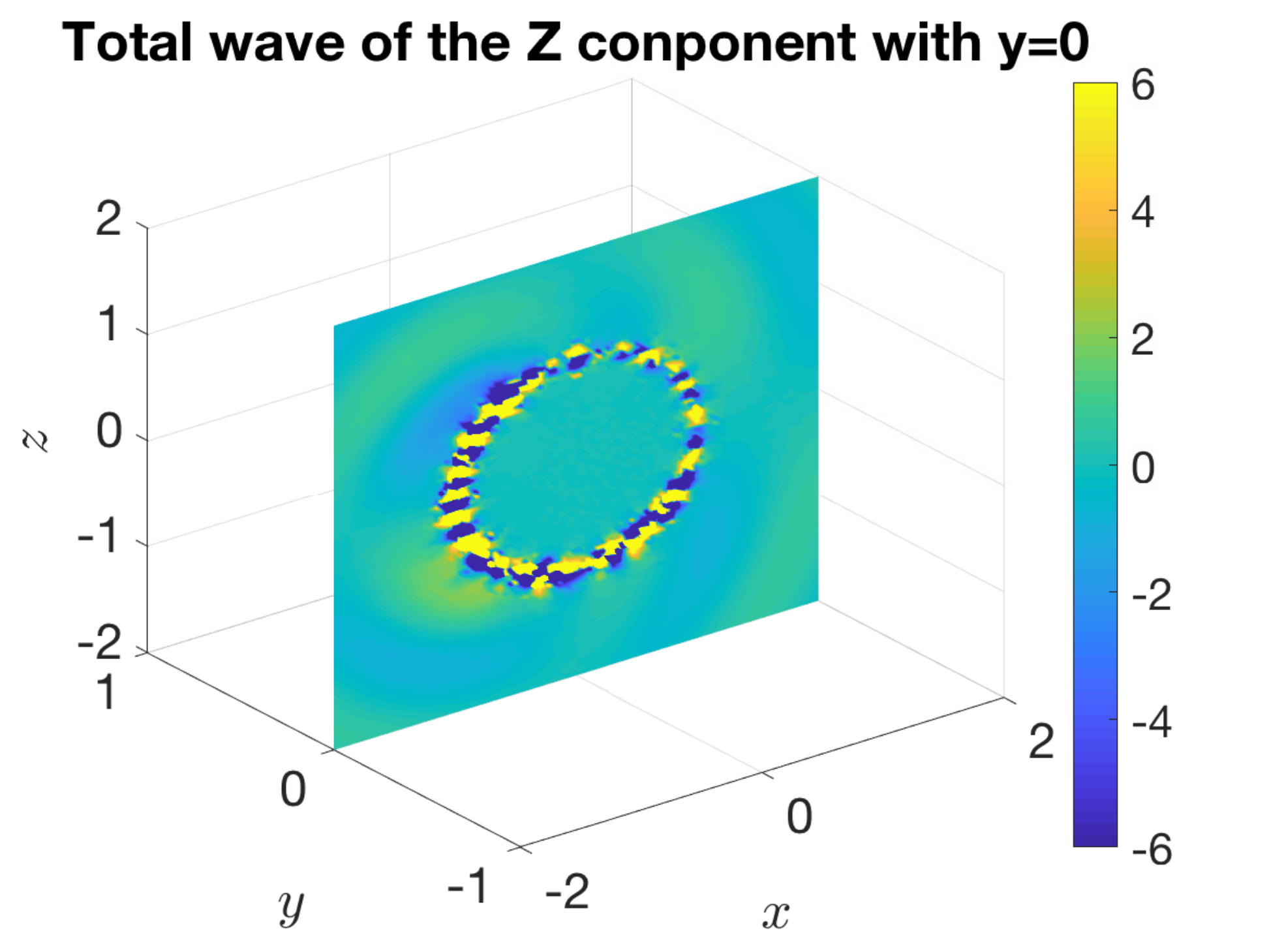}}
 {\includegraphics[width=4.4cm]{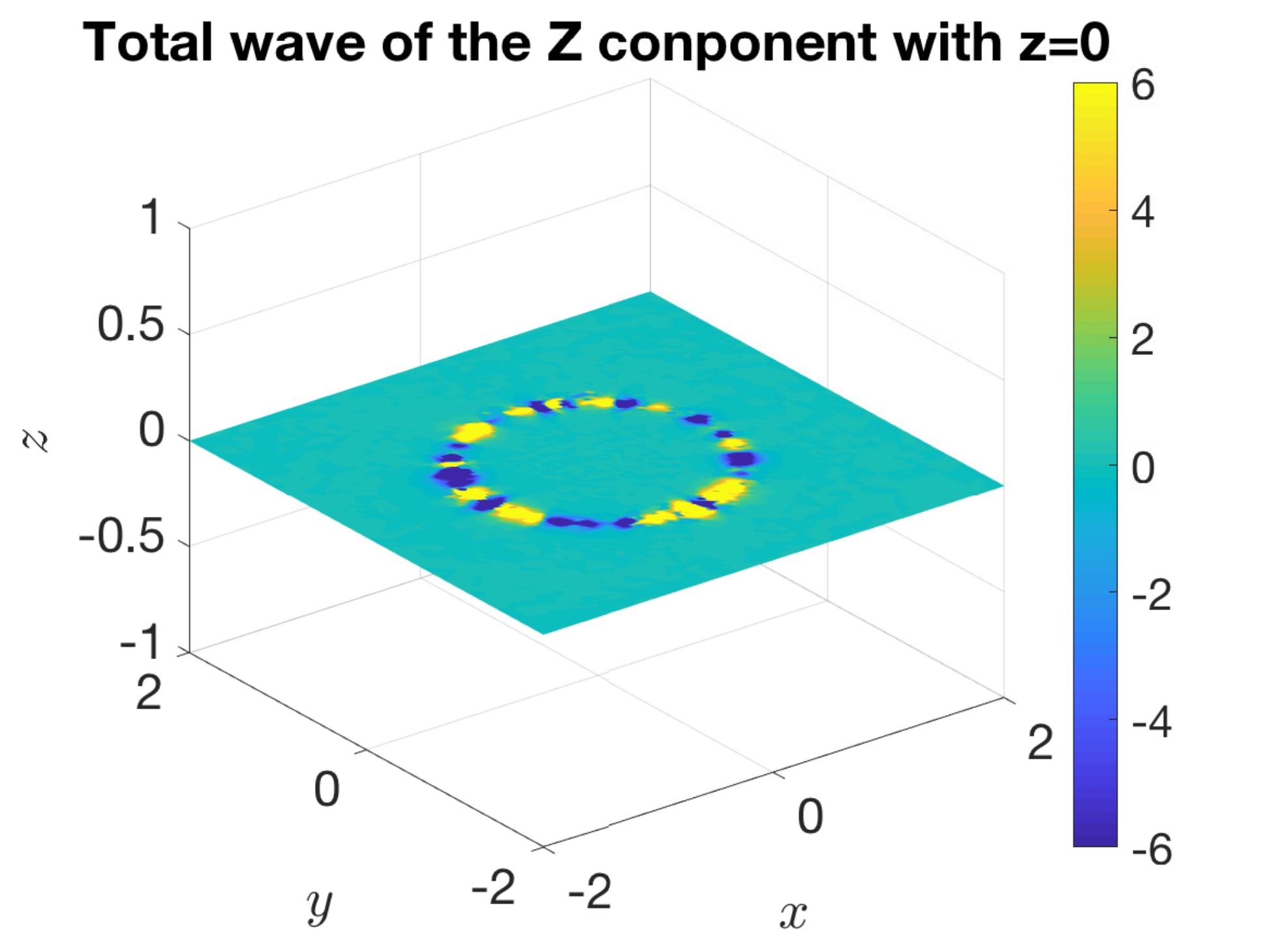}\\}
  \caption{Plotting of the first, second and third components of the resonant electric field corresponding to the electromagnetic configuration described in \eqref{eq:config7} and \eqref{eq:config2}.}
\end{figure}

\section{Invisibility cloaking effect}

In this section, we construct plasmonic structures that can induce the cloaking effect associated to certain incident waves. By Definition~\ref{def:invi} as well as the discussion made at the end of Section 2, one needs to design the electric permittivity $\epsilon_c$ and the magnetic permeability $\mu_c$ in the domain $D$ such that all those eigenvalues of the associated operator $\mathbb{I}+\mathbb{K}$ that correspond to the non-vanishing Fourier coefficients of the source $\mathbf{F}$, through the relation \eqref{eq:s1}, should be large enough. That means the plasmonic structures and the incident waves are related to each other and this makes the corresponding construction rather delicate and tricky. 

Similar to our study for the surface plasmon resonance in the previous section, we first present some specific constructions as well as the corresponding numerical simulations for illustrating the cloaking effects. Let
\begin{equation}\label{eq:ca1}
 R=1, \quad \omega=5 ,\quad  \mu_m=\epsilon_m=1, \quad \mu_c=1 \quad \mbox{and} \quad \epsilon_c=-6.55806+0.000001\rmi,
\end{equation}
and associated with such a configuration, one has from \eqref{eq:spectrum1} that
\begin{equation}\label{eq:ca2}
  |\tau_{3,1}|\gg1.
 \end{equation}
We take the incident electric field to be
\begin{equation}\label{eq:cloaking_source}
\bE^i =  \left[
\begin{array}{c}
\bE_x^i\\
\bE_y^i\\
\bE_z^i\\
\end{array}
\right]
  =\left[
                 \begin{array}{c}
                  \frac{100y}{\omega\sqrt{x^2+y^2+z^2}}
                   \left(\frac{\sin\left(\omega\sqrt{x^2+y^2+z^2}\right)}{\omega^2(x^2+y^2+z^2)} - \frac{\cos\left(\omega\sqrt{x^2+y^2+z^2}\right)}{\omega\sqrt{x^2+y^2+z^2}} \right)  \\
                  \frac{-100x}{\omega\sqrt{x^2+y^2+z^2}} \left(\frac{\sin\left(\omega\sqrt{x^2+y^2+z^2}\right)}{\omega^2(x^2+y^2+z^2)} - \frac{\cos\left(\omega\sqrt{x^2+y^2+z^2}\right)}{\omega\sqrt{x^2+y^2+z^2}} \right) \\
                    0
                 \end{array}
               \right],
\end{equation}
whose trace on $\partial D$ corresponds to an eigenfunction of the eigenvalue $\tau_{3,1}$ in \eqref{eq:ca2}. Hence, the conditions in Definition~\ref{def:invi} are fulfilled, and one should have cloaking effect associated with the configuration described in \eqref{eq:ca1}--\eqref{eq:cloaking_source}. In Fig.~3, we present the slice plotting of the incident, total and scattered electric fields associated with the configuration described in \eqref{eq:ca1}--\eqref{eq:cloaking_source} for illustration of the cloaking effect. It is noted that the $z$-component of the wave field is zero and hence it is not plotted. One can readily see that the scattered field outside the plasmonic inclusion is nearly vanishing and hence the plasmonic inclusion is nearly invisible under such a wave impinging. Furthermore, we next consider the cloaking effect if there is an inhomogeneous inclusion located inside the plasmonic structure. Indeed, we let $B_{1/2}$ be a central ball of radius $1/2$ which embraces an inhomogeneous medium with $\epsilon=5$ and $\mu=1$. In Fig.~4, we present the slice plotting of the incident, total and scattered electric fields associated with the aforesaid configuration. Since the corresponding $z$-components are zero and we only plot the $x$- and $y$-components of the wave fields. It is interesting to note that not only the plasmonic structure but also the embedded inclusion are nearly invisible under such a wave impinging. The latter cloaking effect could also be rigorously verified, but that would involve much lengthy and complicated analysis and we leave it for the future investigation. Next, we show that if one imposes a certain restrictive condition on the impinging wave fields, one can construct a large class of plasmonic structures that can induce cloaking effect. 
 
 \begin{figure}\label{fig:cloaking_xy}
  \centering
  {\includegraphics[width=4.4cm]{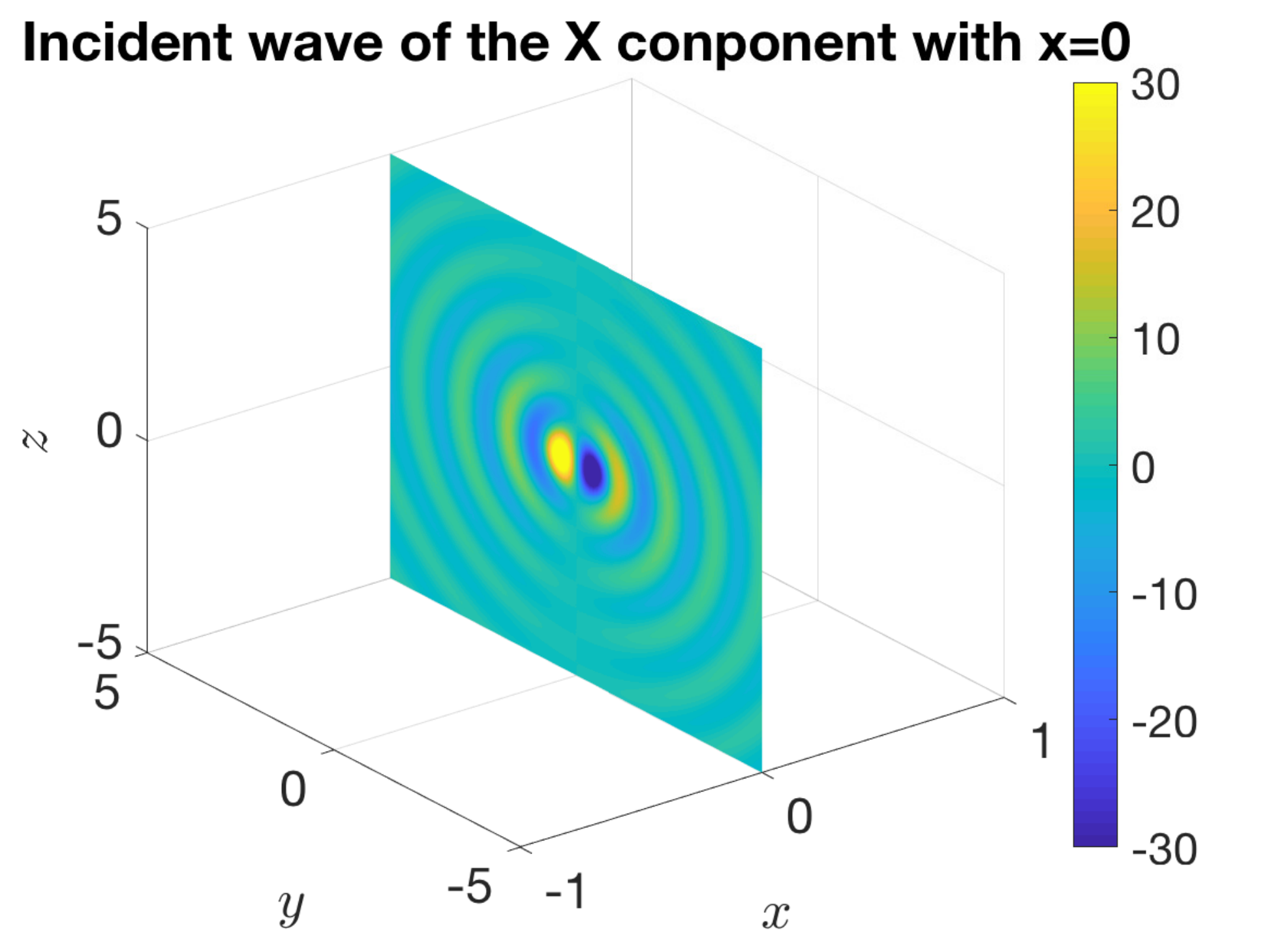}}
 {\includegraphics[width=4.4cm]{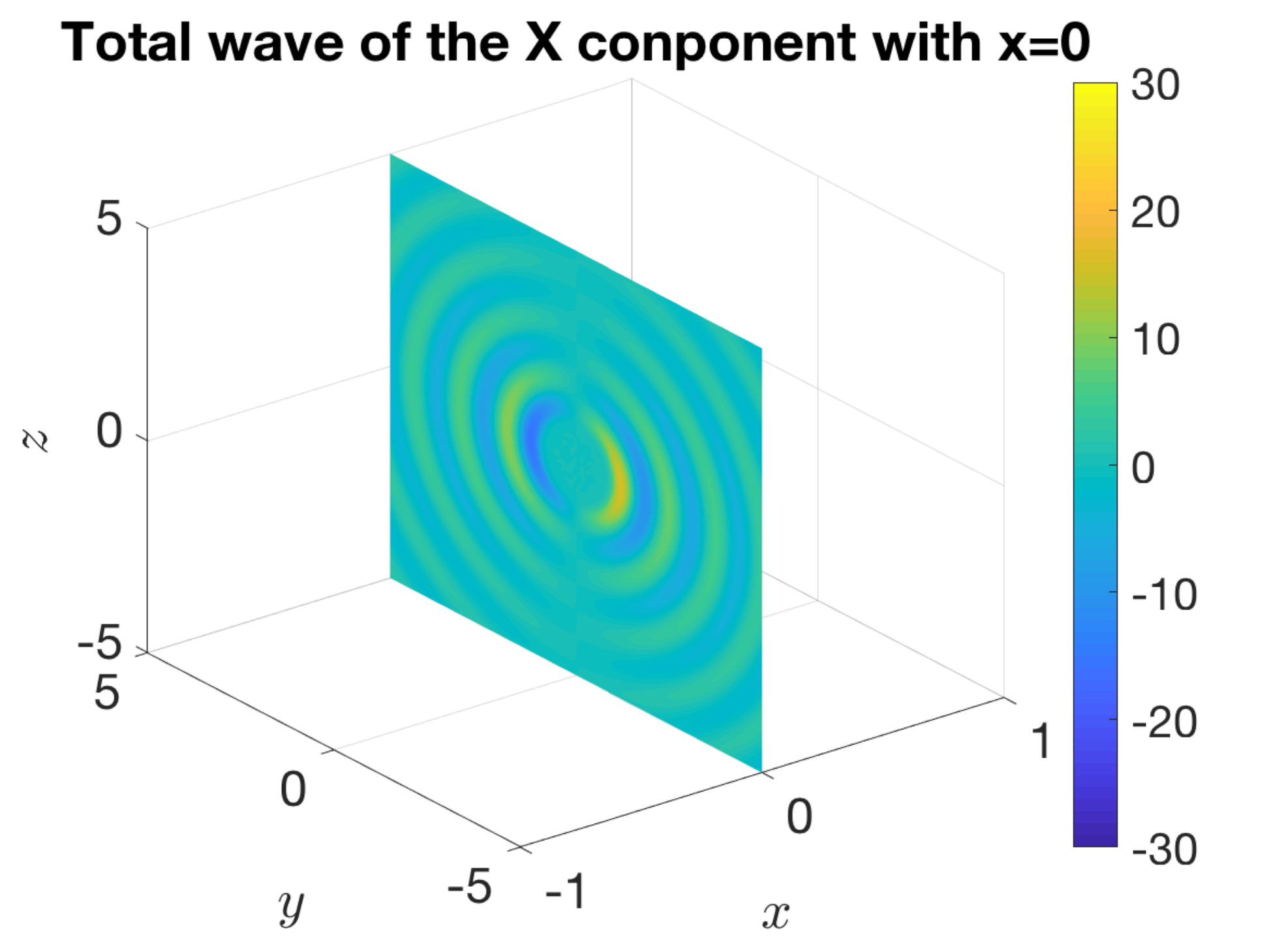}}
 {\includegraphics[width=4.4cm]{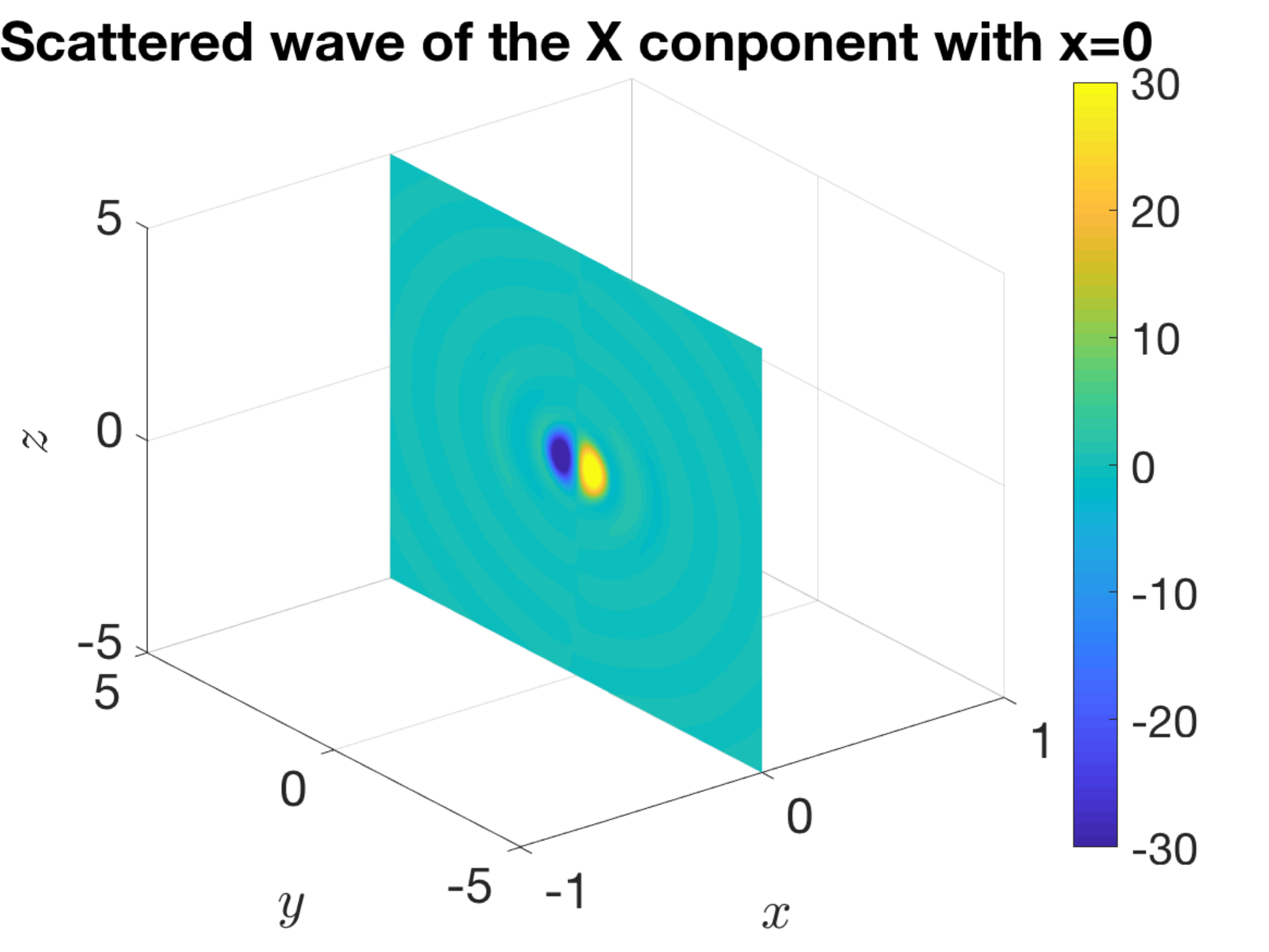}\\}
{\includegraphics[width=4.4cm]{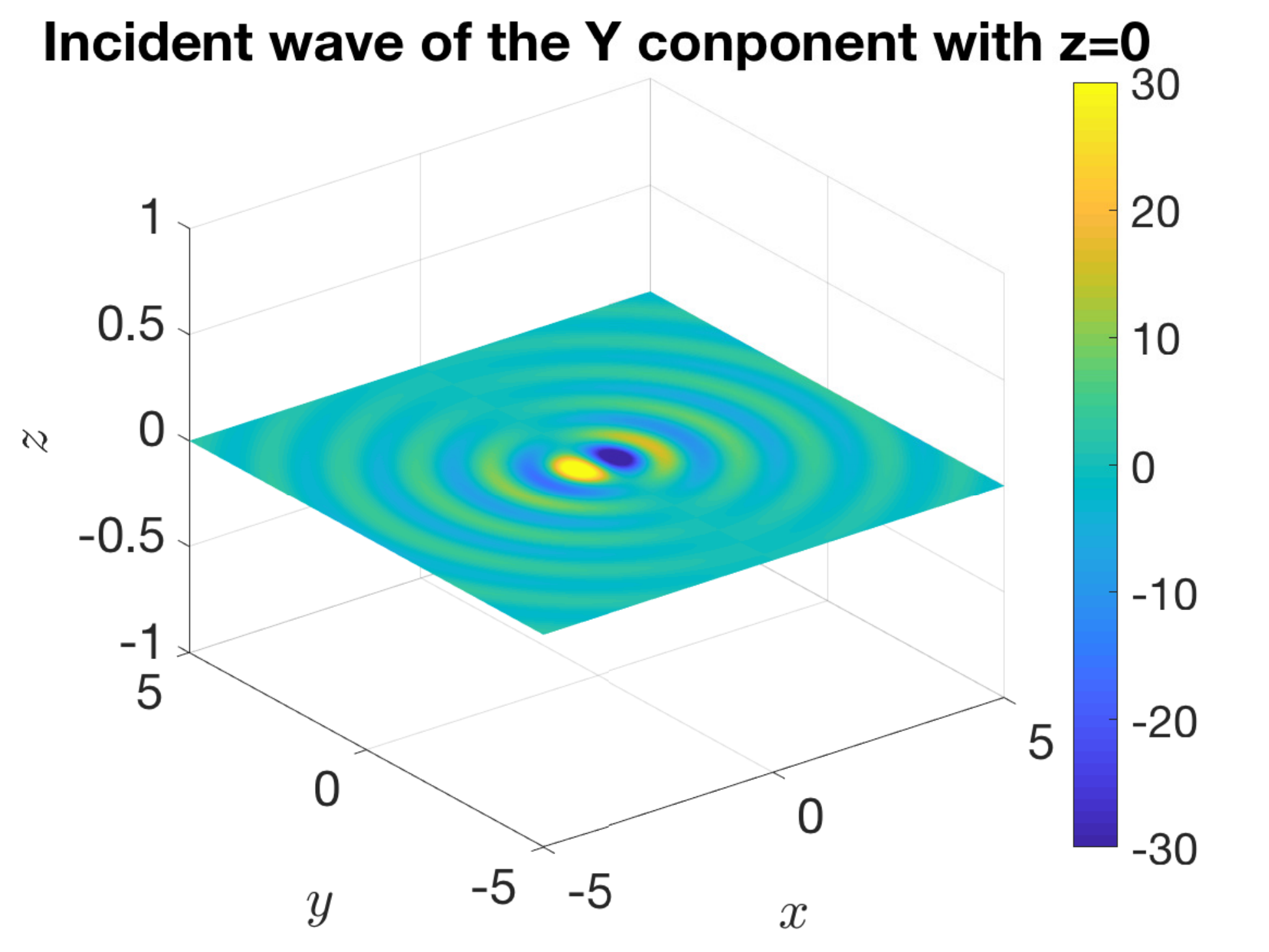}}
 {\includegraphics[width=4.4cm]{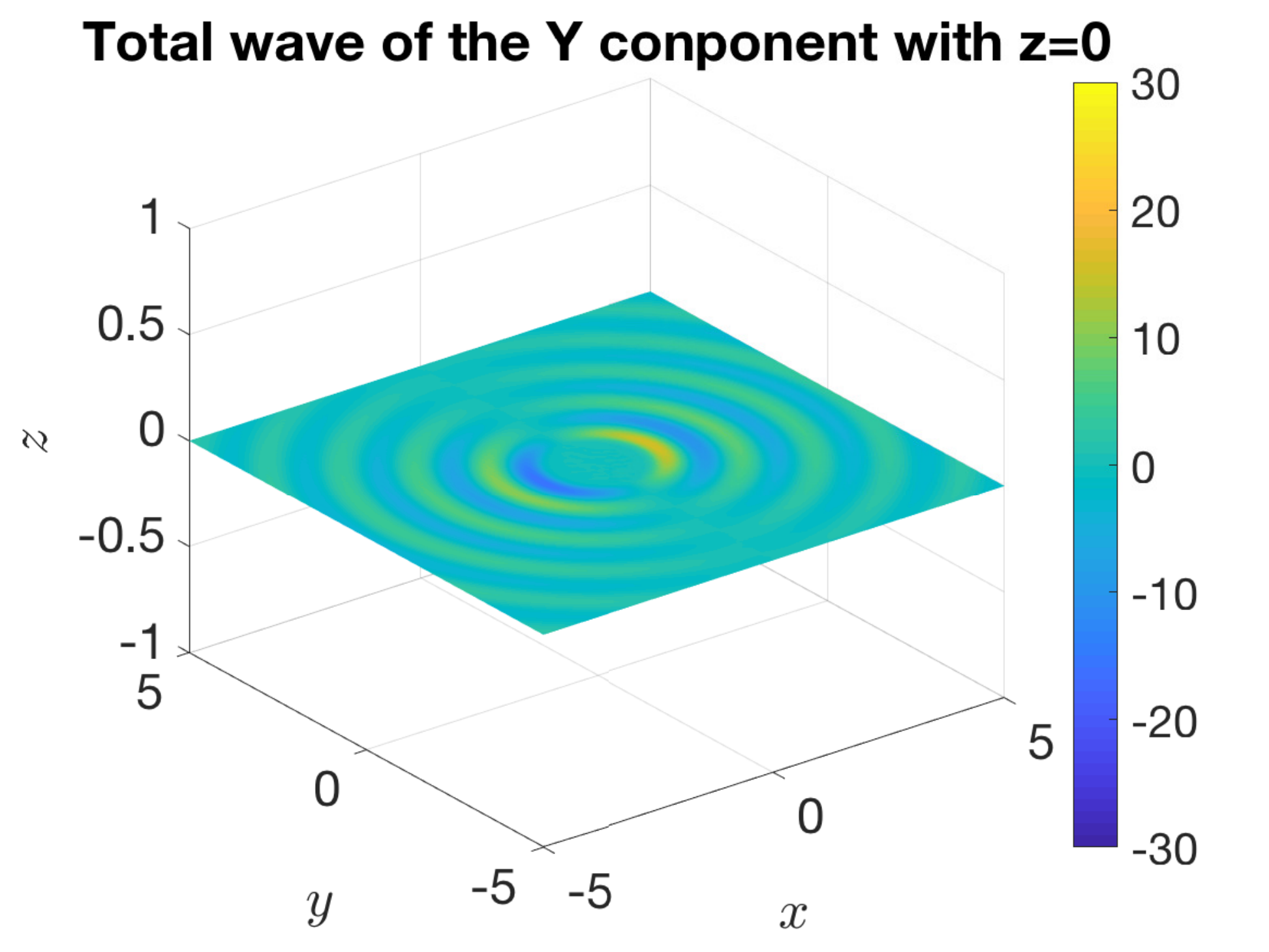}}
 {\includegraphics[width=4.4cm]{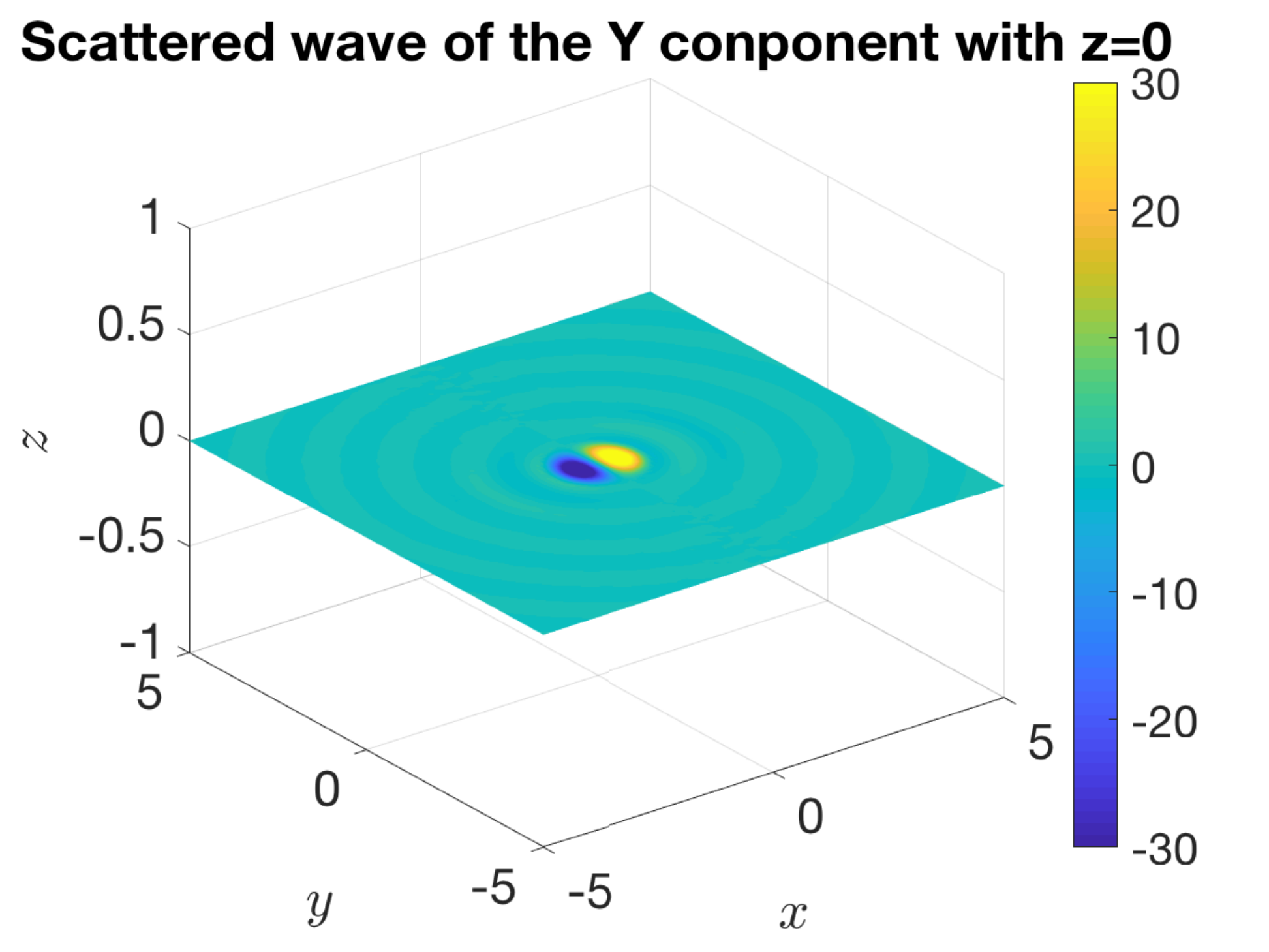}\\}
  \caption{Slice plotting of the $x$- and $y$-components of the incident, total and scattered electric fields associated to the electromagnetic configuration in \eqref{eq:ca1}--\eqref{eq:cloaking_source}. }
\end{figure}

\begin{figure}\label{fig:cloakingo_x}
  \centering
  {\includegraphics[width=4.4cm]{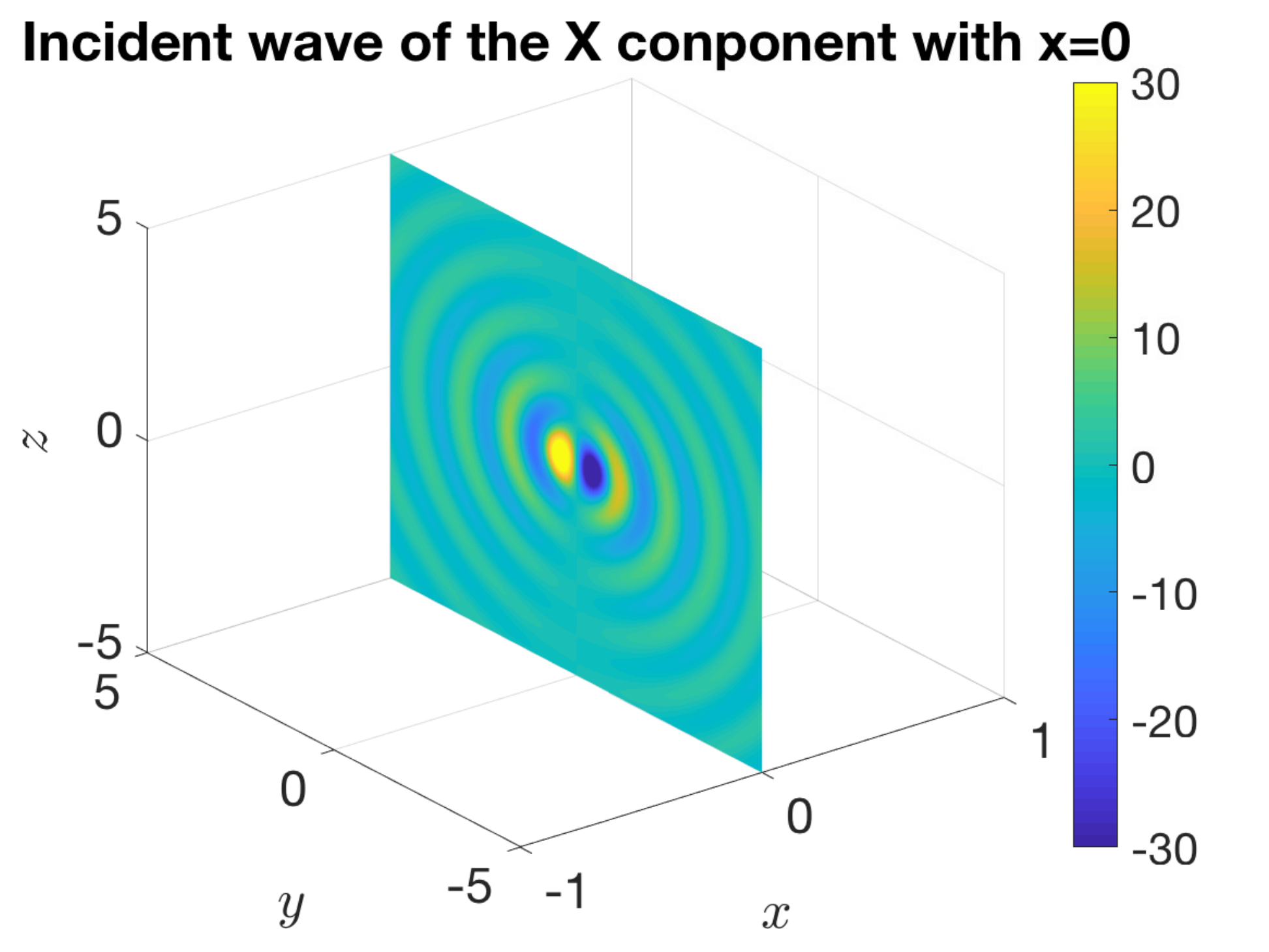}}
 {\includegraphics[width=4.4cm]{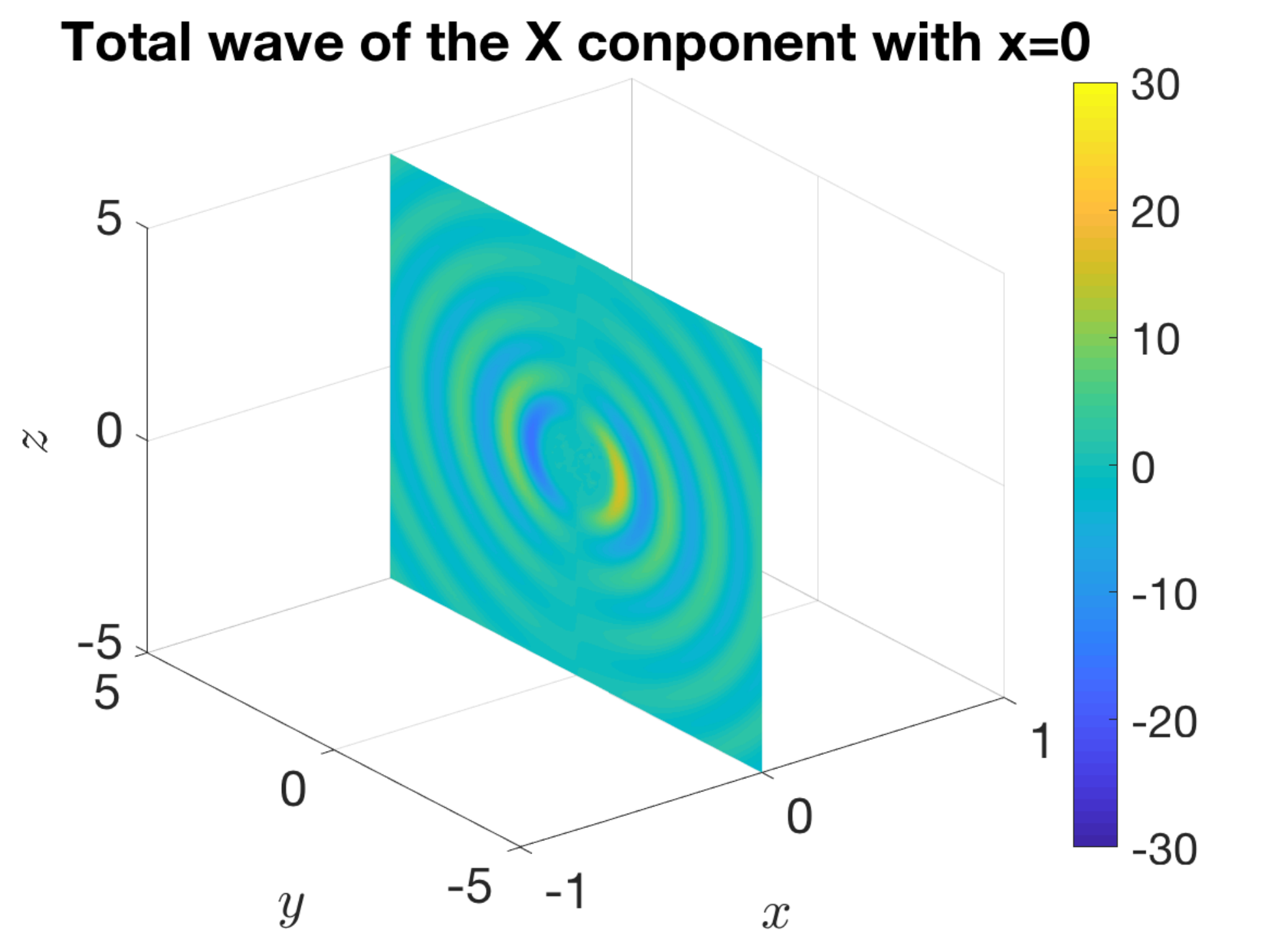}}
 {\includegraphics[width=4.4cm]{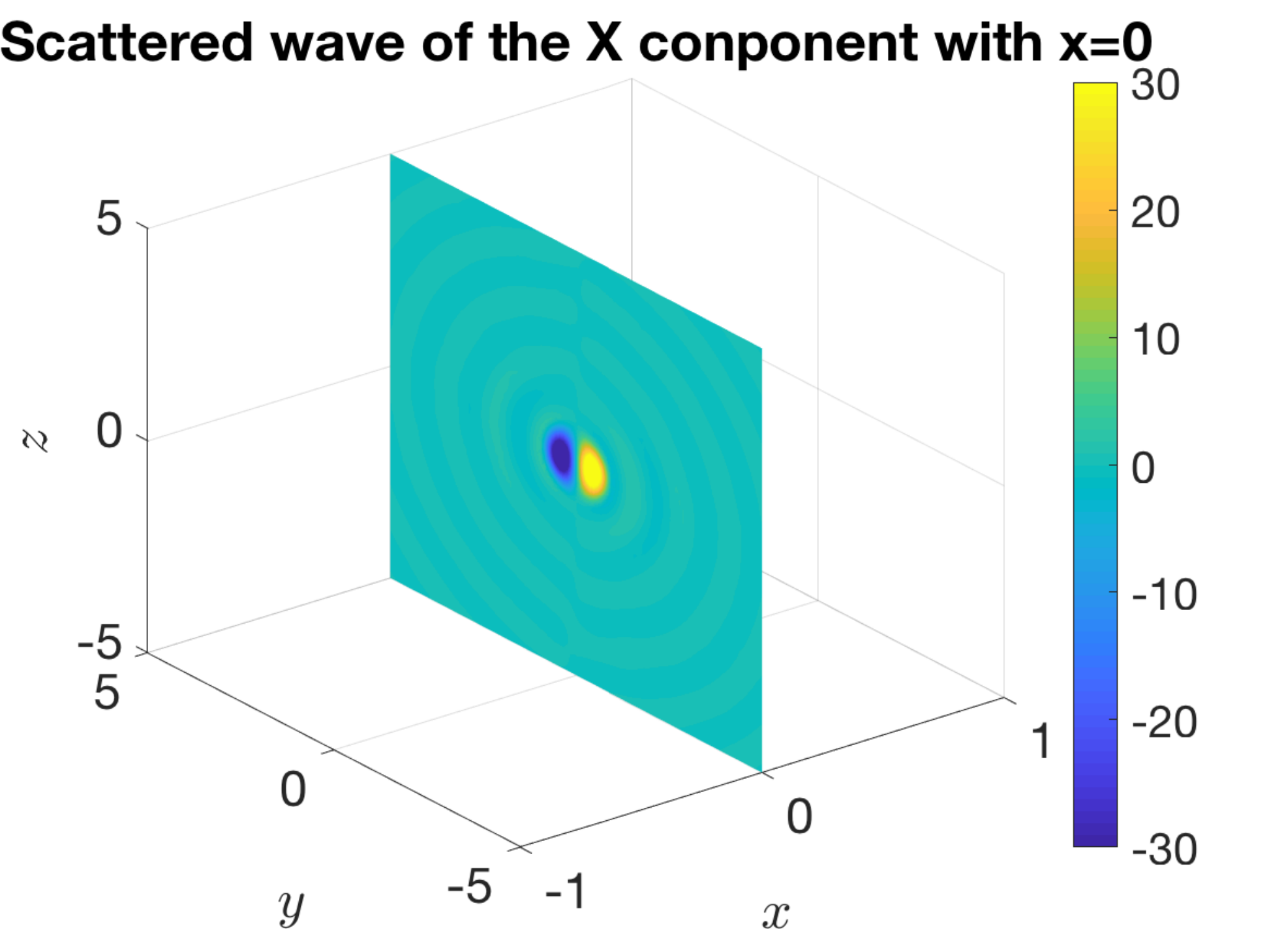}\\}
 {\includegraphics[width=4.4cm]{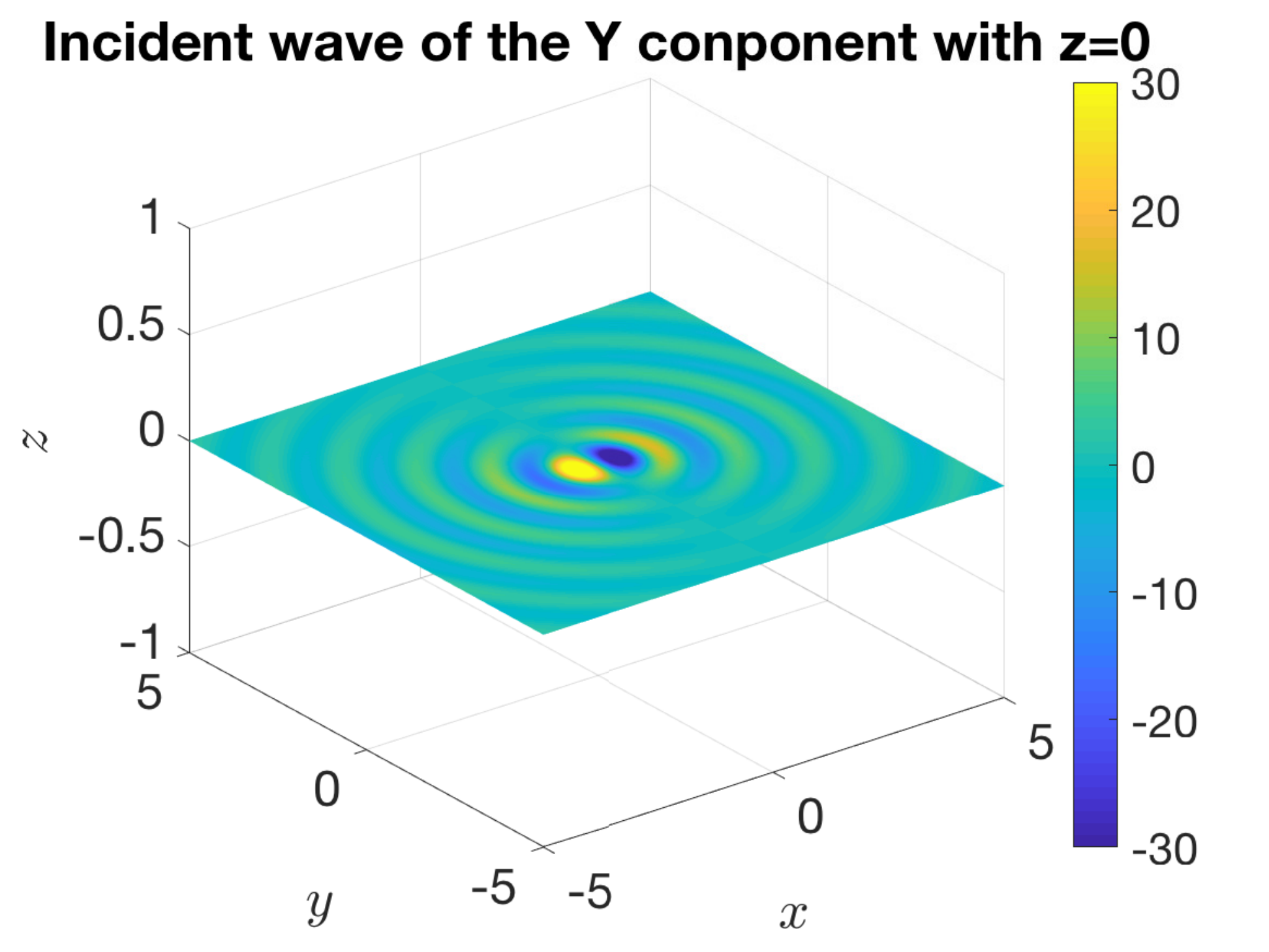}}
 {\includegraphics[width=4.4cm]{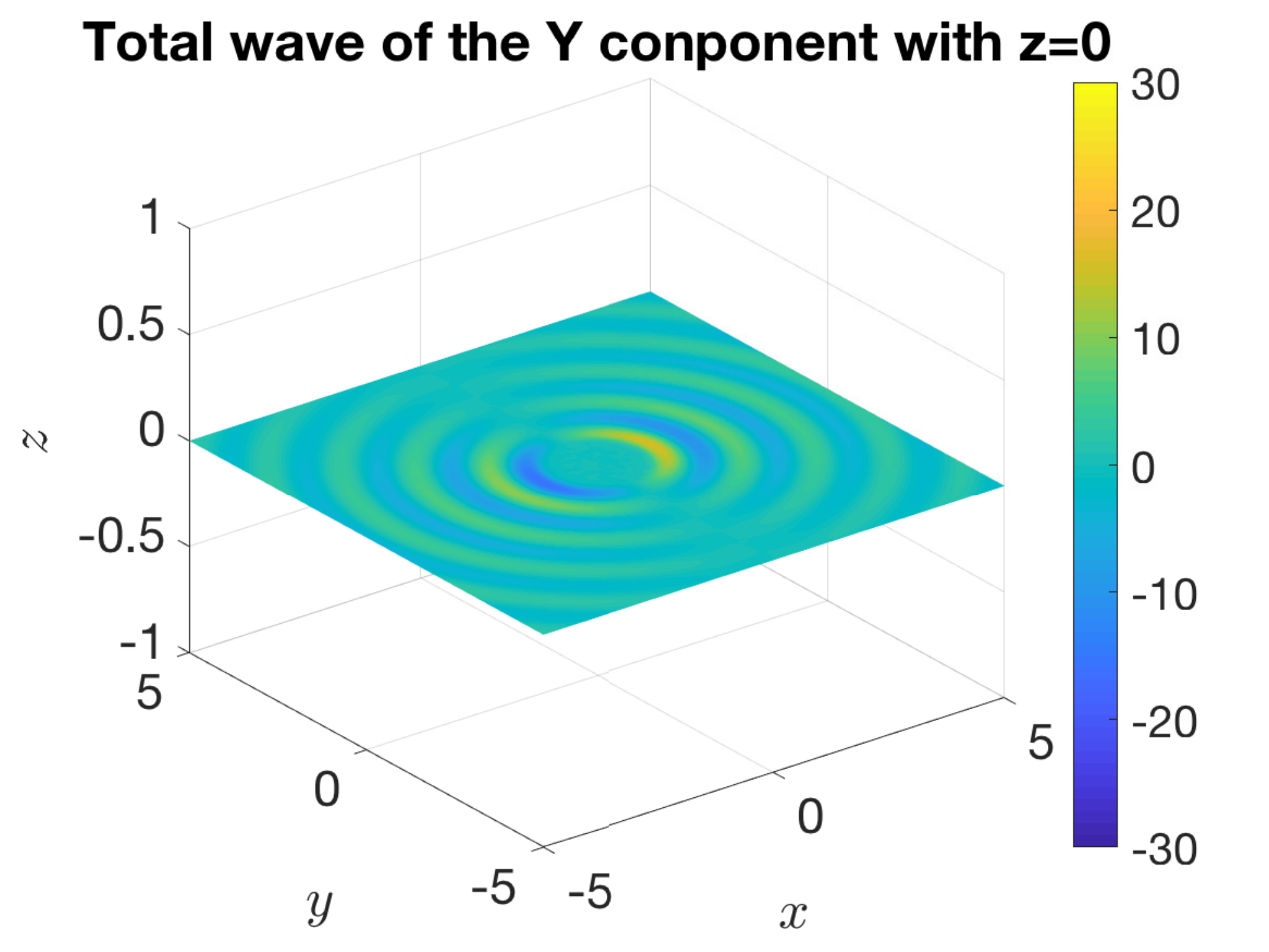}}
 {\includegraphics[width=4.4cm]{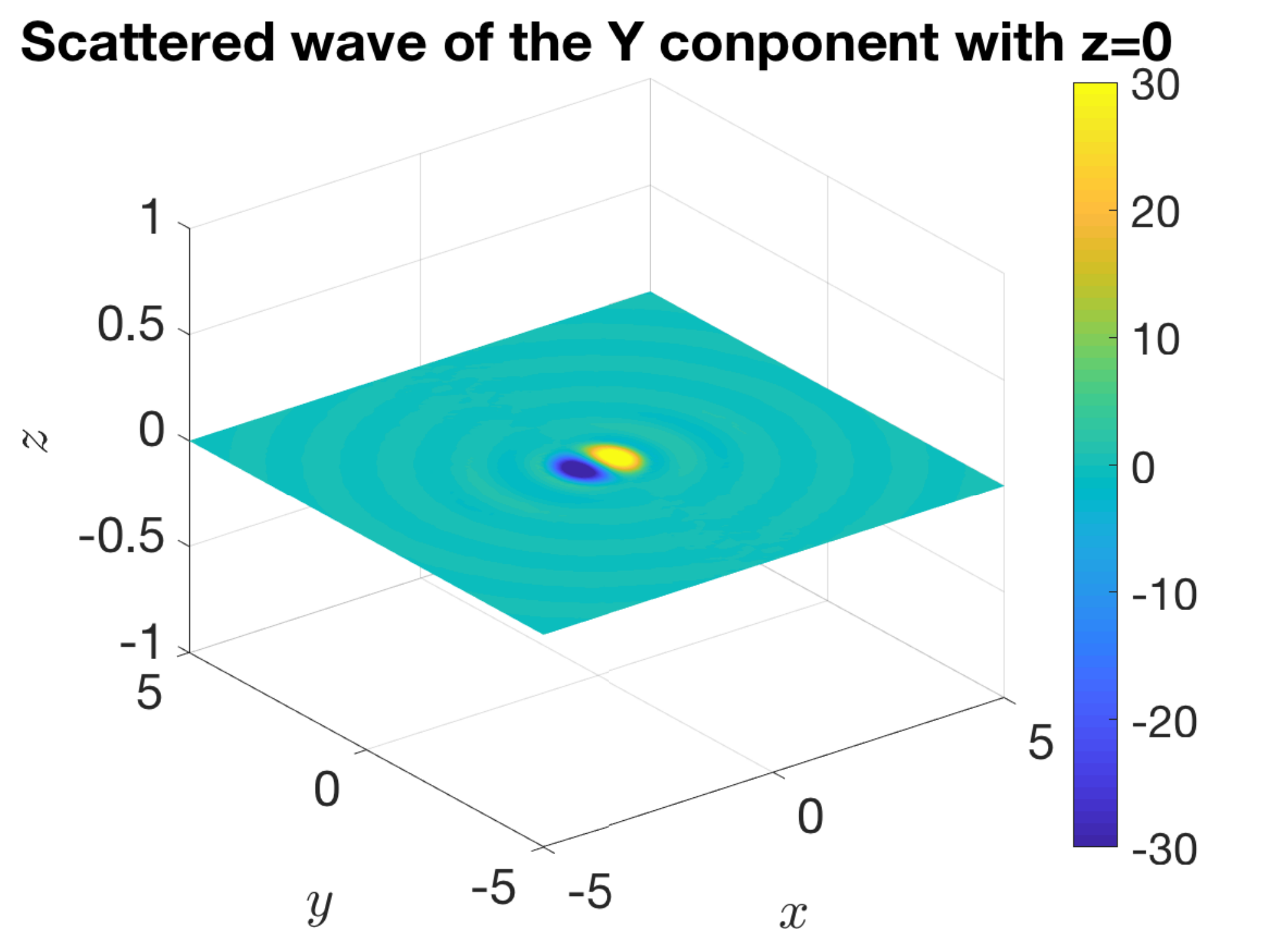}\\}
  \caption{Slice plotting of the $x$- and $y$-components of the incident, total and scattered electric fields associated with the electromagnetic configuration in \eqref{eq:ca1}--\eqref{eq:cloaking_source} that embraces an inclusion $(B_{1/2}; \epsilon=5, \mu=1)$ inside the plasmonic structure. }
\end{figure}

\begin{thm}\label{rem:invisible_1}
Suppose that $\mathbf{F}$ given in \eqref{eq:operator_eq} due to the incident wave has the following representation
\begin{equation}\label{eq:source_invisible1}
 \mathbf{F}=\sum_{i=1,3}\sum_{n=N}^{+\infty}f_{i,n}\bXi_{i,n},
\end{equation}
where $N\in\mathbb{N}$ is large enough such that when $n\geq N$ the spherical Bessel and Hankel functions $j_n(t)$ and $h_n^{(1)}(t)$ enjoy the asymptotic properties given in \eqref{eq:asymptotic_j} and \eqref{eq:asymptotic_h}. Let the plasmonic parameters $\epsilon_c$ and $\mu_c$ inside the domain $D$ and the wave frequency $\omega\in\mathbb{R}_+$ be chosen such that
\begin{equation}\label{eq:re01}
  \Re\left( (\mu_c+\mu_m) -(\epsilon_c + \epsilon_m)\omega^2 \right) \geq 0 \quad \mbox{and} \quad |(\epsilon_c + \epsilon_m)\omega^2|\gg 1,
\end{equation}
or 
\begin{equation}\label{eq:re02}
 \Re\left( (\mu_c+\mu_m) -(\epsilon_c + \epsilon_m)\omega^2 \right) < 0 \quad \mbox{and} \quad |\mu_c+\mu_m|\gg 1,
\end{equation}
then the corresponding scattered wave field is nearly vanishing outside $D$. That is, the plasmonic inclusion $D$ is nearly invisible. 
\end{thm}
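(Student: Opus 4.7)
The plan is to apply Definition~\ref{def:invi} modewise. Under hypothesis \eqref{eq:source_invisible1} only the eigenmodes $\bXi_{1,n}$ and $\bXi_{3,n}$ with $n\geq N$ carry non-vanishing Fourier coefficients in $\mathbf{F}$, so invisibility will follow once $|\tau_{1,n}|\gg 1$ and $|\tau_{3,n}|\gg 1$ are established uniformly for $n\geq N$: the spectral representation \eqref{eq:s1}--\eqref{eq:swf} then forces $\bE^s$ to be correspondingly small outside $\overline{D}$.

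First I would invoke the asymptotic expansion \eqref{eq:aym_eigen} to replace each $\tau_{i,n}$ by its leading term $\tilde\tau_{i,n}$. Setting $A:=(\mu_c+\mu_m)-(\epsilon_c+\epsilon_m)\omega^2$ and factoring $n$ out of the explicit formulas of \eqref{eq:aym_eigen}, the argument of the square root in both $\tilde\tau_{1,n}$ and $\tilde\tau_{3,n}$ equals $n^2A^2\bigl(1+\mathcal{O}(1/n)\bigr)$, so the two eigenvalue asymptotics collapse to the common leading-order form
\begin{equation*}
\tilde\tau_{1,n},\ \tilde\tau_{3,n}\ =\ \tfrac14\Bigl((\mu_c+\mu_m)+(\epsilon_c+\epsilon_m)\omega^2-\sqrt{A^2}\Bigr)+\mathcal{O}(1/n).
\end{equation*}
The coincidence of the leading terms reflects the symmetry of the unordered pair $\{\mu_c+\mu_m,\,(\epsilon_c+\epsilon_m)\omega^2\}$ under the $(\mu_c,\epsilon_c)\leftrightarrow(\mu_m,\epsilon_m)$ swap that distinguishes the $\tau_{3,n}$-formula from the $\tau_{1,n}$-formula.

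The next step is a careful choice of the square-root branch. Under \eqref{eq:re01}, $\Re A\geq 0$, so the principal branch yields $\sqrt{A^2}=A$; the two $(\mu_c+\mu_m)$ terms cancel and the leading order becomes $\tfrac12(\epsilon_c+\epsilon_m)\omega^2$, of large modulus by the second half of \eqref{eq:re01}. Under \eqref{eq:re02}, $\Re A<0$ so $\sqrt{A^2}=-A$; now the two $(\epsilon_c+\epsilon_m)\omega^2$ terms cancel and the leading order becomes $\tfrac12(\mu_c+\mu_m)$, of large modulus by the second half of \eqref{eq:re02}. In either regime $|\tau_{1,n}|,|\tau_{3,n}|\gg 1$ uniformly for $n\geq N$, and Definition~\ref{def:invi} then yields the claimed invisibility.

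The principal technical obstacle is making the branch identification for $\sqrt{\cdot}$ uniform in $n\geq N$: the $\mathcal{O}(1/n)$ correction inside the radicand must not rotate $n^2A^2$ across the branch cut. One absorbs this by enlarging $N$ so that the perturbation is dominated in modulus by $|nA|^2$, which requires a quantitative lower bound on $|A|$. The hypotheses in \eqref{eq:re01} and \eqref{eq:re02}, namely the sign constraint on $\Re A$ combined with the bigness of one of $|(\epsilon_c+\epsilon_m)\omega^2|$ or $|\mu_c+\mu_m|$, deliver exactly such a bound by preventing simultaneous cancellation in the two terms defining $A$.
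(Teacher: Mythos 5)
Your first three paragraphs reproduce, with more detail, the paper's intended argument: use the asymptotic formula \eqref{eq:aym_eigen}, note that the radicands in $\tilde\tau_{1,n}$ and $\tilde\tau_{3,n}$ both behave like $n^2A^2$ with $A:=(\mu_c+\mu_m)-(\epsilon_c+\epsilon_m)\omega^2$, and show that under \eqref{eq:re01} the cancellation yields $\tilde\tau_{1,n}=\tfrac12(\epsilon_c+\epsilon_m)\omega^2+\mathcal{O}(1/n)$ (and similarly $\tilde\tau_{3,n}$), which is large; under \eqref{eq:re02} it collapses to $\tfrac12(\mu_c+\mu_m)$. This is exactly the route the paper takes (it simply writes ``by following a similar argument to the proof of Theorem \ref{thm:reson_n}'' and records \eqref{eq:ff1}). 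You also correctly flag that both $\tau_{1,n}$ and $\tau_{3,n}$ must be controlled, not just $\tau_{1,n}$, and you correctly observe that they share the same leading-order behaviour.

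The final paragraph, however, contains a false claim. You assert that the hypotheses \eqref{eq:re01}/\eqref{eq:re02} supply a quantitative lower bound on $|A|$ ``by preventing simultaneous cancellation in the two terms defining $A$.'' They do no such thing: under \eqref{eq:re01} one may have $\mu_c+\mu_m$ and $(\epsilon_c+\epsilon_m)\omega^2$ both very large and nearly equal, making $A$ arbitrarily small while $\Re A\geq 0$ and $|(\epsilon_c+\epsilon_m)\omega^2|\gg 1$ still hold. The good news is that a lower bound on $|A|$ is not needed. Since the parameters in the theorem are fixed, for $A\neq 0$ there is a (parameter-dependent) $N$ beyond which $n^2A^2$ dominates the $\mathcal{O}(1)$ remainder in the radicand, so the branch identification stabilises; and if $A$ happens to be zero or very small, then $\tfrac12(\mu_c+\mu_m)$ and $\tfrac12(\epsilon_c+\epsilon_m)\omega^2$ are close, so whichever branch the square root takes, the leading term has large modulus by the second half of \eqref{eq:re01} (resp.\ \eqref{eq:re02}). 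So the concern you raise is legitimate but the way you dispose of it is not; the correct resolution is that the ambiguity is harmless, not that $|A|$ is bounded below.

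One further small point: at $\Re A=0$ (allowed by the non-strict inequality in \eqref{eq:re01}) the principal-branch identity $\sqrt{A^2}=A$ holds only when $\Im A\geq 0$. The paper glosses over this as well, and it does not affect the conclusion by the argument just given, but stating $\sqrt{A^2}=A$ as an unconditional consequence of $\Re A\geq 0$ is imprecise.
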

\begin{proof}
By following a similar argument to the proof of Theorem \ref{thm:reson_n}, one can show that when $n\gg1$ and 
\[
  \Re\left( (\mu_c+\mu_m) -(\epsilon_c + \epsilon_m)\omega^2 \right) \geq 0,
\]
there holds
\begin{equation}\label{eq:ff1}
  \tilde{\tau}_{1,n}=\frac{1}{2}(\epsilon_c + \epsilon_m)\omega^2 + \mathcal{O}\left(\frac{1}{n}\right).
\end{equation}
Then by using \eqref{eq:ff1} and the second condition in \eqref{eq:re01}, one has 
\begin{equation}\label{eq:ff2}
 | \tilde{\tau}_{1,n}|\gg 1,
\end{equation}
which holds for $n\in\mathbb{N}$ sufficiently large. Clearly, the conditions in Definition~\ref{def:invi} are fulfilled and hence one has the cloaking effect. The other case in \eqref{eq:re02} can be proved in a similar manner. 

The proof is complete. 
\end{proof}

\begin{thm}\label{rem:invisible_2}
Suppose that $\mathbf{F}$ given in \eqref{eq:operator_eq} due to the incident wave has the following representation
\begin{equation}\label{eq:source_invisible2}
 \mathbf{F}=\sum_{i=2,4}\sum_{n=N}^{+\infty}f_{i,n}\bXi_{i,n},
\end{equation}
where $N\in\mathbb{N}$ is large enough such that when $n\geq N$ the spherical Bessel and Hankel functions $j_n(t)$ and $h_n^{(1)}(t)$ enjoy the asymptotic properties given in \eqref{eq:asymptotic_j} and \eqref{eq:asymptotic_h}. Let the plasmonic parameters $\epsilon_c$ and $\mu_c$ inside the domain $D$ and the wave frequency $\omega\in\mathbb{R}_+$ be chosen such that
\begin{equation}\label{eq:re03}
  \Re\left( (\mu_c+\mu_m) -(\epsilon_c + \epsilon_m)\omega^2 \right) \geq 0, \quad \mbox{and} \quad  |\mu_c+\mu_m|\gg 1,
\end{equation}
or 
\begin{equation}\label{eq:re04}
 \Re\left( (\mu_c+\mu_m) -(\epsilon_c + \epsilon_m)\omega^2 \right) < 0, \quad \mbox{and} \quad |(\epsilon_c + \epsilon_m)\omega^2|\gg 1,
\end{equation}
then the corresponding scattered wave field is nearly vanishing outside $D$. That is, the plasmonic inclusion $D$ is nearly invisible. 
\end{thm}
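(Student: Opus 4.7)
The plan is to mirror the proof of Theorem \ref{rem:invisible_1}, but exploiting the fact that $\tilde{\tau}_{2,n}$ and $\tilde{\tau}_{4,n}$ carry the $+$ sign in front of the square root in \eqref{eq:aym_eigen}, which swaps which of the two quantities $(\mu_c+\mu_m)$ and $(\epsilon_c+\epsilon_m)\omega^2$ becomes the dominant contribution after the cancellation inside the radical.

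First, I would insert $\frac{n}{2n+1}=\frac{1}{2}+\mathcal{O}(1/n)$ and $\frac{n+1}{2n+1}=\frac{1}{2}+\mathcal{O}(1/n)$ into the expression for $\tilde{\tau}_{2,n}$ in \eqref{eq:aym_eigen}, together with the fact that the coefficient of $R^2$ inside the radical is $\mathcal{O}(1)$ so it contributes $\mathcal{O}(1/n)$ to the prefactor $1/(2(2n+1))$ after the square root is extracted. This yields, for $n\geq N$ with $N\gg 1$,
\[
 \tilde{\tau}_{2,n}=\frac{1}{4}\Big((\mu_c+\mu_m)+(\epsilon_c+\epsilon_m)\omega^2 + \sqrt{\big((\mu_c+\mu_m)-(\epsilon_c+\epsilon_m)\omega^2\big)^2}\,\Big)+\mathcal{O}\!\left(\frac{1}{n}\right),
\]
and an entirely parallel computation for $\tilde{\tau}_{4,n}$ with the roles of $\mu_c$ and $(\epsilon_c)\omega^2$ interchanged in the way dictated by \eqref{eq:aym_eigen}.

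Next, I would split into the two cases. Under \eqref{eq:re03}, one has $\Re\big((\mu_c+\mu_m)-(\epsilon_c+\epsilon_m)\omega^2\big)\geq 0$, so the principal branch of the square root evaluates to $(\mu_c+\mu_m)-(\epsilon_c+\epsilon_m)\omega^2$ and the two $(\epsilon_c+\epsilon_m)\omega^2$ contributions cancel, leaving
\[
 \tilde{\tau}_{2,n}=\frac{1}{2}(\mu_c+\mu_m)+\mathcal{O}\!\left(\frac{1}{n}\right).
\]
The hypothesis $|\mu_c+\mu_m|\gg 1$ then gives $|\tilde{\tau}_{2,n}|\gg 1$ for $n\geq N$, and hence $|\tau_{2,n}|\gg 1$ by \eqref{eq:aym_eigen}. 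Under \eqref{eq:re04}, the sign of the real part inside the radical is reversed, so the square root contributes $(\epsilon_c+\epsilon_m)\omega^2-(\mu_c+\mu_m)$ and the magnetic contributions cancel instead, leaving
\[
 \tilde{\tau}_{2,n}=\frac{1}{2}(\epsilon_c+\epsilon_m)\omega^2+\mathcal{O}\!\left(\frac{1}{n}\right),
\]
which is of magnitude $\gg 1$ by the second part of \eqref{eq:re04}. The analogous argument for $\tilde{\tau}_{4,n}$ produces the same dominant terms in the same two cases.

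Finally, since the source $\mathbf{F}$ in \eqref{eq:source_invisible2} is supported only on eigenmodes $\bXi_{2,n}$ and $\bXi_{4,n}$ with $n\geq N$, the only Fourier coefficients $f_j$ that enter the representation \eqref{eq:s1} and \eqref{eq:swf} are paired with eigenvalues satisfying $|\tau_{2,n}|\gg 1$ or $|\tau_{4,n}|\gg 1$. Hence the hypothesis of Definition \ref{def:invi} is fulfilled, and the scattered field $\bE^s$ given by \eqref{eq:swf} is nearly vanishing outside $D$, completing the proof. The only delicate point is justifying that the $\mathcal{O}(1/n)$ remainder from the asymptotics of $j_n,h_n^{(1)}$ does not spoil the dominant term; this is precisely what the choice of $N$ sufficiently large guarantees, and is identical in spirit to the control used in Theorem \ref{rem:invisible_1}.
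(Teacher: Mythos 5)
Your proposal is correct and takes the same approach as the paper: the paper's own proof of Theorem~\ref{rem:invisible_2} simply states that it is analogous to Theorem~\ref{rem:invisible_1}, and the key computational content---that the $+$ sign in front of the radical in $\tilde{\tau}_{2,n}$ and $\tilde{\tau}_{4,n}$ swaps which of $\mu_c+\mu_m$ and $(\epsilon_c+\epsilon_m)\omega^2$ survives as the leading-order term under each sign condition, yielding $|\tilde{\tau}_{2,n}|\gg1$ and $|\tilde{\tau}_{4,n}|\gg1$ for $n\geq N$ under \eqref{eq:re03} or \eqref{eq:re04}---is exactly what the analogy entails. You have merely made explicit what the paper leaves implicit.
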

\begin{proof}
The proof is analogous to that of the Theorem \ref{rem:invisible_1}.
\end{proof}

\begin{rem}\label{rem:cc1}
It is remarked that the conditions \eqref{eq:re01} or \eqref{eq:re02} in Theorem~\ref{rem:invisible_1} can be easily fulfilled, say e.g., one can simply choose the parameter $\epsilon_c$ such that 
\[
 \epsilon_c < -\epsilon_m,
\]
and the source $\mathbf{F}$ is of the form \eqref{eq:source_invisible1} with $\omega\gg1$, then \eqref{eq:re01} is satisfied. The same remark holds for the other conditions in Theorems~\ref{rem:invisible_1} and \ref{rem:invisible_2}.
\end{rem}

\begin{rem}\label{rem:cc2}
By Theorem \ref{rem:invisible_1} and \ref{rem:invisible_2}, one can conclude that if the parameters $\epsilon_c$ and $\mu_c$ inside the domain $D$, and the frequency $\omega$ satisfy the following condition
\begin{equation}\label{eq:cc1}
 |\mu_c+\mu_m|\gg 1 \quad \mbox{and}\quad  |(\epsilon_c + \epsilon_m)\omega^2|\gg 1,
\end{equation}
then the inclusion $(D; \epsilon_c,\mu_c)$ is nearly invisible to a general source term $\mathbf{F}$ of the form
\[
  \mathbf{F}=\sum_{i=1}^4\sum_{n=N}^{+\infty}f_{i,n}\bXi_{i,n},
\]
where $N\in\mathbb{N}$ is large enough such that the spherical Bessel and Hankel functions $j_n(t)$ and $h_n^{(1)}(t)$ enjoy the asymptotic properties given in \eqref{eq:asymptotic_j} and \eqref{eq:asymptotic_h}. However, it is also noted that the first condition in \eqref{eq:cc1} may not be practical and hence this remark may be mainly of theoretical interest. 
\end{rem}

By Remarks~\ref{rem:cc1} and \ref{rem:cc2}, one can readily see that the high-frequency of the electromagnetic wave plays a critical role for such a cloaking phenomenon and it may not be so realistic to consider such a phenomenon in the quasi-static regime. Finally, we present plasmonic constructions following the Drude model \eqref{eq:drude} that can induce the cloaking effect. 

 If we take 
\[
 \epsilon_0=\mu_0=1,\quad \omega=5,\quad \tau=6.615\times10^{-7}, \quad \omega_0=2,\quad \omega_p^2=188.952 \quad \mbox{and}\quad \mathcal{F}=0,
\]
then by \eqref{eq:drude} one can obtain that 
\[
 \epsilon_c=-6.55806+0.000001\rmi \quad \mbox{and} \quad \mu_c=1,
\]
which is exactly the one constructed before in \eqref{eq:ca1} and its cloaking effect has been shown in Figs.~3 and 4. Next, we show a case with a nonzero filling factor by taking 
\begin{equation}\label{eq:gg1}
 \epsilon_0=\mu_0=1,\quad \omega=5,\quad \tau=0.00001, \quad\omega_0=2,\quad \omega_p^2=186.769\quad \mbox{and}\quad \mathcal{F}=0.02,
\end{equation}
then by \eqref{eq:drude} one can obtain that 
\begin{equation}\label{eq:gg2}
 \epsilon_c=-6.47076+0.00001494\rmi \quad \mbox{and} \quad \mu_c=0.97619 + 5.66893\times10^{-8}\rmi.
\end{equation}
Let the incident wave $\bE^i$ be given in \eqref{eq:cloaking_source}. The slice plottings of the $x$- and $y$-components of the incident, total and scattered electric fields associated with the afore-described configuration are, respectively, presented in Figs.~5 and 6. It is noted that the corresponding $z$-components are all zero. Apparently, there are cloaking effects observed. 

\begin{figure}\label{fig:reson_drude_x}
  \centering
  {\includegraphics[width=4.4cm]{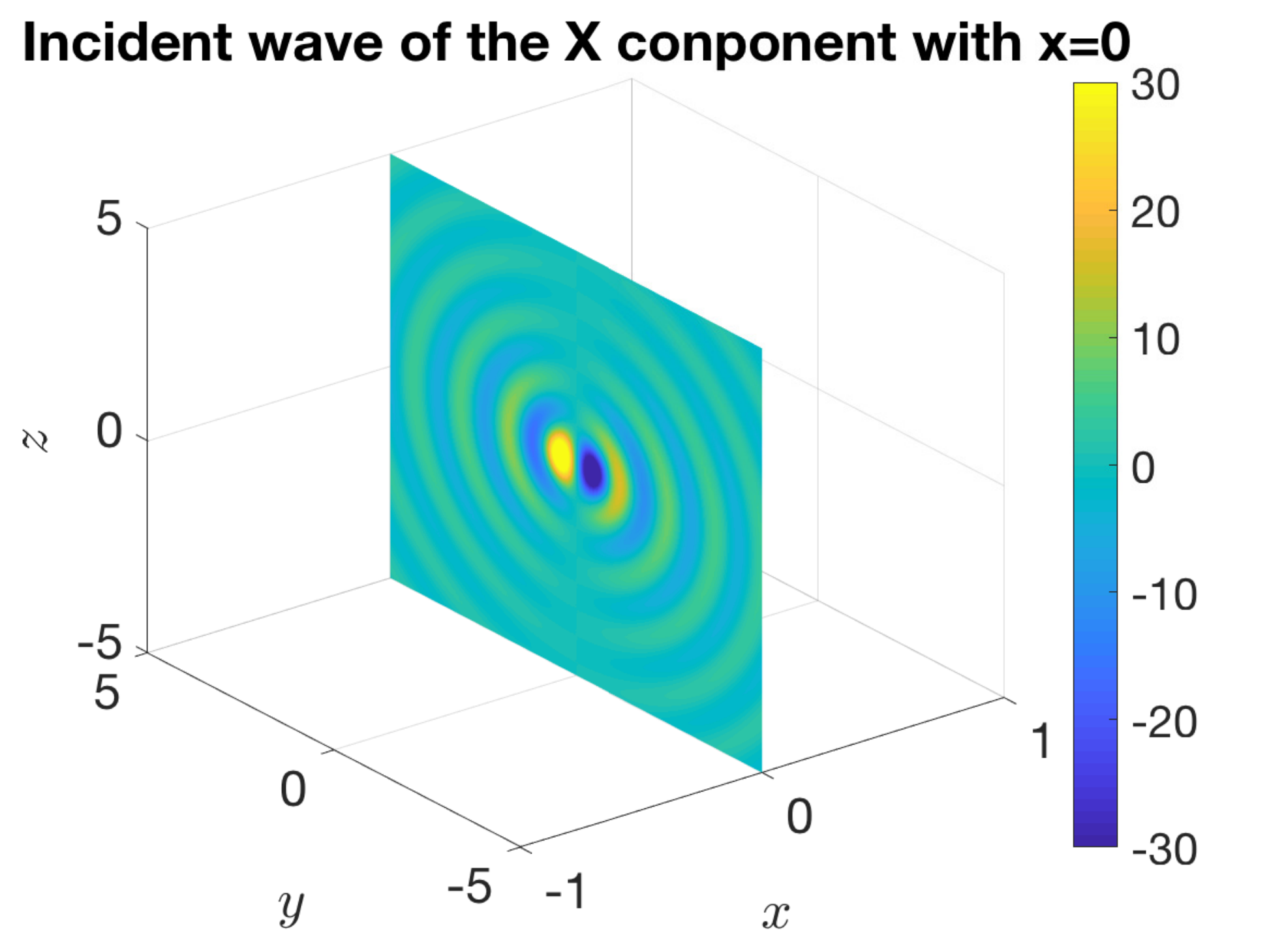}}
 {\includegraphics[width=4.4cm]{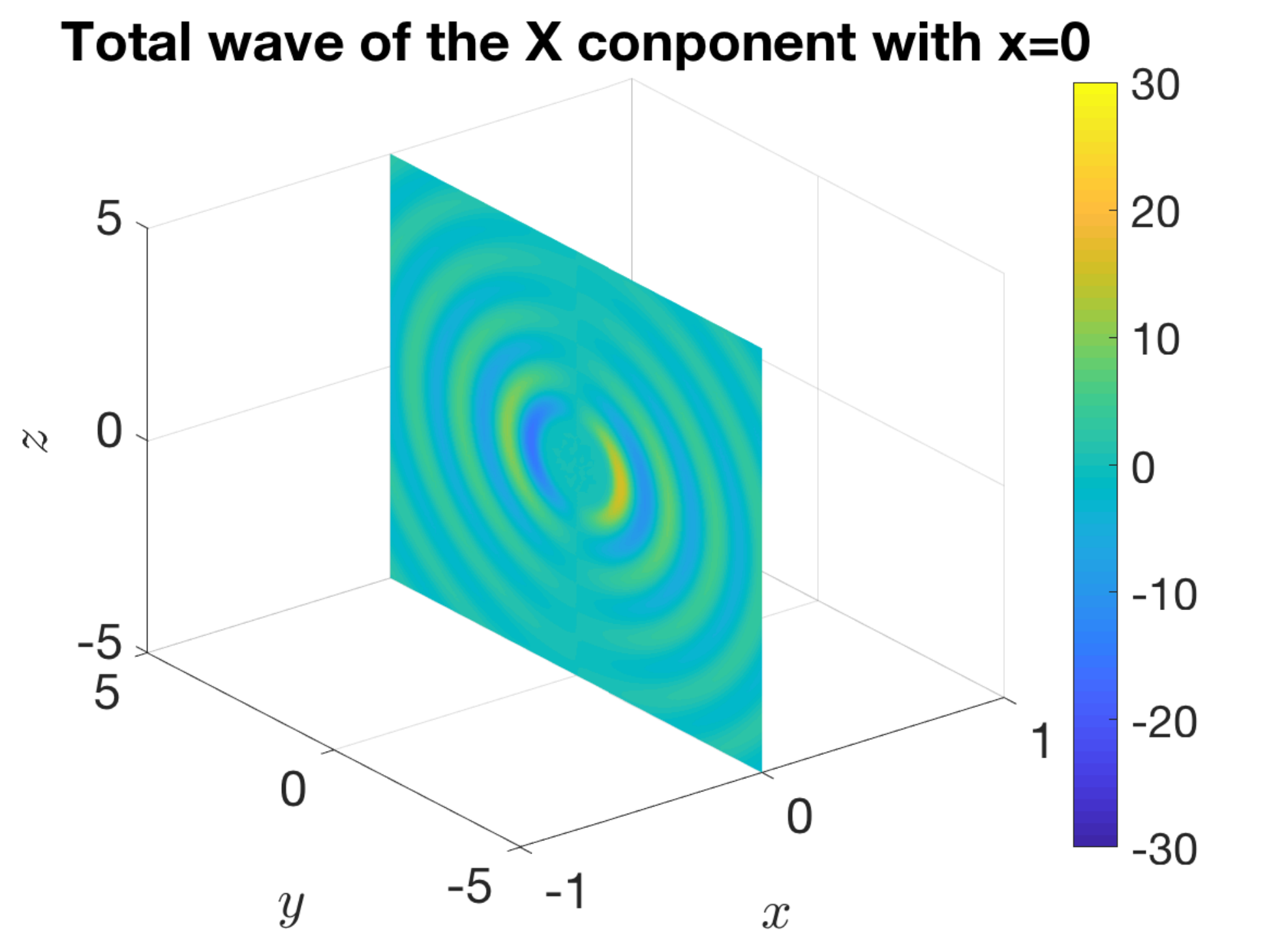}}
 {\includegraphics[width=4.4cm]{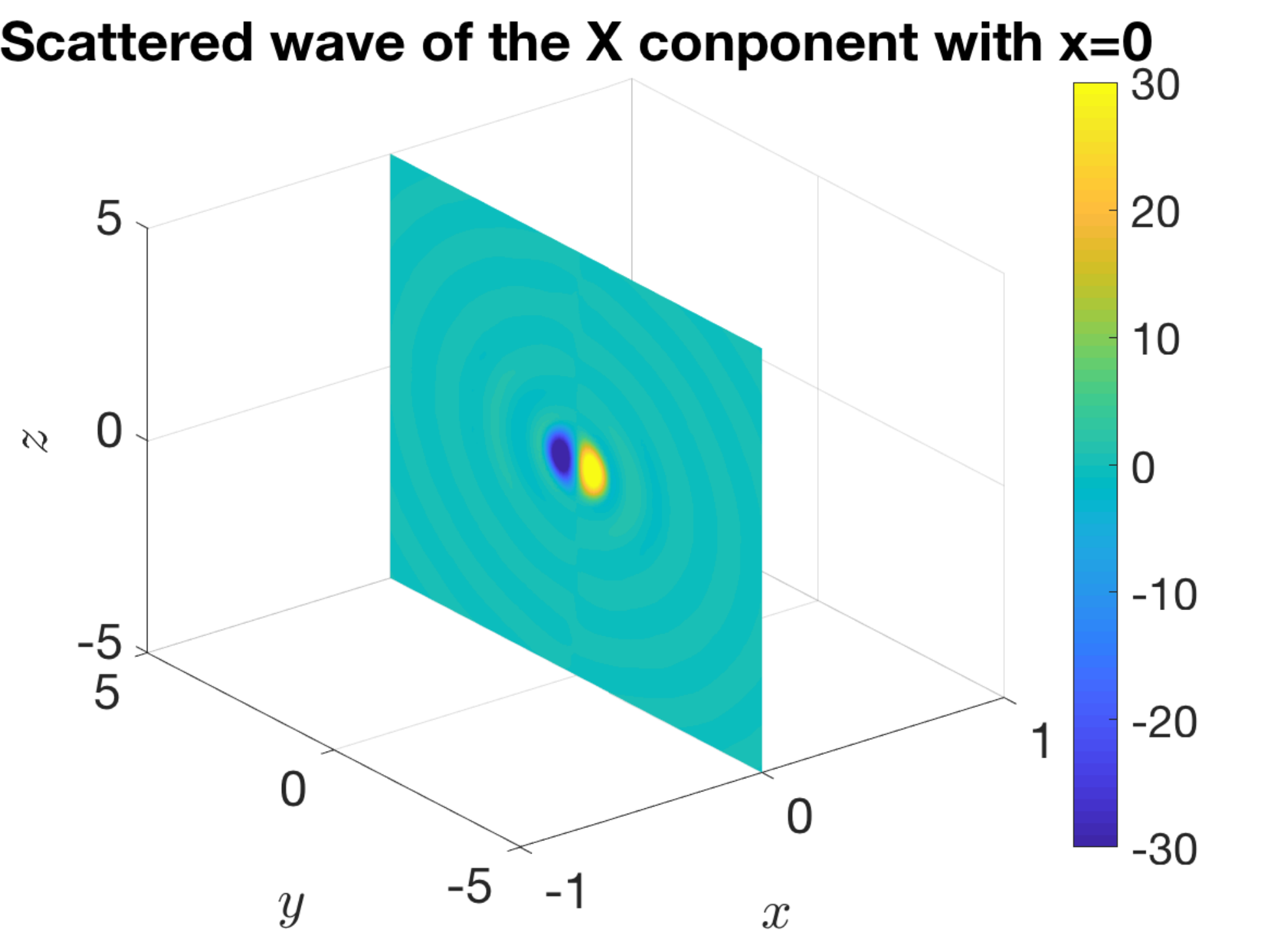}\\}
 {\includegraphics[width=4.4cm]{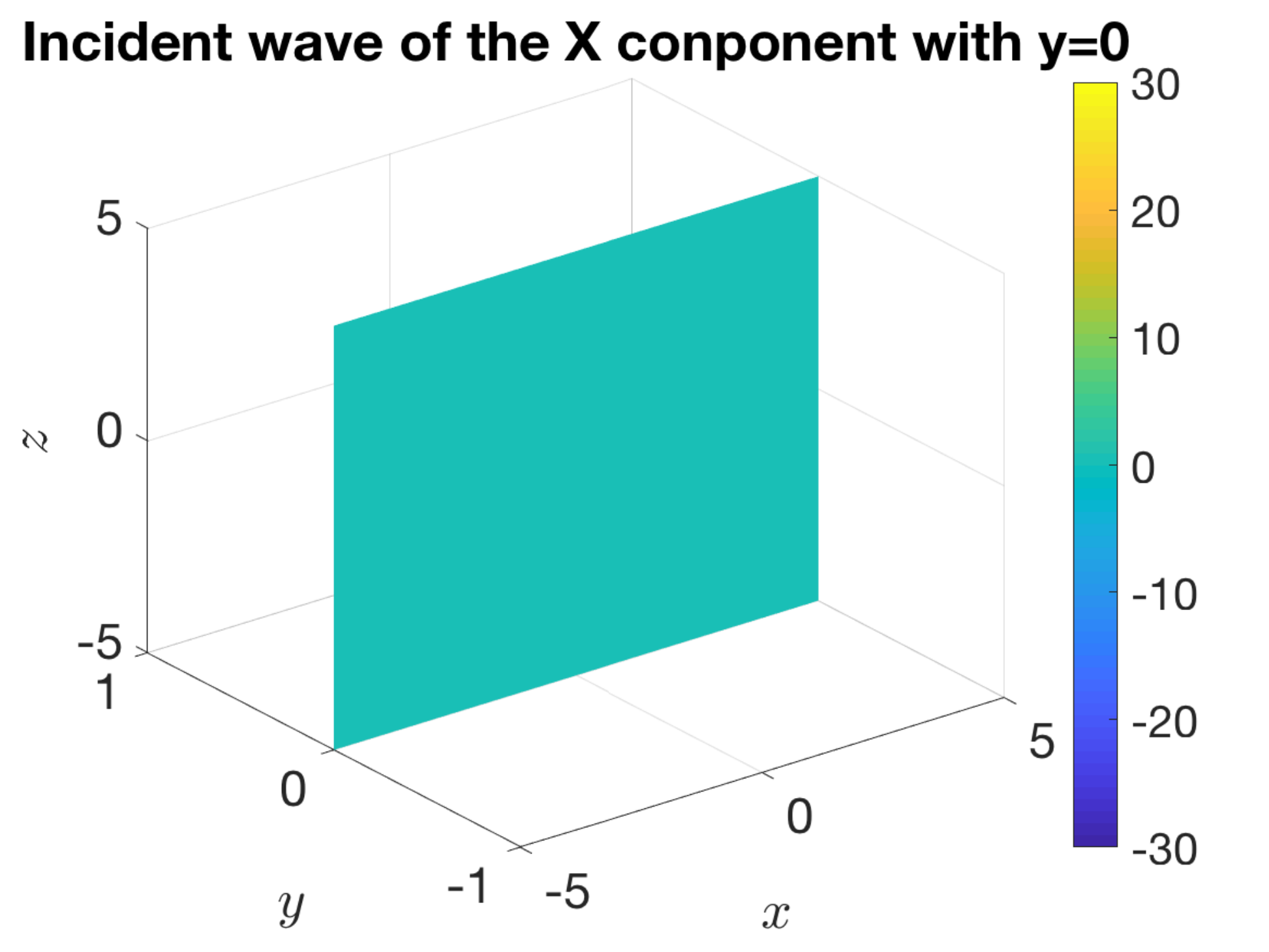}}
 {\includegraphics[width=4.4cm]{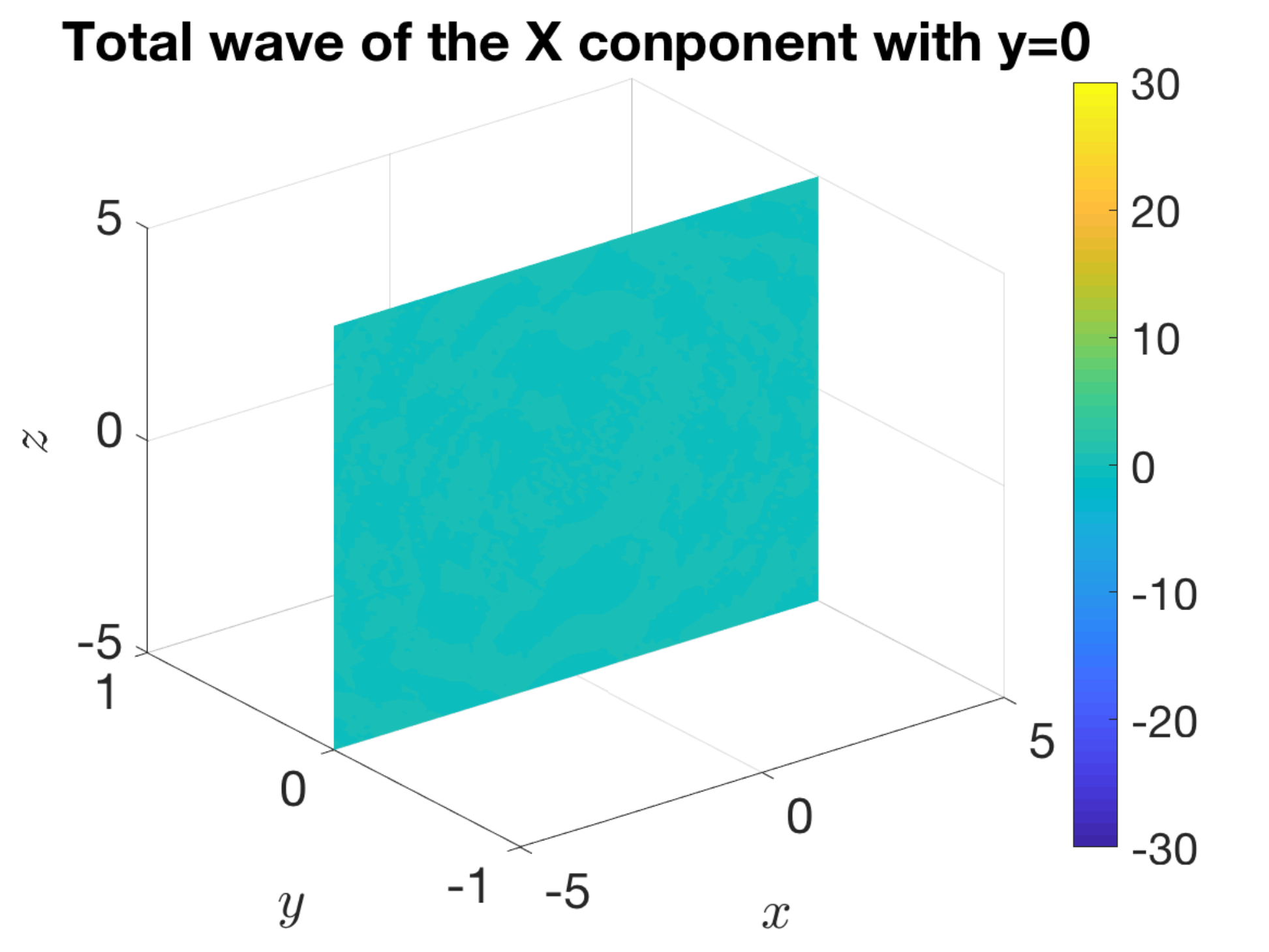}}
 {\includegraphics[width=4.4cm]{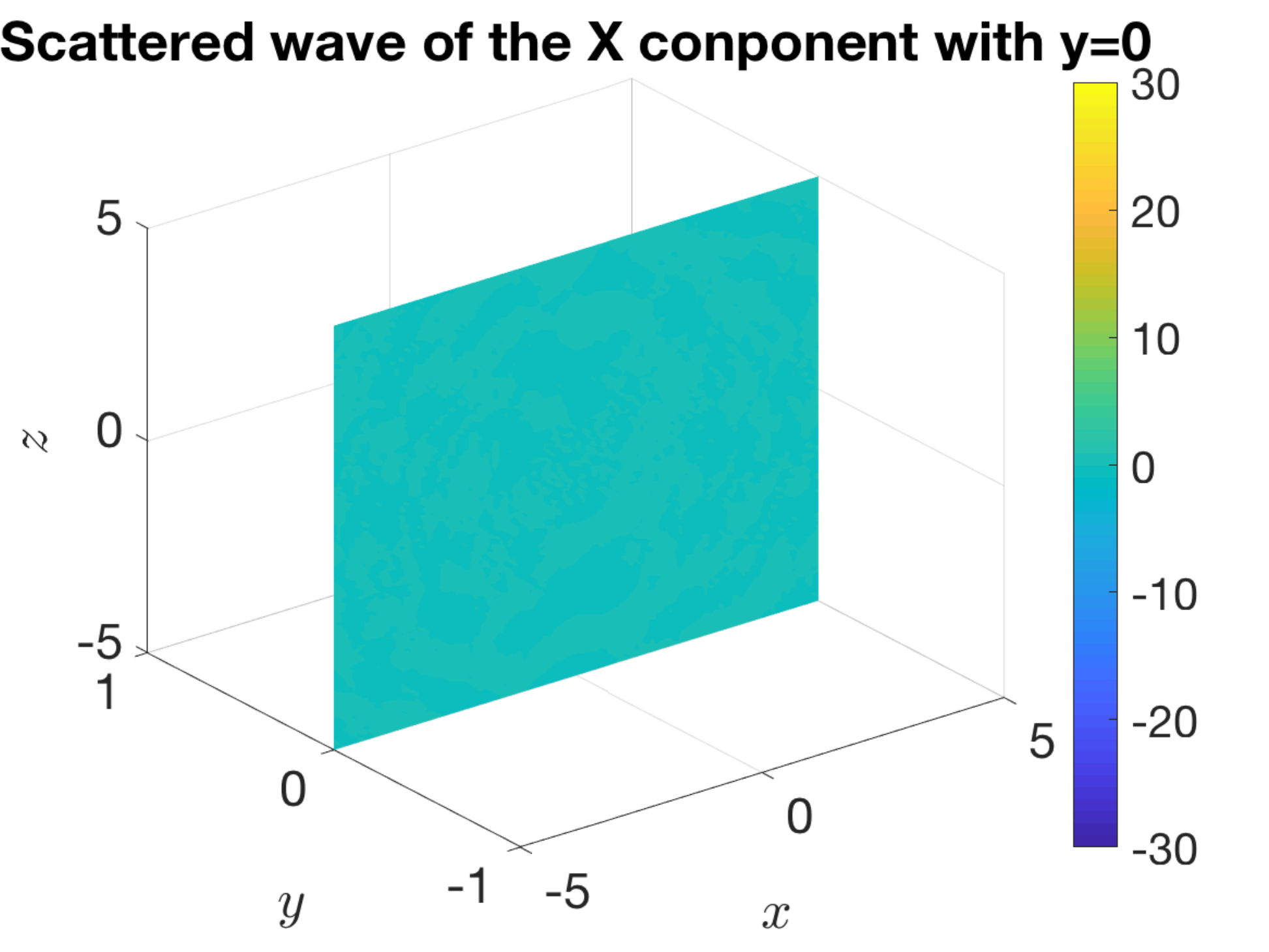}\\}
 {\includegraphics[width=4.4cm]{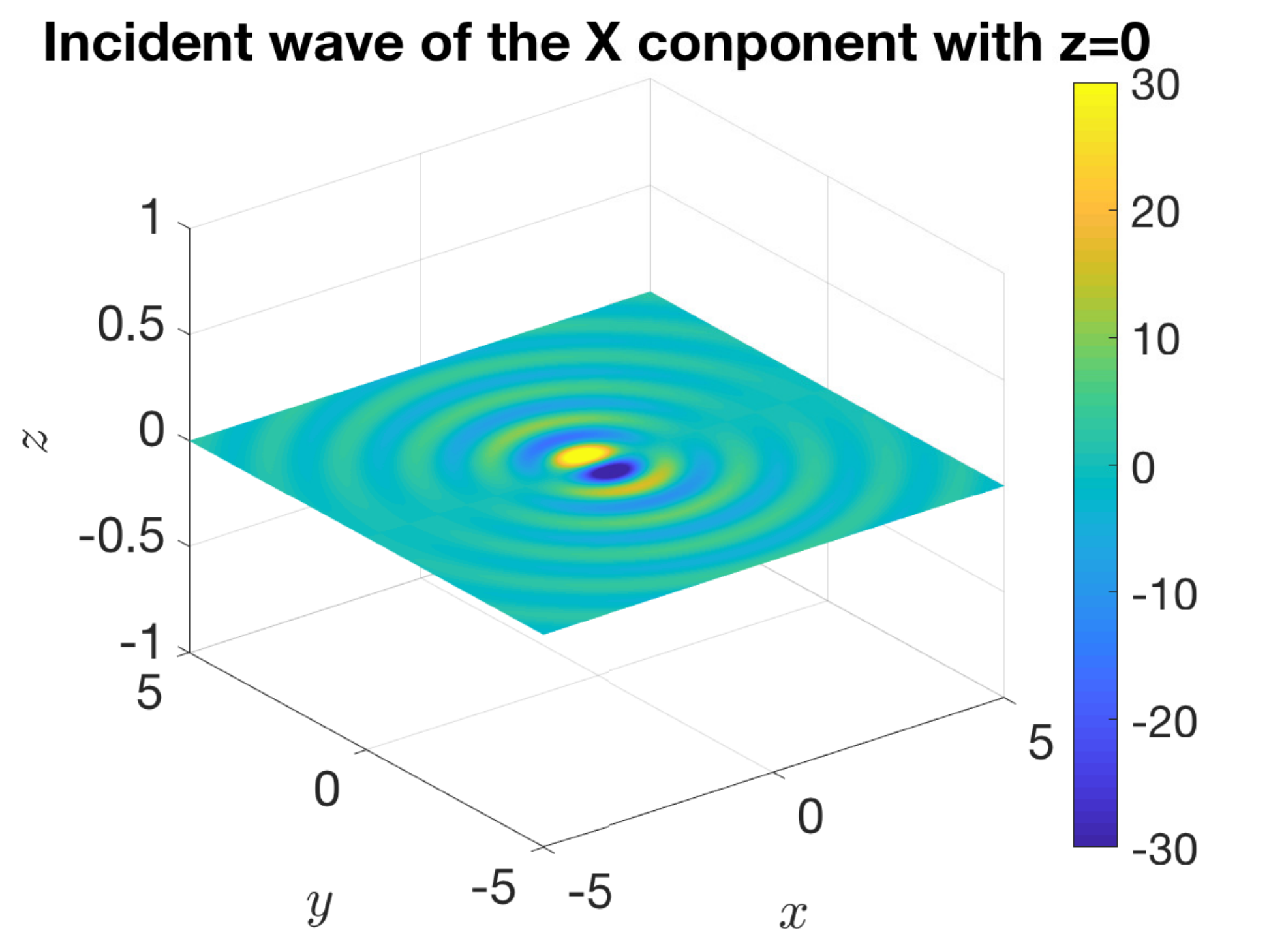}}
 {\includegraphics[width=4.4cm]{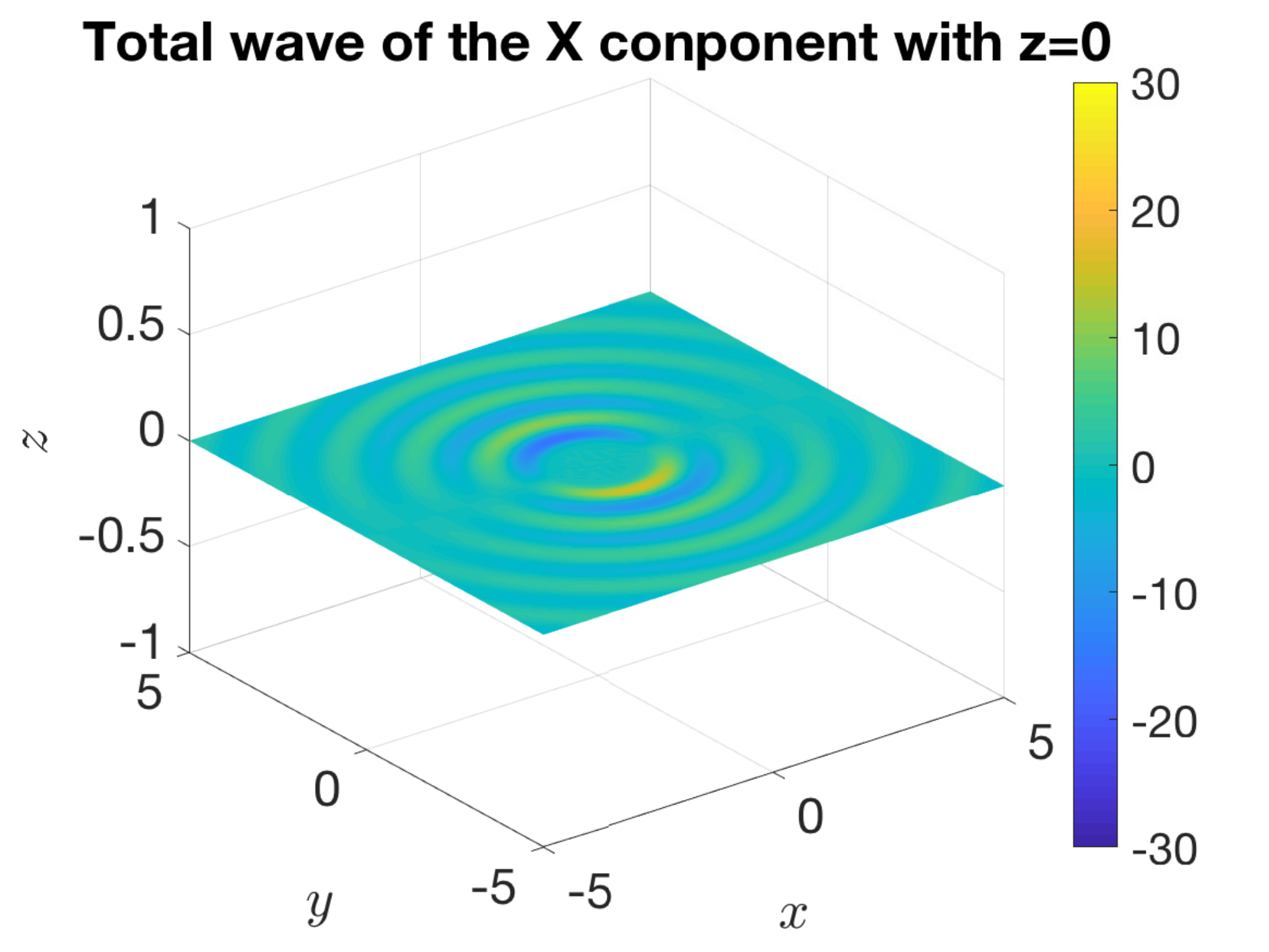}}
 {\includegraphics[width=4.4cm]{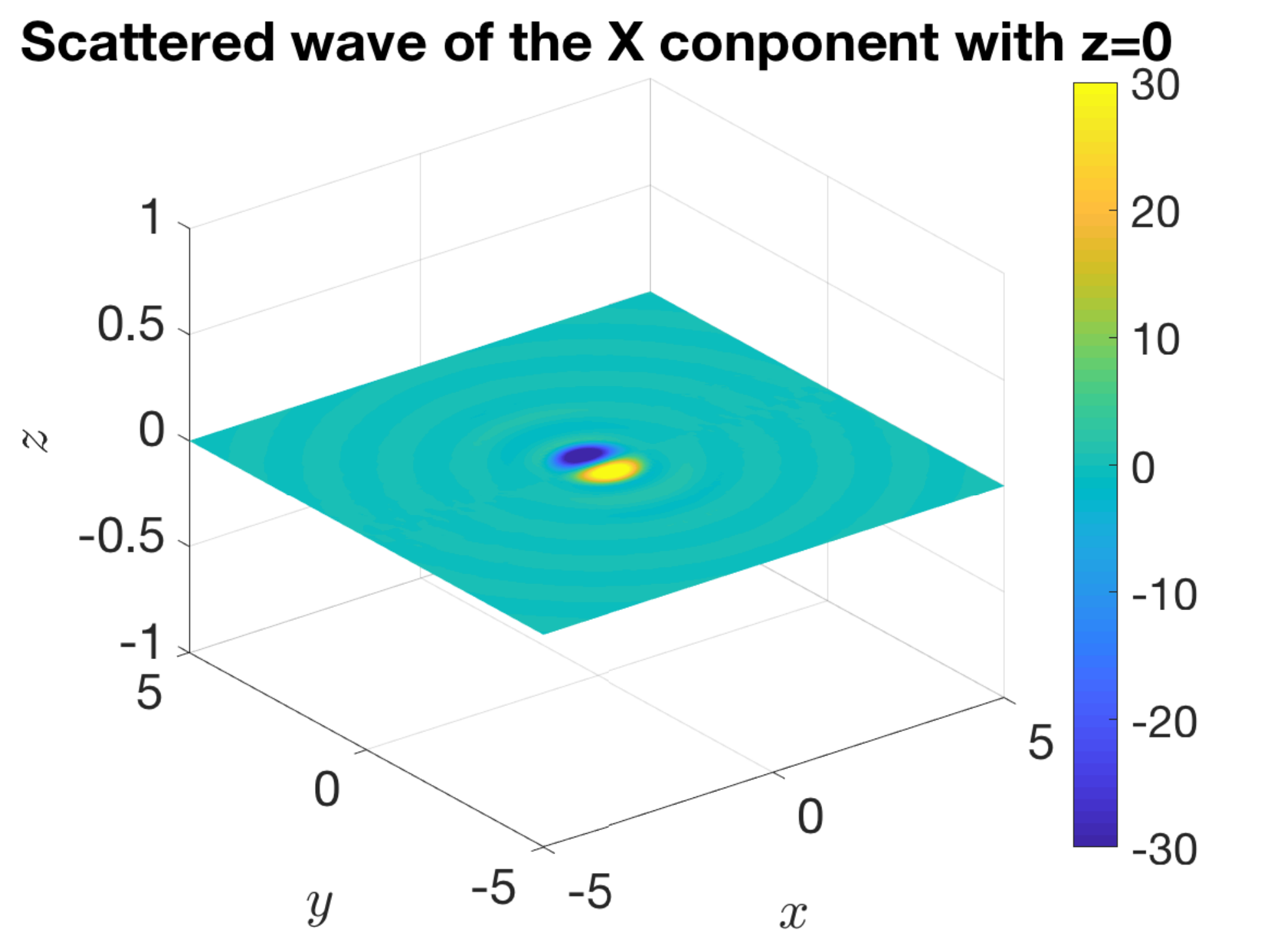}\\}
  \caption{Slice plotting of the $x$-components of the incident, total and scattered electric fields associated to the electromagnetic configuration described in \eqref{eq:gg1}, \eqref{eq:gg2} and \eqref{eq:cloaking_source}. }
\end{figure}

\begin{figure}\label{fig:reson_drude_y}
  \centering
  {\includegraphics[width=4.4cm]{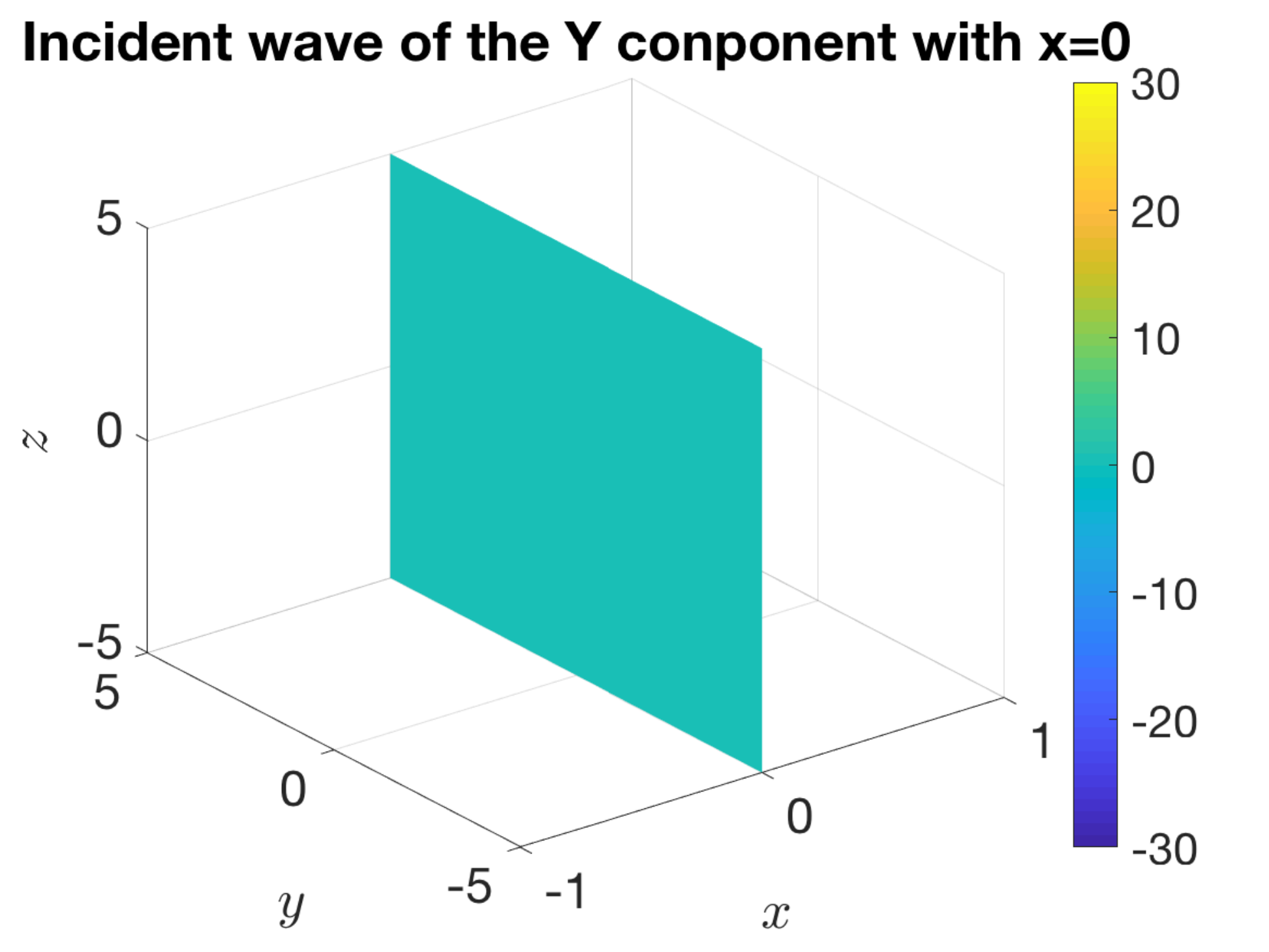}}
 {\includegraphics[width=4.4cm]{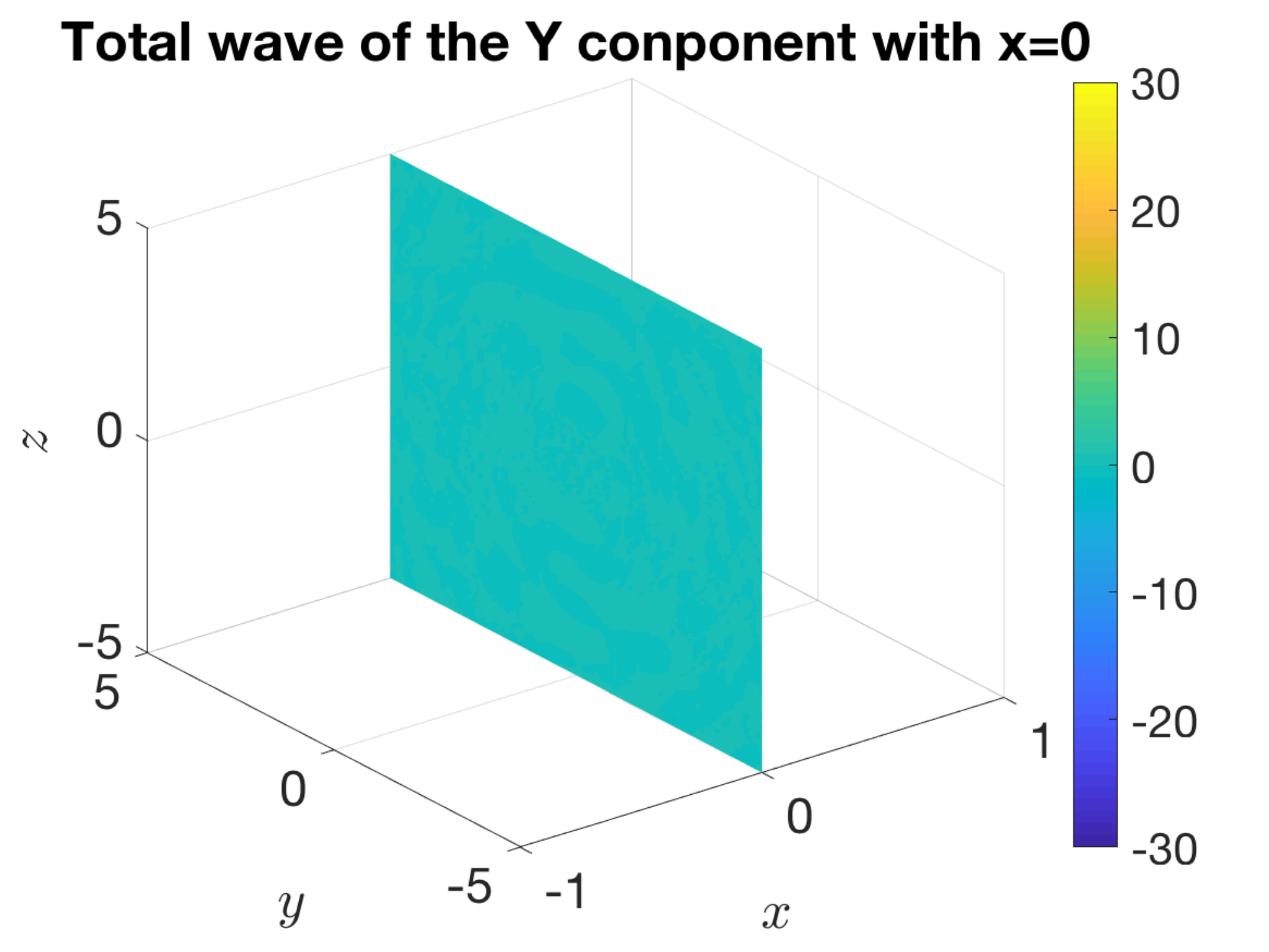}}
 {\includegraphics[width=4.4cm]{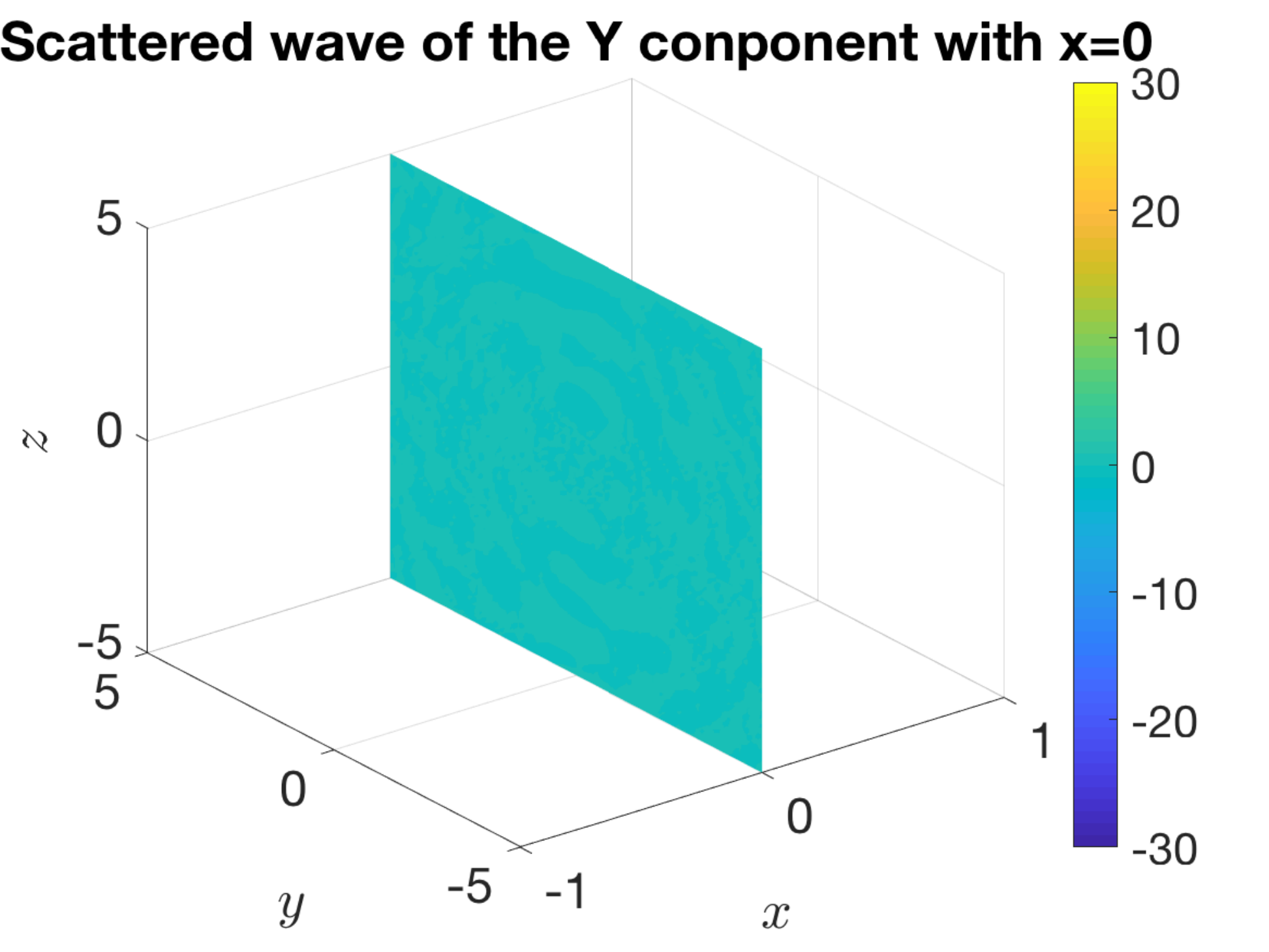}\\}
 {\includegraphics[width=4.4cm]{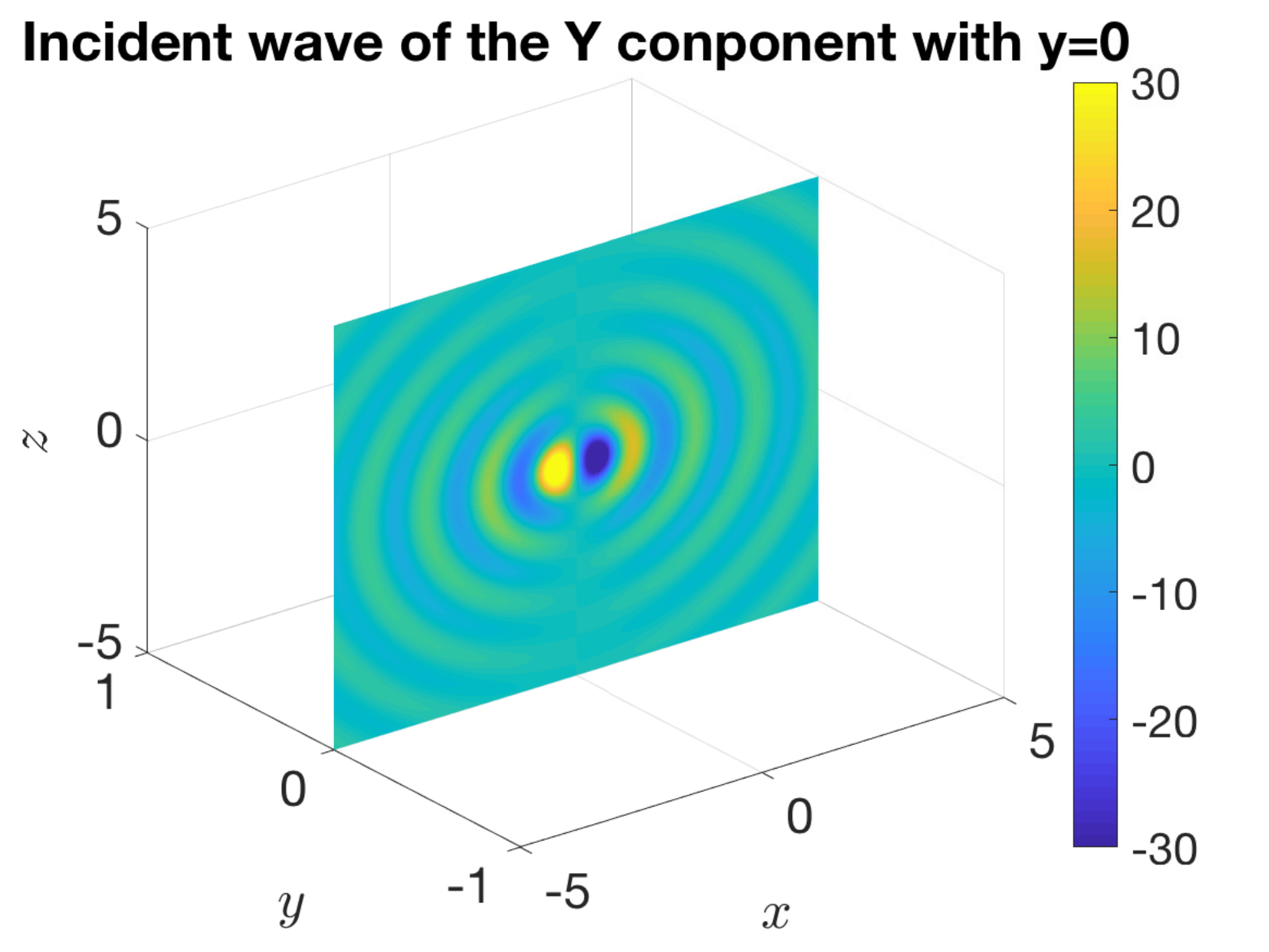}}
 {\includegraphics[width=4.4cm]{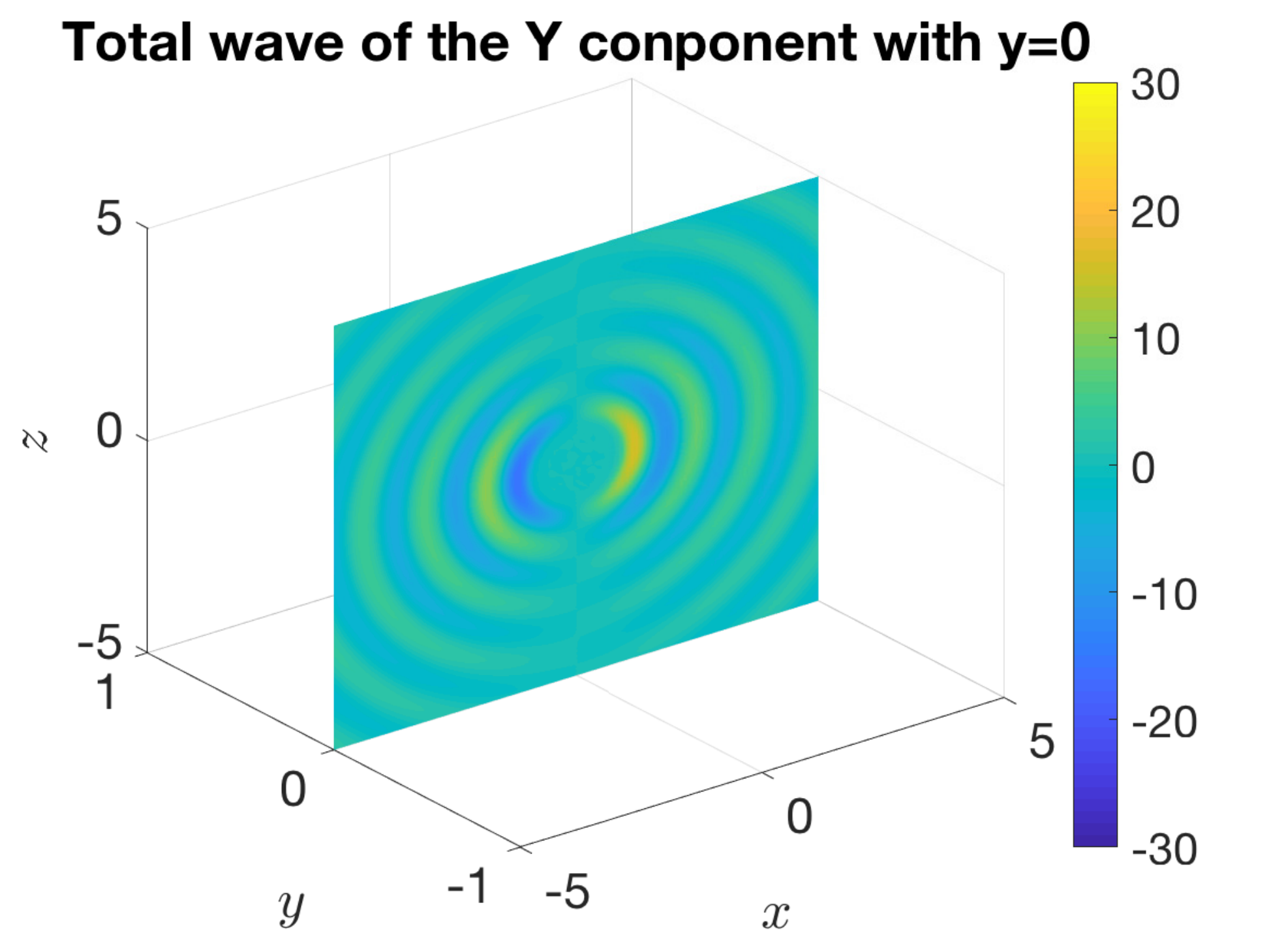}}
 {\includegraphics[width=4.4cm]{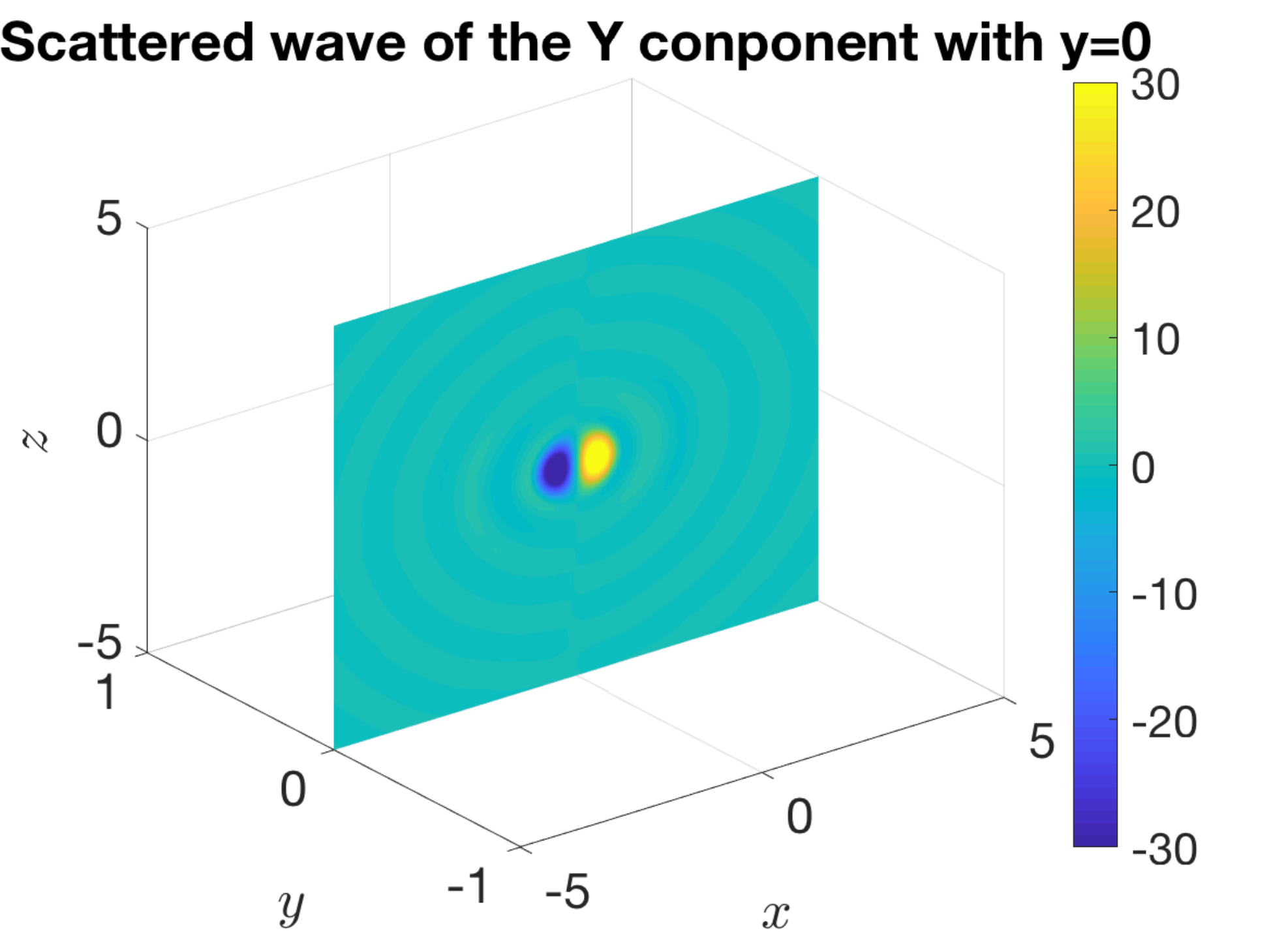}\\}
 {\includegraphics[width=4.4cm]{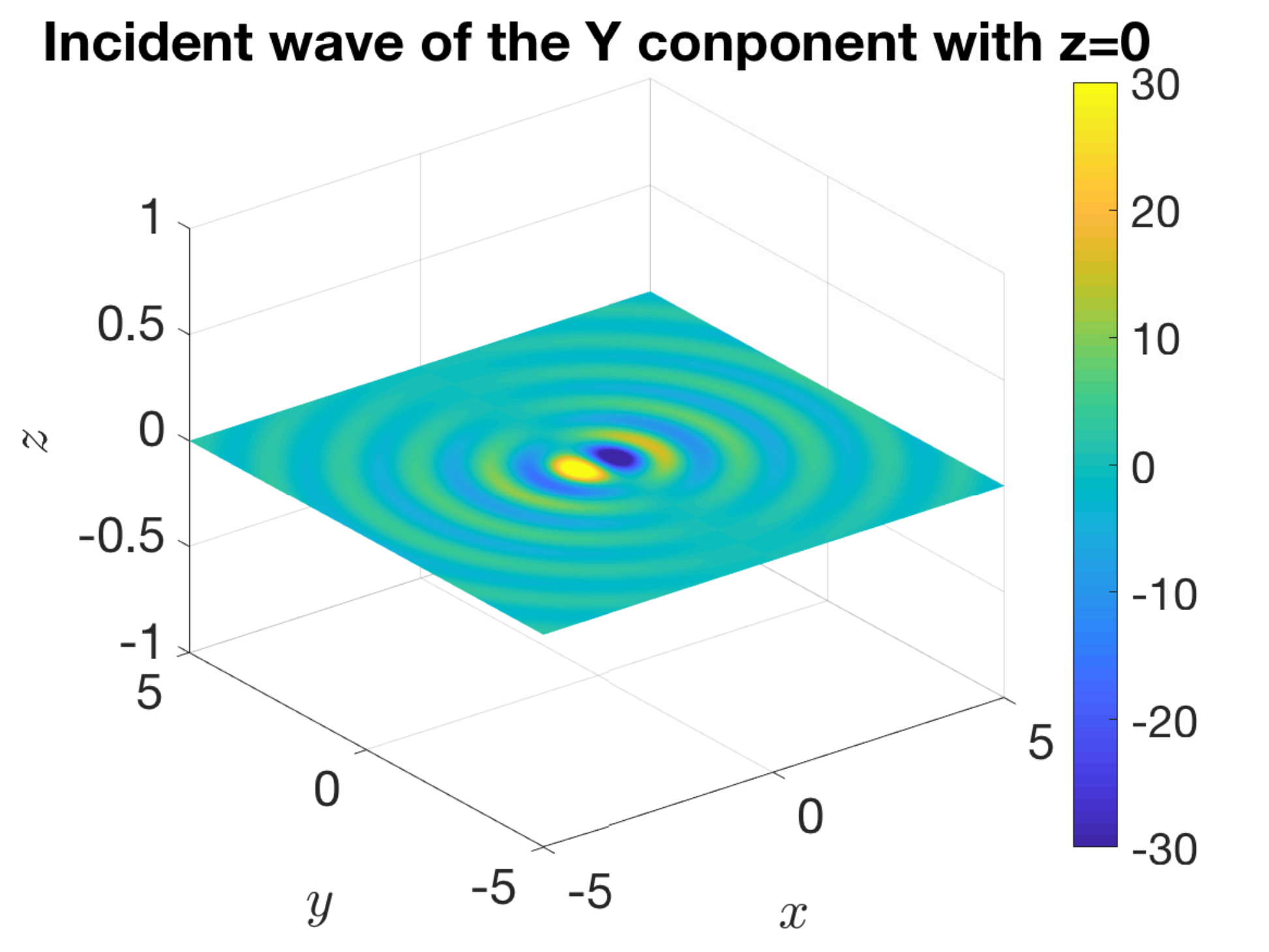}}
 {\includegraphics[width=4.4cm]{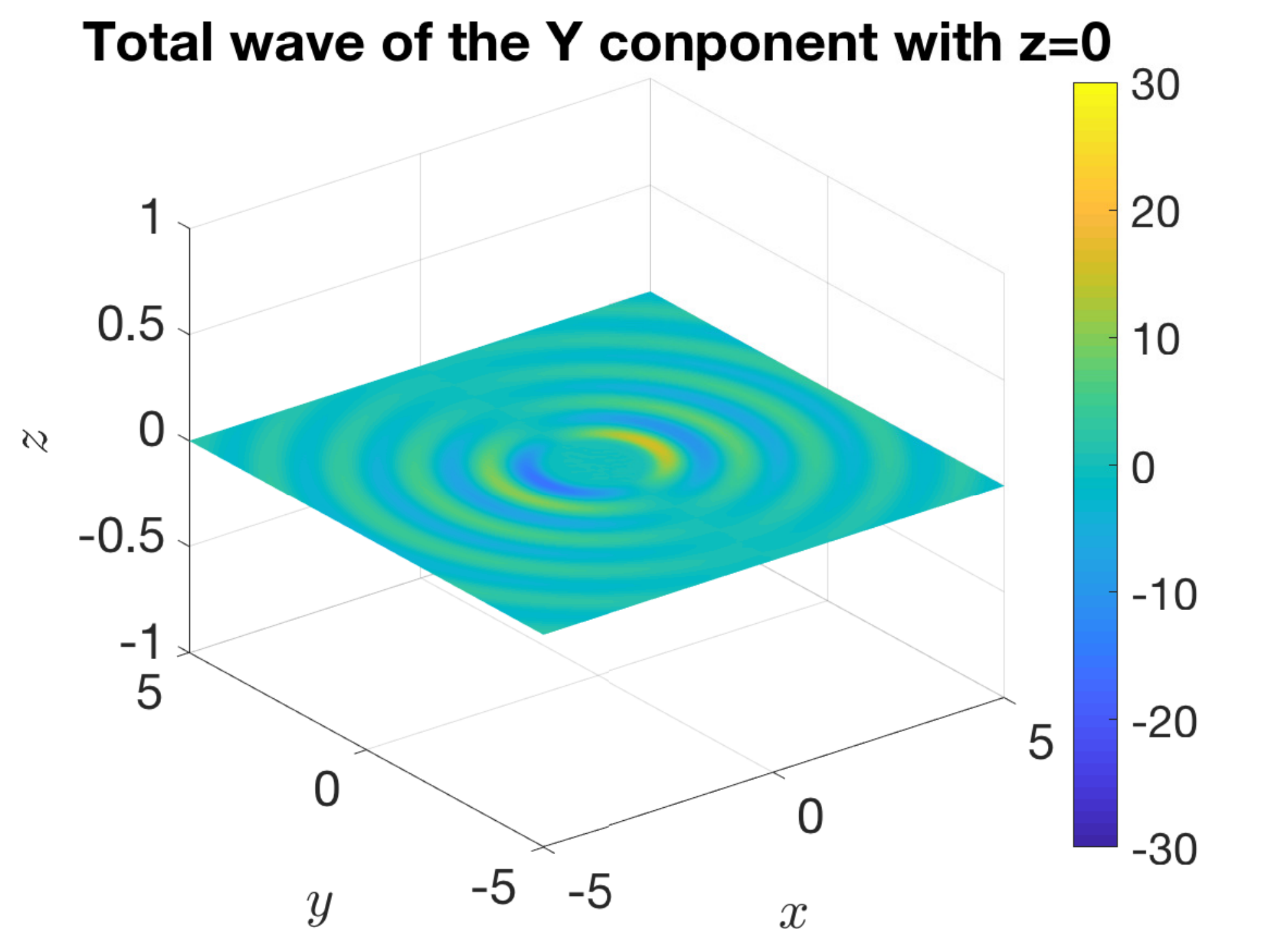}}
 {\includegraphics[width=4.4cm]{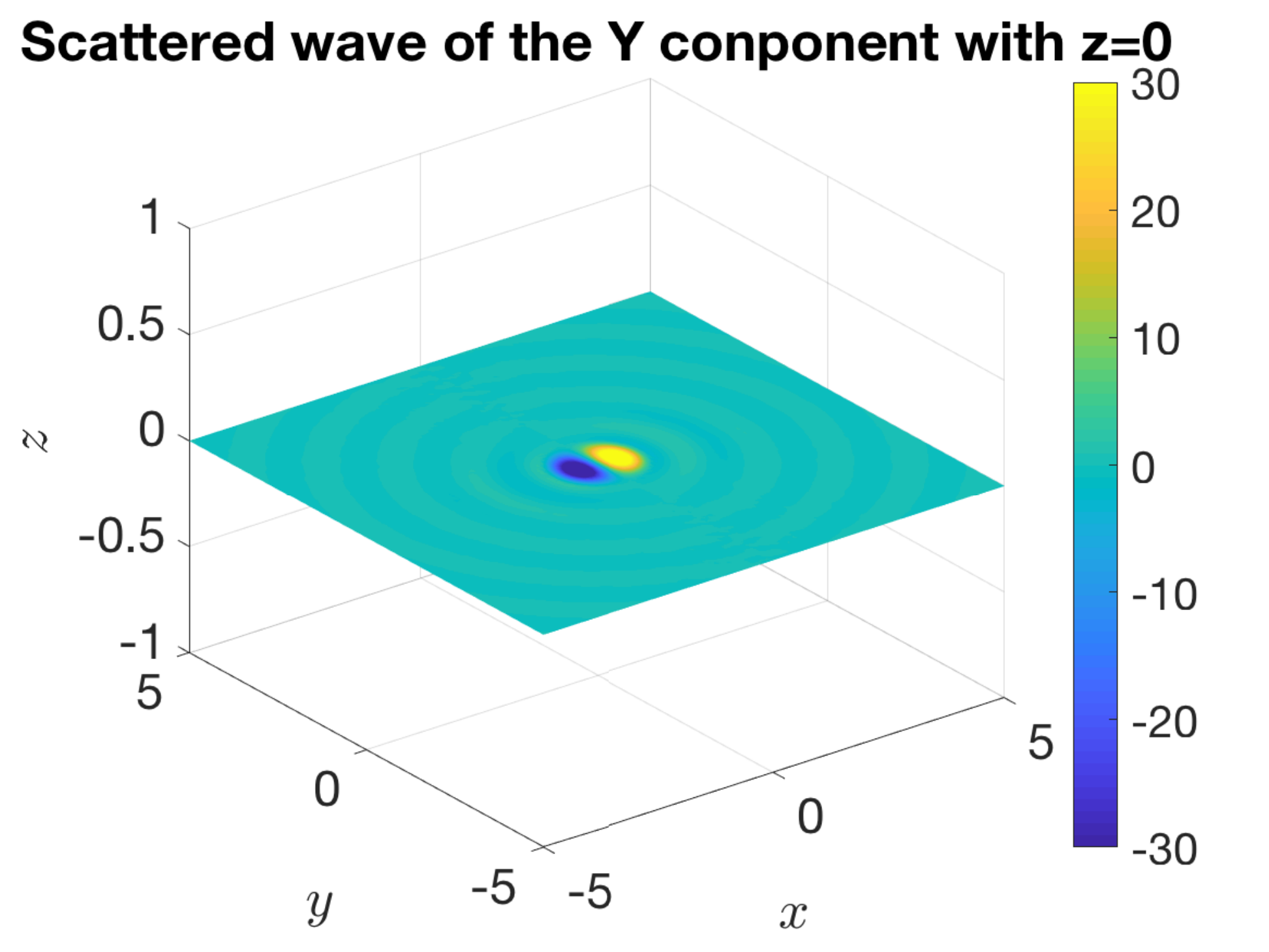}\\}
  \caption{Slice plotting of the $y$-components of the incident, total and scattered electric fields associated to the electromagnetic configuration described in \eqref{eq:gg1}, \eqref{eq:gg2} and \eqref{eq:cloaking_source}. }
\end{figure}

\section*{Acknowledgment}

The work of H. Liu was supported by the FRG and startup grants from Hong Kong Baptist University, and Hong Kong RGC General Research Funds, 12302017 and 12301218.

\end{document}